\numberwithin{equation}{section}   
\newcommand{\ds}{\displaystyle}
\newcommand{\tq}{\, \big| \, }
\renewcommand{\r}{\mathbb{R}}
\newcommand{\dfunk}{\varrho_f}
\newcommand{\dhilb}{\varrho_h}
\newcommand{\nosymbol}{ \  }
\DeclareMathOperator{\g}{g} 
\DeclareMathOperator{\RR}{R} 
\DeclareMathOperator{\Riem}{Riem} 
\DeclareMathOperator{\PP}{Proj} 
\DeclareMathOperator{\K}{K} 
\DeclareMathOperator{\Sc}{\mathcal{R}} 
\DeclareMathOperator{\Tau}{\mathcal{T}}
\renewenvironment{proof}[1][\proofname]{\par
  \pushQED{\qed}%
  \normalfont \topsep6\p@\@plus6\p@\relax
  \trivlist
  \itemindent0pt
  %\itemindent\normalparindent
  \item[\hskip\labelsep
        \scshape
    #1\@addpunct{.}]\ignorespaces
}{%
  \popQED\endtrivlist\@endpefalse
}
\definecolor{NoteColor}{rgb}{1,0,0}
\newtheorem*{theoremA}{\rm\bf Theorem} 
\newtheorem{theorem}{\rm\bf Theorem}[section]
\newtheorem{proposition}[theorem]{\rm\bf Proposition}
\newtheorem{lemma}[theorem]{\rm\bf Lemma}
\newtheorem{corollary}[theorem]{\rm\bf Corollary}
\theoremstyle{definition}
\newtheorem{definition}[theorem]{\rm\bf Definition}
\theoremstyle{remark}
\newtheorem{remark}[theorem]{\rm\bf Remark}
\newtheorem{example}[theorem]{\rm\bf Example} 
\newtheorem{examples}[theorem]{\rm\bf Examples}
\numberwithin{equation}{section}
\title{Funk and Hilbert geometries from the Finslerian Viewpoint}
\author{Marc Troyanov}
\address{M. Troyanov, Section de Math{\'e}matiques,  \'Ecole Polytechnique F{\'e}d\'erale de
Lausanne, \\ SMA--Station 8, 1015 Lausanne - Switzerland \ (marc.troyanov@epfl.ch)}
\date{\today}
\begin{document}

\maketitle

\begin{abstract}
In 1929, Paul Funk and Ludwig Berwald gave a characterization of Hilbert geometries from the Finslerian 
viewpoint. They  showed that a smooth Finsler metric in a  convex bounded  domain 
of $\r^n$ is the Hilbert geometry in that domain if and only if it is complete, if its geodesics are straight
lines and if its flag curvature is equal to $-1$. The goal of this chapter is to explain these notions  in details,
to illustrate the relation between Hilbert geometry, Finsler geometry and the calculus of variations, 
and to prove the Funk-Berwald characterization Theorem.
\end{abstract}

\tableofcontents

%__________________
\section{Introduction}

The hyperbolic space $\mathbb{H}^n$ is a complete, simply connected Riemannian manifold with constant sectional curvature $\K = -1$. It is unique up to isometry and several concrete models are available and well known. 
In particular, the \emph{Beltrami-Klein} model is a realization in the unit ball $\mathbb{B}^n \subset \r^n$ in which  the
hyperbolic lines are represented by the affine segments joining pairs of points on the boundary sphere $\partial\mathbb{B}^n$ and the distance between two points $p$ and $q$ in $\mathbb{B}^n$ is one half the logarithm of the cross ratio of $a,p$ with $b,q$, where $a$ and $b$ are the intersections of the line through $p$ and $q$ with  $\partial\mathbb{B}^n$.  
We refer to \cite{AcampoPapadopoulos} for  a historical discussion of this model.
In 1895, David Hilbert observed that the Beltrami-Klein construction defines a distance in any convex set $\mathcal{U} \subset \r^n$, and this metric still has the properties that affine segments are the shortest curves connecting two points. More precisely, if the point $x$ belongs to the segment $[p,q]$, then
\begin{equation}\label{eq.proj0}
  d(p,q) = d(p,x) + d(x,q).
\end{equation}
Metrics satisfying this property are said to be \emph{projective} and Hilbert's $\mathrm{IV^{th}}$ problem 
asks to classify and describe all projective metrics in a given domain $\mathcal{U} \subset \r^n$, see \cite{Papadopoulos-Hilbert}.
 
\smallskip
 
The Hilbert generalization of Klein's model to an arbitrary convex domain $\mathcal{U} \subset \r^n$ is no longer a 
Riemannian metric, but it is a  \emph{Finsler metric}. 
This means that the distance between two points $p$ and $q$ in $\mathcal{U}$ is the infimum of the length of all smooth curves
$\beta : [0,1] \to \mathcal{U}$ joining these two points, where the length is given by an integral of the type 
\begin{equation}\label{eq.Calcvar}
 \ell (\beta) = \int_0^1 F(\beta (t), \dot \beta (t)) dt.
\end{equation}
Here  $F : T\mathcal{U} \to \r$  is a sufficiently regular function called the \emph{Lagrangian}. Geometrical considerations lead us to 
assume that $F$ defines a norm on the tangent space at any point of $\mathcal{U}$;  in fact it is useful to  consider also non-symmetric 
and possibly degenerate norms (which we call \emph{weak Minkowski norms}, see \cite{PT2013a}).
 
  \medskip
 
General integrals of the type (\ref{eq.Calcvar}) are the very subject of the classical calculus 
of variations and Finsler geometry is really a daughter of that field.  The early
contributions (in the period 1900-1920) in Finsler geometry are due to mathematicians working in the calculus of variations\footnote{Although Bernhard Riemann already considered
this possible generalization of his geometry in his Habilitation Dissertation, he did not pursue the subject and Finsler geometry did not
emerge as a subject before the early twentieth century.}, in particular Bliss, Underhill, Landsberg, Hamel, Carath\'eodory and his student Finsler. Funk is the author of a quite famous book on the calculus of variations that contains a rich chapter on Finsler geometry \cite{Funk1970}. The name ``Finsler geometry''  has been proposed (somewhat improperly) by 
\'Elie Cartan in 1933. The 1950 article \cite{Busemann1950} by H. Busemann contains some interesting historical remarks
on the early development of the subject, and the 1959 book by H. Rund \cite{Rund59} gives a broad overview of the development of Finsler geometry  during its first 50 years; that book is  rich in references and historical comments.

\medskip

In 1908, A. Underhill  and G. Landsberg introduced a notion of curvature for two-dimensional Finsler manifolds that generalizes the classical Gauss curvature of Riemannian surfaces,
and in 1926  L. Berwald generalized this construction to higher dimensional Finsler spaces \cite{Berwald1926,Landsberg1908,Underhill}.
This invariant is nowadays called the \emph{flag curvature} and it generalizes the Riemannian 
sectional curvature. In 1929, Paul Funk proved that the flag curvature of a Hilbert geometry in a (smooth and strongly convex) domain $\mathcal{U} \subset \r^2$ is  constant $\K =-1$ and Berwald extended this result in all dimensions and refined Funk's investigation in several aspects, leading to the following  characterization of Hilbert geometry \cite{Berwald1929,Funk1929}:

\begin{theoremA}[Funk-Berwald]  \index{theorem!Funk-Berald}
Let $F$ be a smooth and strongly convex Finsler metric defined in a bounded convex domain  
$\mathcal{U} \subset \r^n$. Suppose that 
$F$ is  complete with  constant flag curvature $\K =-1$ and that 
the associated distance $d$ satisfies (\ref{eq.proj0}), then $d$ is the Hilbert metric in $\mathcal{U}$.
\end{theoremA}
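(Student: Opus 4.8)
The plan is to turn the three hypotheses into differential identities for $F$ and then to integrate them along the straight geodesics. I use smoothness and strong convexity so that the geodesic spray and the flag curvature are defined. For a function $u(x,y)$ on $T\mathcal U$ that is homogeneous of degree one in $y$, write $\mathcal D u(x,y)=\sum_{k}y^{k}\,\partial_{x^{k}}u(x,y)$, the derivative of $u$ with respect to the base point in the direction $y$, so that $\mathcal D u(x,y)=\frac{d}{dt}\big|_{t=0}u(x+ty,y)$. The projective hypothesis (\ref{eq.proj0}) says that affine segments are geodesics, i.e. that $F$ is projectively flat; hence its spray is $G^{i}=P\,y^{i}$ with projective factor $P=\mathcal D F/(2F)$, which gives the first identity $\mathcal D F=2FP$. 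Feeding projective flatness into the flag-curvature formula of a projectively flat metric, $\K=(P^{2}-\mathcal D P)/F^{2}$, the hypothesis $\K=-1$ becomes the second identity $\mathcal D P=P^{2}+F^{2}$.

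The crucial step is then purely algebraic. Setting $a:=F+P$ and $b:=P-F$, the two identities give $\mathcal D a=\mathcal D F+\mathcal D P=2FP+P^{2}+F^{2}=(F+P)^{2}=a^{2}$ and likewise $\mathcal D b=(P-F)^{2}=b^{2}$, so that both $a$ and $b$ solve the single equation $\mathcal D u=u^{2}$. Along a fixed affine line $t\mapsto x+ty$ (the direction $y$ being frozen) this reduces to the scalar ODE $g'(t)=g(t)^{2}$, whose solutions are $g(t)=1/(C-t)$. At this point I would recall that the Funk metric $F_{+}$ of $\mathcal U$ is exactly the solution of $\mathcal D u=u^{2}$ blowing up at the forward boundary, namely $F_{+}(x,y)=1/\tau_{+}$ with $x+\tau_{+}y\in\partial\mathcal U$, that the reverse Funk metric $F_{-}(x,y)=F_{+}(x,-y)$ blows up at the backward boundary, and that the Hilbert metric is the symmetrization $F_{H}=\tfrac12(F_{+}+F_{-})$.

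It remains to pin down the two integration constants, and this is where completeness is essential. Fix an interior point $x$ and a direction $y$, and let $(t_{-},t_{+})$ be the maximal interval with $x+ty\in\mathcal U$; boundedness and convexity give $x+t_{\pm}y\in\partial\mathcal U$. On this interval $F$ and $P$ are smooth, so $a=1/(C_{+}-t)$ and $b=1/(C_{-}-t)$ remain finite, forcing $C_{\pm}\notin(t_{-},t_{+})$. A function $1/(C-t)$ tends to $+\infty$ only as $t\to C^{-}$ and to $-\infty$ only as $t\to C^{+}$; on $(t_{-},t_{+})$ it can therefore blow up to $+\infty$ only at the right endpoint (when $C=t_{+}$) and to $-\infty$ only at the left endpoint (when $C=t_{-}$). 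Since $F=\tfrac12(a-b)>0$ we have $a>b$, while forward completeness forces $\int^{t_{+}}F\,dt=+\infty$ and hence $F\to+\infty$ as $t\to t_{+}$; the only option consistent with $a>b$ is $C_{+}=t_{+}$, i.e. $a=F_{+}$ (the competing $C_{-}=t_{+}$ would send $b\to+\infty$ and $F\to-\infty$). The symmetric argument at $t_{-}$, using backward completeness, gives $C_{-}=t_{-}$, i.e. $b=-F_{-}$. Therefore $F=\tfrac12(a-b)=\tfrac12(F_{+}+F_{-})=F_{H}$ along every line, so $F$ is the Hilbert metric of $\mathcal U$ and $d$ its distance.

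I expect two genuine difficulties. The first is establishing (or cleanly quoting) the two structural formulas $\mathcal D F=2FP$ and $\K=(P^{2}-\mathcal D P)/F^{2}$ for projectively flat Finsler metrics: the flag-curvature computation is where all the Finslerian subtlety lives, and it is the step that really uses strong convexity. The second is the completeness bookkeeping, where one must exclude the mixed and degenerate possibilities (for instance $a$ finite with $b\to-\infty$ at $t_{+}$, or $a\equiv 0$) and thereby recognize that two-sided completeness is exactly what the statement needs: dropping it leaves the forward-complete but non-Hilbert solutions $F=\tfrac12\bigl(\tfrac{1}{t_{+}-t}-\tfrac{1}{C-t}\bigr)$ with $C>t_{+}$, which are still projectively flat with $\K=-1$.
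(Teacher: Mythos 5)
Your proof is correct, and it takes a genuinely different route from the paper's own argument. The paper (Theorem \ref{th.CharHilb}) proceeds by uniqueness: having already verified that the Hilbert metric is complete, projectively flat and of flag curvature $-1$, it shows that any two such metrics coincide by comparing their unit-speed geodesics $\beta_i(s)=p+\varphi_i(s)\xi$; Proposition \ref{Prop.Funk} converts $\K\equiv-1$ into the constant Schwarzian $\{\varphi_i,s\}=-2$, Lemma \ref{lem.schwarzian} then forces $\varphi_2$ to be a M\"obius transform of $\varphi_1$, and the asymptotic values $\varphi_i(+\infty)=1/F_f(p,\xi)$, $\varphi_i(-\infty)=-1/F_f^*(p,\xi)$ supplied by two-sided completeness kill the remaining constants. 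You instead work with the affine parametrization and the pair $(F,P)$ directly: your two identities are exactly the paper's (\ref{eq.2FP}) and (\ref{eq.Kprojflat}) (Lemma \ref{lem.PF} and Corollary \ref{cor.berwald1}), and your decomposition $a=F+P$, $b=P-F$ decouples them into the two Riccati equations $y^k\partial a/\partial x^k=a^2$, $y^k\partial b/\partial x^k=b^2$, which you solve along lines and normalize by completeness. Your identification of the blow-up solution with the Funk metric is legitimate: the equation $y^k\partial u/\partial x^k=u^2$ is (\ref{eq.FFxy}) contracted with $y$, equivalently the infinitesimal form of the reproducing formula (\ref{eq.repr}), and your conclusion $a=F_f$, $b=-F_f^*$ matches the paper's computation of the Hilbert projective factor $P_h=\tfrac12(F_f-F_f^*)$, which gives precisely $F_h+P_h=F_f$ and $P_h-F_h=-F_f^*$. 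The two arguments are secretly the same ODE analysis (the Schwarzian equation linearizes to $\ddot w=w$, whose two exponential solutions correspond to your $a$ and $b$, and the paper's M\"obius constants play the role of your $C_\pm$), but they buy different things: the paper's version is a short uniqueness statement that leans on the earlier verification that Hilbert geometry satisfies the hypotheses, whereas yours is constructive --- it recovers the formula $F=\tfrac12(F_f+F_f^*)$ without assuming any solution is known in advance, it shows exactly where each half of completeness enters (forward completeness pins $C_+=t_+$, backward pins $C_-=t_-$), and your closing family $\tfrac12\bigl(1/(t_+-t)-1/(C-t)\bigr)$ with $C>t_+$ (half the Funk metric of $\mathcal{U}$ plus half the reverse Funk metric of a larger convex domain) makes concrete the paper's introductory remark that completeness cannot be dropped. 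Your analytic glosses --- that forward completeness forces $\int^{t_+}F\,dt=\infty$ and hence a pole at the exit parameter --- are at the same level of detail as the corresponding step in the paper's proof (that completeness forces $\beta_i(\pm\infty)\in\partial\mathcal{U}$), so I do not count them as gaps.
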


The completeness hypothesis means that every geodesic segment can be indefinitely extended in both directions, in other words
every geodesic segment is contained in a line that is isometric to the full real real line $\r$. 

In fact, Funk and Berwald only assumed the Finsler metric to be reversible, that is $F(p,-\xi) = F(p,\xi)$. But they also needed the (unstated)
hypothesis that $F$ is  \emph{forward complete}, meaning that  every oriented geodesic segment can be  extended as a ray. 
Observe that reversibility together with forward completeness implies the completeness of $F$, and therefore the way we characterize the Hilbert metrics in the above theorem is
slightly stronger than the Funk-Berwald original statement. 
Note that completeness is necessary:  it is possible to locally construct reversible (incomplete) Finsler metrics for which the condition  
(\ref{eq.proj0}) holds and which are not restrictions of Hilbert metrics.

\medskip

The Berwald-Funk Theorem is  quite remarkable, and it  has been an  important landmark in Finsler geometry.
Our goal in this chapter is to develop all the necessary concepts and tools to explain this statement precisely. The 
actual proof is given in  Section \ref{sec.charH}.

 \medskip
 
The rest of the chapter is organized as follows. In Section 2 we explain what a Finsler manifold is and  give some basic definitions in the subject with a few elementary examples. In Section 3 we introduce a very natural
example of Finsler structure in a convex domain, discovered by Funk, and that we called the \emph{tautological Finsler structure}. We also compute the distances and the geodesics for the tautological structure. In Section 4 we introduce  Hilbert's Finsler structure as the symmetrization of the tautological structure and compute its distances and geodesics.

In Section 5, we introduce the \emph{fundamental tensor} $g_{ij}$ of a Finsler metric (assuming some smoothness and strong convexity hypothesis). 
In Section 6 we compute the geodesic equations and we introduce the notion of \emph{spray} and the exponential map. In Section 7 we discuss various characterizations of \emph{projectively flat} Finsler metrics, that is Finsler metrics for which the affine lines are geodesics. In Section 8 we introduce the Hilbert differential form and the notion of Hamel potential, which is a tool to compute distances in a general projectively flat Finsler space.

In Section 9 we introduce the curvature of a Finsler manifold based on the classical  Riemannian curvature of some associated (osculating) Riemannian metric.
The curvature of projectively flat Finsler manifolds is computed in Sections 10 and 11. The characterization of
Hilbert geometries is given in Section 12 and the chapter ends with two appendices: one on further developments of the subject and one on the Schwarzian derivative. 

\medskip

We tried to make this chapter as  self-contained as possible.
The material we present here is essentially built on Berwald's  paper  \cite{Berwald1929}, but we have also 
used a number of more recent sources. 
In particular we found the books \cite{Shen2001a,Shen2001b}
by Z. Shen and   \cite{ChernShen}  by S.-S. Chern and Z. Chen quite useful.

%_______________________________ 
\section{Finsler manifolds}  

We start with  the main definition of this chapter:

\begin{definition}  \index{Finsler structure}
 \textbf{(A)} A \emph{weak Finsler structure}\index{Finsler structure}\index{weak Finsler structure} on a smooth manifold $M$ is given by a  lower semi-continuous 
 function $F:TM\to [0,\infty]$ such that for every point $x\in M$ the  restriction $F_x = F_{|T_xM}$ is a  weak Minkowski norm,
 that is,  it satisfies the following properties:
 \begin{enumerate}[i.)]
 	\item $F(x, \xi_1+\xi_2) \leq F(\xi_1)+ F(\xi_2)$,
	\item $F(x,\lambda \xi)=\lambda F(\xi)$  for all $\lambda\geq 0$,
\end{enumerate}
 for any $x\in M$ and $\xi, \eta \in T_xM$.
 
 \smallskip
 
\textbf{(B) } If $F:TM\to [0,\infty)$ is finite and continuous  and if $F(x,\xi) >0$ for any $\xi \neq 0$ in the tangent space $T_xM$,
then one says that $F$ is a \emph{Finsler structure} on $M$.
\end{definition}

\medskip

We shall mostly be interested in Finsler structures, but extending the theory to the case of weak Finsler structures can be 
useful in some arguments. Another situation where weak Finsler structures appear naturally is the field of \emph{sub-Finsler}
geometry.

\medskip
 
The function $F$ itself is called the \emph{Lagrangian} of the Finsler structure, 
it is also called the (weak) \emph{metric}, since it is used to measure the length of a tangent vector. 

The weak Finsler metric is called  \emph{reversible} if $F(x, \cdot )$  is actually a norm, that is if it satisfies
 $$F(x,-\xi) = F(x,\xi)$$
  for all point $x$ and any tangent vector $\xi  \in T_xM$. 
 The Finsler metric is said to be \emph{of class $C^k$} if  the restriction of $F$ to the slit tangent bundle
$TM^0 = \{(x,\xi) \in TM  \tq \xi \neq 0\}\subset TM$ is a function of class  $C^k$. If it is $C^{\infty}$ on $TM^0$,
then one says $F$ is smooth.
 
\medskip

Some of the most elementary examples of Finsler structures are

\begin{examples}
\begin{enumerate}[i.)]
  \item A weak Minkowski norm $F_0 : \r^n \to \r$ defines a weak  Finsler structure $F$ on $M = \r^n$ by 
  $$F(x,\xi) = F_0(\xi).$$
  See \cite{PT2013a} for a discussion of weak Minkowski norms.
  This Finsler structure is constant in $x$ and is thus  translation invariant. Such structures are considered to be the
  flat spaces of Finsler geometry.
  \item A Riemannian metric $\g$ on the manifold $M$ defines a Finsler structure on $M$ by $F(x,\xi)= \sqrt{\g_x(\xi,\xi)}$.
   \item If $\g$ is  Riemannian metric on $M$ and $\theta \in \Omega^1(M)$ is a $1$-form whose 
   $g$-norm is everywhere smaller than $1$, then one can define a smooth Finsler structure on $M$ by
  $$
    F(x,\xi) = \sqrt{\g_x(\xi,\xi)}  +  \theta_x(\xi).
  $$
  Such structures are called 
  \emph{Randers metrics}.\index{Randers metrics}
  \item If $\theta$ is an arbitrary $1$-form on the Riemannian manifold $(M,g)$, then one can define
  two new Finsler structures on $M$ by
  $$
    F_1(x,\xi) = \sqrt{\g_x(\xi,\xi)}  +  |\theta_x(\xi)|, \qquad 
     F_2(x,\xi) = \sqrt{\g_x(\xi,\xi)}  + \max \{ 0, \theta_x(\xi)\}.
  $$
 \item If $F$ is  Finsler structure on $M$, then the \emph{reverse Finsler structure} $F^*:TM\to [0,\infty)$ 
 is defined by
 $$
   F^*(x,\xi) = F(x, -\xi).
 $$
\end{enumerate}
\end{examples}

Additional examples will be given below.

\medskip

A number of concepts from Riemannian geometry naturally extend to Finsler geometry, in particular one defines the \emph{length} of a  smooth curve $ \beta  : [0,1] \to M$  as 
$$\ell (\beta ) = \int_0^1 F(\beta (s) , \dot \beta (s)) dt.$$
 We  then define the \emph{distance} $d_F(x,y)$ between two points $p$ and $q$ to be  the infimum of the
length of all smooth curves  $\beta:  [0,1] \to M $ joining these two points (that is $\beta(0)=p$ and  $\beta(1)=q$). 
This distance satisfies the axioms of a weak metric, see \cite{PT2013a}. 
Together with the distance comes the notion of completeness. 
 
\begin{definition}\label{def.completeness}
 The Finsler Manifold $(M,F)$ is said to be \emph{forward complete} if every forward Cauchy sequence converges. A sequence $\{ x_i\}\subseteq M$ is \emph{forward Cauchy} if for any 
 $ \varepsilon >0$, there exists an integer $N$ such that $d(x_i,x_{i+k}) <  \varepsilon$ for any $i \geq N$ and $k \geq 0$. 
We similarly define \emph{backward Cauchy sequences} by the condition $d(x_{i+k},x_i) <  \varepsilon$, and the corresponding  notion of \emph{backward completeness}. The Finsler Manifold $(M,F)$ is  \emph{complete} if it is
both backward and forward complete.
\index{forward complete Finsler manifold}
\index{backward complete Finsler manifold}
\index{complete Finsler manifold}
\end{definition}

\medskip

A weak Finsler structure on the manifold $M$ can also be seen as a ``field of convex sets'' sitting in the tangent bundle $TM$ of that manifold.
Indeed, given a Finsler structure $F$ on the manifold $M$, we define its \emph{domain} $\mathcal{D}_F \subset TM$ to be
the set of all vectors with finite $F$-norm. The  \emph{unit domain} $\Omega \subset \mathcal{D}_F$
is the bundle of all tangent unit balls
$$
 \Omega = \{(x,\xi) \in TM \tq   F(x,\xi) < 1 \}.
$$
The unit domain  contains the zero section in $TM$ and its restriction 
$\Omega_x = \Omega \cap T_xM$ to each tangent space is a bounded and convex set. It is called the \emph{tangent unit ball}
of $F$ at $x$, while its boundary 
$$\mathcal{I}_x = \{\xi \in T_xM \tq   F(x,\xi) = 1 \}  \subset T_xM$$
 is called the \emph{indicatrix}.\index{indicatrix} of $F$ at $x$.
We know from \cite[Theorem 3.12]{PT2013a} that the Lagrangian  $F : TM \to \r$  can be recovered from $\Omega \subset TM$ via the formula
\begin{equation}\label{eq.DefLag}
  F(x,\xi) =  \inf \{ t > 0  \tq  \tfrac{1}{t}\xi \in \Omega \}.
\end{equation}

\smallskip

Sometimes, a weak Finsler structure is defined by specifying its unit domain, and the Lagrangian is then 
obtain from (\ref{eq.DefLag}). Let us  give two elementary examples; more can be found in \cite{PT1}:

\begin{example} Given  a bounded open set $\Omega_0 \subset \r^n$ that contains the origin, 
one naturally defines a Finsler structure on $\r^n$
by parallel transporting  $\Omega_0$. That is, $\Omega \subset T\r^n = \r^n \times \r^n$ is defined as
$$
 \Omega = \{ (x, \xi) \in  \r^n \times \r^n \tq  \xi \in \Omega_0 \}.
$$
The space $\r^n$ equipped with this Finsler structure is characterized by the property  that its Lagrangian $F$  is 
invariant with respect to the  translations of $\mathbb{R}^n$, i.e. $F$   is independent of the point $x$:
$$
  F(x,\xi) = F_0(\xi).
$$
Such a Finsler  structure on $\r^n$ is of course  the same thing as  a Minkowski norm, see  \cite{PT2013a} 
\end{example}
 
\begin{example} \label{ex:zerm}  
Let $F$ be an arbitrary weak Finsler structure on a manifold $M$ with unit domain $\Omega \subset TM$.
If  $Z : M \to TM$ is a continuous  vector field such that
$F(x,Z(x)) < 1$ for any point $x$ (equivalently $Z(M) \subset \Omega$), then a new weak Finsler structure can be defined as 
\begin{equation}\label{eq.zermelo0}
 \Omega_Z = \{(x,\xi) \in TM \tq \xi \in (\Omega _x - Z(x)) \}.
\end{equation}
Here $\Omega _x - Z(x)$ is the translate of $\Omega _x \subset T_xM$ by the vector $-Z(x)$.
The corresponding Lagrangian is given by
\begin{equation}\label{eq.zermelo1}
 F_Z(x, \xi)  = \inf \{ t > 0  \tq \tfrac{1}{t} \xi \in  (\Omega_x - Z(x))\},
\end{equation}
and for $\xi \neq 0$, it is computable from the identity
\begin{equation}\label{eq.zermelo2}
  F\left(x, \frac{\xi}{F_Z(x, \xi)} + Z(x) \right) = 1.
\end{equation}
\end{example} 
This weak Finsler structure $F_Z$  is called the \emph{Zermelo transform} of 
$F$ with respect to the vector Field $Z$.

\medskip

We end this section with a word on Berwald spaces. Recall first that in  Riemannian geometry, the 
tangent space of the manifold at every point is isometric to a fixed
model, which is a Euclidean space. Furthermore, using the Levi-Civita connexion
one  defines  parallel transport along any piecewise $C^1$ curve, and this parallel transport induces an isometry between the tangent spaces
at any point along the curve. In a general Finsler manifold, neither of these facts holds. This motivates the following definition:

\begin{definition}\label{def.berwald} 
A weak Finsler  manifold $(M,F)$ is  said to be  \emph{Berwald}\index{Berwald metric}  if there exists a  torsion free linear  connection $\nabla$ (called an  \emph{associated connection}) on $M$ whose   associated parallel transport  preserves the Lagrangian $F$. That is, if $\gamma:[0,1]\to M$ is a smooth path connecting the point $x= \gamma(0)$ to $y= \gamma(1)$  and $P_{\gamma} : T_xM \to T_yM$  is the associated $\nabla$-parallel transport, then   
$$
  F(y, P_{\gamma}(\xi)) = F(x, \xi)
$$
for any $\xi \in T_xM$. 
\end{definition}

\medskip

This definition is a slight generalization of the usual definition, compare with \cite[Proposition 4.3.3]{ChernShen}.  
It is known that the connection associated  to a smooth Berwald metric can be chosen to be the Levi-Civita of some Riemannian metric \cite{MatveevTroyanov,Sz1}.

\section{The tautological  Finsler structure}

\begin{definition} Let us consider a proper convex set  $\mathcal{U} \subset \r^n$. This will be our ground manifold. 
The  \emph{tautological weak Finsler  structure} $F_f$ on $\mathcal{U}$ is the Finsler structure 
for which the unit ball at a point $x\in \mathcal{U}$ is the domain  $\mathcal{U}$  itself, but with the point $x$ as center.
The unit domain of the tautological weak Finsler  structure  is thus defined as
$$
  \Omega = \{(x,\xi) \in T\mathcal{U}   \tq \xi \in (\mathcal{U}  - x)\}  \subset T\mathcal{U} = \mathcal{U} \times \r^n,
$$ 
and the Lagrangian is given by 
$$
 F_f(x,\xi) = \inf \{t>0 \tq \xi \in t (\mathcal{U}-x)\} =  \inf \left\{t>0 \tq \left(x+\frac{\xi}{t}\right)\in \mathcal{U} \right\}.
$$
Equivalently, $F_f$ is given by $F_f(x,\xi) = 0$ if the ray $x+\r_+\xi$ is contained in $\mathcal{U}$ and 
$$
 F_f(x,\xi) > 0 \quad \text{and} \quad \left(x+\frac{\xi}{F_f(x,\xi) } \right)  \in \partial \mathcal{U}
$$
otherwise.
\end{definition}
The convex set $\mathcal{U}$ can   be recovered from the Lagrangian as follows:
\begin{equation}\label{eq.RecovU}
 \mathcal{U} = \{ z \in \r^n  \tq  F_f(x,z-x) < 1\}.
\end{equation}
By construction this formula is independent of $x$.
The tautological weak structure has been introduced by  Funk in 1929, see \cite{Funk1929}.
We will often call it the \emph{Funk weak metric}
on $\mathcal{U}$  (whence the index $f$ in the notation $F_f$).

\begin{remark}
If the convex set $\mathcal{U}$ contains the origin, then the  tautological weak structure 
 $F_f$ is the Zermelo transform of the Minkowski norm with unit ball $\mathcal{U}$ for 
the position vector field $Z_x = x$
\end{remark}

\begin{example}\label{example.ball} 
Suppose $\mathcal{U}$ is the Euclidean unit ball $\{x \in  \r^n \tq \|x\| < 1\}$, where $\| \cdot \|$ is the Euclidean norm. 
Then
 $$
  (x+\tfrac{\xi}{F_f})  \in \partial \mathcal{U}  \quad \Longleftrightarrow \quad  \left\|x + \tfrac{\xi}{F_f}\right\|^2= 1.
 $$
 Rewriting this condition as
 $$
  F_f^2 (1-\|x\|^2) - 2F\cdot \langle x,\xi \rangle - \|\xi\|^2=0
 $$
the non-negative root of this quadratic equations is given by
\begin{equation}\label{taut.ball}
  F_f(x,\xi) =\frac{\langle x,\xi \rangle  + \sqrt{\langle x,\xi \rangle^2 +(1-\|x\|^2)\|\xi\|^2}}{(1-\|x\|^2) },
\end{equation}
which is the Lagrangian of the tautological Finsler structure in the Euclidean unit ball. 
Observe that this is a Randers metric.
\end{example}    
 
\medskip

\begin{example}\label{example.halfspace}
 Consider a half-space $\mathcal{H} = \{x \in  \r^n \tq \langle \nu, x\rangle < \tau\} \subset \r^n$ where $\nu$
 is a non-zero vector and $\tau\in \r$;  here $ \langle \nosymbol, \nosymbol \rangle$ is the standard scalar product in $\r^n$.
We then have
 $$
   x+\frac{\xi}{F } \in \partial \mathcal{H}  \quad \Longleftrightarrow \quad   \langle \nu, x + \frac{\xi}{F}Ê\rangle = \tau,
 $$
which implies that
\begin{equation}\label{eq.FunkHalfspace}
   F_f(x,\xi) = \max \left(\frac{\left\langle \nu, \xi\right\rangle }{\tau-\langle \nu, x\rangle } , 0\right).
\end{equation}
\end{example} 

\medskip

We now compute the distance between two points in the tautological Finsler structure:

\begin{theorem}\label{th.tautdist}
 The tautological distance in a proper convex domain $\mathcal{U}\subset \r^n$ is given  by 
 \begin{equation}\label{taut.distd}
 \dfunk(p,q) = \log \left(\frac{|a-p|}{|a-q|}\right).
\end{equation}
 where $a$ is the intersection of the ray starting at $p$ in the direction of $q$ with the boundary $\partial \mathcal{U}$:
$$
 a =  \partial \mathcal{U} \cap \left( p+ \r_+ (q-p) \right).
$$
If  the  ray is contained in $\mathcal{U}$, then $a$ is considered to be a point at infinity and $\dfunk(p,q) = 0$.
\end{theorem}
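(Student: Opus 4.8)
The plan is to prove the two inequalities separately: first that the affine segment $[p,q]$ has Funk-length exactly $\log(|a-p|/|a-q|)$, giving the upper bound on $\dfunk(p,q)$, and then that no competing curve can be shorter, giving the matching lower bound.

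For the upper bound I would parametrize the segment by $\beta(t)=p+t(q-p)$, $t\in[0,1]$, and evaluate the Lagrangian directly. Since $\dot\beta(t)=q-p$ points along the ray towards $a$, the defining condition $\bigl(\beta(t)+\tfrac{q-p}{\tau}\bigr)\in\partial\mathcal{U}$ forces $\beta(t)+\tfrac{q-p}{\tau}=a$, whence $F_f(\beta(t),q-p)=|q-p|/\bigl(|a-p|-t|q-p|\bigr)$. Writing $L=|a-p|$ and $m=|q-p|$, the length is the elementary integral $\int_0^1 m/(L-tm)\,dt=\log\bigl(L/(L-m)\bigr)$, and since $q$ lies between $p$ and $a$ one has $L-m=|a-q|$, so $\ell([p,q])=\log(|a-p|/|a-q|)$. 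This shows $\dfunk(p,q)\le \log(|a-p|/|a-q|)$.

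For the lower bound the idea is to compare $\mathcal{U}$ with a supporting half-space at $a$. By convexity there is a supporting hyperplane $H=\{x:\langle\nu,x\rangle=\tau\}$ of $\mathcal{U}$ at $a$, with $\mathcal{U}\subseteq\mathcal{H}=\{\langle\nu,\cdot\rangle<\tau\}$. Directly from the definition of $F_f$ as an infimum, the inclusion $\mathcal{U}\subseteq\mathcal{H}$ gives the pointwise monotonicity $F_f^{\mathcal{U}}\ge F_f^{\mathcal{H}}$ (a smaller domain yields a larger Lagrangian). Using the explicit half-space Lagrangian of Example~\ref{example.halfspace}, formula (\ref{eq.FunkHalfspace}), any curve $\beta:[0,1]\to\mathcal{U}$ from $p$ to $q$ satisfies
$$\ell(\beta)\ \ge\ \int_0^1 F_f^{\mathcal{H}}(\beta,\dot\beta)\,dt\ \ge\ \int_0^1 \frac{\langle\nu,\dot\beta\rangle}{\tau-\langle\nu,\beta\rangle}\,dt.$$
Setting $\phi(t)=\tau-\langle\nu,\beta(t)\rangle>0$, the integrand is $-\phi'/\phi$, so the integral equals $\log\bigl(\phi(0)/\phi(1)\bigr)=\log\bigl((\tau-\langle\nu,p\rangle)/(\tau-\langle\nu,q\rangle)\bigr)$, a quantity independent of the curve. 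Finally, using $a\in H$ and the collinearity of $p,q,a$ along the unit direction $u$, I would write $\tau-\langle\nu,p\rangle=\langle\nu,a-p\rangle=|a-p|\,\langle\nu,u\rangle$ and likewise $\tau-\langle\nu,q\rangle=|a-q|\,\langle\nu,u\rangle$; the factor $\langle\nu,u\rangle$, which is positive because $p$ lies strictly inside $\mathcal{H}$, cancels, yielding $\ell(\beta)\ge\log(|a-p|/|a-q|)$.

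Combining the two bounds gives the claimed identity. The degenerate case, where the ray $p+\r_+(q-p)$ stays inside $\mathcal{U}$ and $a$ is a point at infinity, is treated separately: there $F_f\equiv 0$ along the segment, so $\ell([p,q])=0$ and $\dfunk(p,q)=0$. I expect the main obstacle to be the lower bound, and specifically the two observations that make it work: recognizing the half-space integrand as a logarithmic derivative, and choosing the comparison half-space to be supported at the very point $a$ where the ray exits. Once these are in place the estimate becomes curve-independent and matches the segment value exactly; the only ancillary points to check are the pointwise monotonicity $F_f^{\mathcal{U}}\ge F_f^{\mathcal{H}}$ and the existence of the supporting hyperplane, both immediate from convexity.
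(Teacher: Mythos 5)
Your proposal is correct and follows essentially the same route as the paper: the upper bound by computing the Funk length of the affine segment (the paper packages your direct evaluation of $F_f$ along the segment as a ``reproducing formula'' lemma), and the lower bound by comparing $\mathcal{U}$ with a supporting half-space at $a$ via the monotonicity of the tautological Lagrangian under inclusion, then evaluating the half-space integrand as a logarithmic derivative. The only differences are organizational (the paper treats the half-space case first as a standalone computation), not mathematical.
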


\medskip

\begin{definition}
 The distance (\ref{taut.distd}) is called the \emph{Funk metric}\index{Funk metric}   in $\mathcal{U}$.  
 In his paper \cite{Funk1929}, Funk introduced the 
 tautological Finsler structure while the distance (\ref{taut.distd}) appears as an interesting geometric object  in the 
 1959 memoir \cite{Zaustinsky}  by Eugene Zaustinsky, a  student of Busemann. We refer to  \cite{WeakFinsler, PT2013b,Zaustinsky} for some discussions
 on Funk geometry.
 \end{definition}

\begin{proof}  We follow the proof in \cite{WeakFinsler}. Consider first the  special case where $\mathcal{U} = \mathcal{H}$ is 
the half-space $ \{x \in \r^n \tq \langle \nu, x\rangle < \tau\}\subset \r^n$ for some vector $\nu \neq 0$. 
Then the tautological Lagrangian is given by (\ref{eq.FunkHalfspace}) and  the length of a curve $\beta$ joining $p$  to $q$ is
\begin{eqnarray*}
 \ell (\beta) &=&\int_0^1\max\left\{0,  \frac{\langle \nu, \dot \beta (s)\rangle }{s-\langle \nu, \beta (s) \rangle }\right\} {dt} 
=
\int_0^1\max\left\{0,  \frac{(\tau-\langle \nu,  \beta (s)\rangle)'   }{|\tau-\langle \nu, \beta (s) \rangle |}\right\} {dt}
\\ &\geq&  \max\left\{0,  \log\left(\frac{\tau-\langle \nu, p \rangle}{\tau-\langle \nu,q\rangle}\right)\right\},
\end{eqnarray*}
with equality if and only if $\langle \nu, \dot \beta (s)\rangle $ has almost everywhere constant sign. 
It follows that the tautological distance in the half-space $\mathcal{H}$ is given by
\begin{equation}\label{taut.dista}
 \dfunk(p,q) =  \max\left\{0,  \log\left(\frac{\tau-\langle \nu, p \rangle}{\tau-\langle \nu,q\rangle}\right)\right\}.
\end{equation}

\medskip

We can rewrite this formula in a different way. Suppose that the ray $L^+$ starting at $p$ in the direction of $q$
meets the hyperplane $\partial \mathcal{H} $ at a point $a$. Then $\langle \nu, p \rangle = \tau$ and  
$$
 \tau- \langle \nu, p \rangle = \langle \nu, a-p \rangle = |a-p| \cdot \langle \nu, \xi \rangle,
$$
where $\xi = \frac{a-p}{|a-p|}$. We also have 
$\tau- \langle \nu,q \rangle =  |a-q| \cdot \langle \nu, \xi \rangle$;  therefore
\begin{equation}\label{taut.distb}
 \dfunk(p,q) = \log \left(\frac{|a-p|}{|a-q|}\right).
\end{equation}
If the ray $L^+$ is contained in the half-space $\mathcal{U}$, then $\dfunk(p,q) = 0$ and the above formula still holds
in the limit sense if we consider the point $a$ to be at infinity.

\medskip

For the general case, we will need two lemmas on the general tautological Finsler structure.

\begin{lemma}\label{lem.monotone}
 Let $\mathcal{U}_1$ and $\mathcal{U}_2$ be two convex domains in $\r^n$. If $F_1$ and $F_2$ are the Lagrangians 
 of the corresponding tautological structures, then 
 $$
  \mathcal{U}_1  \subset \mathcal{U}_2 \quad \Longleftrightarrow \quad  F_1(x,\xi) \geq F_2(x,\xi)
 $$
for all $(x,\xi) \in T\mathcal{U}$. 
\end{lemma}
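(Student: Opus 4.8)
The plan is to prove the equivalence directly from the defining formula for the tautological Lagrangian, using the characterization of $\mathcal{U}$ as a sublevel set of $F$. Recall from the definition that
\[
 F_i(x,\xi) = \inf\{t>0 \tq \xi \in t(\mathcal{U}_i - x)\},
\]
and that by \eqref{eq.RecovU} we have $\mathcal{U}_i = \{z \in \r^n \tq F_i(x, z-x) < 1\}$ for any basepoint $x \in \mathcal{U}_i$. The key observation is that $F_i(x,\xi)$ is a Minkowski functional (gauge) of the convex set $\mathcal{U}_i - x$, and gauges reverse inclusions: a larger convex set gives a smaller gauge.

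For the forward direction ($\Rightarrow$), I would fix a basepoint $x \in \mathcal{U}_1 \subset \mathcal{U}_2$ and a tangent vector $\xi$. If $t>0$ satisfies $\xi \in t(\mathcal{U}_1 - x)$, then since $\mathcal{U}_1 \subset \mathcal{U}_2$ we have $t(\mathcal{U}_1 - x) \subset t(\mathcal{U}_2 - x)$, so $\xi \in t(\mathcal{U}_2 - x)$ as well. Hence every competitor $t$ in the infimum defining $F_1(x,\xi)$ is also a competitor for $F_2(x,\xi)$, and taking infima over a larger set of admissible $t$ can only decrease the value, giving $F_2(x,\xi) \leq F_1(x,\xi)$. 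This is the desired inequality, and it holds for all $(x,\xi)$.

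For the reverse direction ($\Leftarrow$), I would argue contrapositively or directly via \eqref{eq.RecovU}. Suppose $F_1(x,\xi) \geq F_2(x,\xi)$ for all $(x,\xi)$. Fix any point $z \in \mathcal{U}_1$ and a basepoint $x$; writing $\xi = z-x$, the inclusion $z \in \mathcal{U}_1$ means $F_1(x, z-x) < 1$ by \eqref{eq.RecovU}, and then $F_2(x, z-x) \leq F_1(x, z-x) < 1$ forces $z \in \mathcal{U}_2$, again by \eqref{eq.RecovU} applied to $\mathcal{U}_2$. Thus $\mathcal{U}_1 \subset \mathcal{U}_2$.

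The only genuine subtlety, and the step I would be most careful about, is the basepoint issue hidden in \eqref{eq.RecovU}: that formula requires a basepoint, but the statement asserts it is independent of $x$, so one must make sure the chosen $x$ lies in the appropriate set and that the sublevel-set characterization applies. For the forward direction this is a nonissue since the inclusion of the translated sets $t(\mathcal{U}_1 - x) \subset t(\mathcal{U}_2 - x)$ holds for \emph{any} $x$. For the reverse direction one should take $x \in \mathcal{U}_1$ (which is also in $\mathcal{U}_2$ once the inclusion is established, but a priori we only know $x \in \mathcal{U}_1$) and verify that \eqref{eq.RecovU} for $\mathcal{U}_2$ is valid at that basepoint; since $\mathcal{U}_2$ is convex and the formula is basepoint-independent, this causes no trouble provided $\mathcal{U}_2$ is nonempty. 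Modulo this bookkeeping, the proof is a direct consequence of the antitone relationship between a convex set and its gauge, and requires no computation beyond manipulating the defining infimum.
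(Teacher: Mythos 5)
Your proof is correct, and your forward direction is essentially verbatim the paper's \emph{entire} proof: the paper disposes of the lemma in one line, namely the inclusion of competitor sets $\{t>0 \tq \xi \in t(\mathcal{U}_1-x)\} \subset \{t>0 \tq \xi \in t(\mathcal{U}_2-x)\}$ and the resulting comparison of infima, and it never addresses the converse implication at all (only the forward direction is used later, in the proof of Theorem \ref{th.tautdist}). So your reverse direction via the sublevel-set identity (\ref{eq.RecovU}) is a genuine addition, and it is the natural one. The one place to tighten it is precisely the basepoint issue you flagged: (\ref{eq.RecovU}) applied to $\mathcal{U}_2$ is only justified for basepoints $x \in \mathcal{U}_2$; the paper's remark that the formula ``is independent of $x$'' means independence as $x$ ranges over $\mathcal{U}_2$ itself, and the convexity argument behind it (writing $z = (1-t)x + tw$ with $w \in \mathcal{U}_2$ and $t<1$) genuinely fails when $x$ lies well outside $\mathcal{U}_2$, so ``$\mathcal{U}_2$ is convex and the formula is basepoint-independent'' does not by itself close the gap. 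Two clean repairs: either read the hypothesis as quantifying over all $(x,\xi)$ at which both tautological Lagrangians are defined, i.e. $x \in \mathcal{U}_1 \cap \mathcal{U}_2$ (surely what the loosely written ``$T\mathcal{U}$'' in the statement intends), after which your argument is complete; or, if $F_2$ is extended to arbitrary basepoints by the gauge formula, observe that the assumed bound $F_2(x,\cdot) \leq F_1(x,\cdot) < \infty$ already forces $x \in \overline{\mathcal{U}_2}$, because for $x \notin \overline{\mathcal{U}_2}$ a strictly separating hyperplane yields directions $\xi$ with $F_2(x,\xi) = +\infty$, and for $x \in \overline{\mathcal{U}_2}$ the convexity argument underlying (\ref{eq.RecovU}) still goes through. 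Either way the defect is cosmetic bookkeeping, not a structural flaw, and your proof is strictly more complete than the paper's.
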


\begin{proof} We have indeed
$$
 F_1(x,\xi) = \inf \{t>0 \tq \xi \in t (\mathcal{U}_1-x)\}  \geq  
\inf \{t>0 \tq \xi \in t (\mathcal{U}_2-x)\} =   F_2(x,\xi).
$$
\end{proof}
 We also have the following  \emph{reproducing formula} for the tautological Lagrangian.

\begin{lemma}
 Let $\mathcal{U}$  be a convex domain in $\r^n$ and $p,q \in \mathcal{U}$. 
 If  $q = p + t \xi$ for some $\xi \in \r^n$, and $t \geq 0$, then
 \begin{equation} \label{eq.repr}
   F_f(q,\xi) = \frac{F_f(p,\xi)}{1-t F_f(p,\xi)}.
\end{equation}
\end{lemma}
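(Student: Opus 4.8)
The plan is to exploit the elementary observation that, because $t \geq 0$, the point $q = p + t\xi$ lies on the forward ray issued from $p$ in the direction $\xi$. Consequently the two forward rays $p + \r_+\xi$ and $q + \r_+\xi$ are nested half-lines and therefore meet the boundary $\partial\mathcal{U}$ at one and the same point $a$. The whole identity will then reduce to writing down $a$ in two ways. Before doing this I would dispose of the degenerate situations: if $\xi = 0$, or more generally if the ray $p + \r_+\xi$ is contained in $\mathcal{U}$, then $F_f(p,\xi) = 0$; the sub-ray $q + \r_+\xi$ is then also contained in $\mathcal{U}$, so $F_f(q,\xi) = 0$ as well, and both sides of (\ref{eq.repr}) vanish. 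Hence I may assume $\xi \neq 0$ and $s := F_f(p,\xi) > 0$.

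With these reductions, the definition of $F_f$ locates the boundary point reached from $p$ as
$$
 a = p + \frac{\xi}{s}, \qquad s = F_f(p,\xi),
$$
while the ray from $q$ reaches $\partial\mathcal{U}$ at $q + \xi/F_f(q,\xi)$. Since both equal the common boundary point $a$, and $q = p + t\xi$, subtracting gives
$$
 \frac{\xi}{F_f(q,\xi)} = a - q = \left(p + \frac{\xi}{s}\right) - (p + t\xi) = \left(\frac{1}{s} - t\right)\xi .
$$
As $\xi \neq 0$, comparing the scalar coefficients of $\xi$ yields $1/F_f(q,\xi) = (1 - ts)/s$, that is $F_f(q,\xi) = s/(1 - ts)$, which is exactly (\ref{eq.repr}). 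Note finally that $q \in \mathcal{U}$ forces $q$ to lie strictly between $p$ and $a$, i.e. $ts < 1$, so the denominator is positive and the formula is meaningful.

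The only delicate point — the main obstacle — is the assertion that the two rays determine the \emph{same} boundary point and that the infimum defining $F_f$ is genuinely attained there. I would make this rigorous by a monotone substitution rather than by appealing to the geometric picture directly. Plugging $q = p + t\xi$ into $F_f(q,\xi) = \inf\{u > 0 \tq p + (t + \tfrac{1}{u})\xi \in \mathcal{U}\}$ and setting $t + \tfrac{1}{u} = \tfrac{1}{w}$, i.e. $w = u/(1 + tu)$, turns the membership condition into $p + \xi/w \in \mathcal{U}$, where $u \mapsto w$ is a strictly increasing bijection of $(0,\infty)$ onto $(0, 1/t)$. By the convexity and openness of $\mathcal{U}$ the set $\{w > 0 \tq p + \xi/w \in \mathcal{U}\}$ is exactly the half-line $(s, \infty)$, so the infimum over $u$ is reached at $w = s$; inverting $w = u/(1 + tu)$ gives $u = s/(1 - ts)$, recovering the formula and simultaneously showing $ts < 1$. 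This substitution both legitimizes the geometric argument and handles every case uniformly.
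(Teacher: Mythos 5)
Your proof is correct and follows essentially the same route as the paper's: both treat the degenerate case $F_f(p,\xi)=0$ first (using that the rays from $p$ and $q$ in the direction $\xi$ are simultaneously contained in $\mathcal{U}$ or not), and then obtain the formula by observing that the two rays meet $\partial\mathcal{U}$ at a common point $a$ and subtracting the two expressions $a = p + \xi/F_f(p,\xi) = q + \xi/F_f(q,\xi)$. The monotone-substitution argument you append is an extra rigorization of the step the paper takes directly from the definition of $F_f$, but it does not change the underlying approach.
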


\begin{proof}  If $\xi = 0$, then there is nothing to prove. Assume $\xi \neq 0$ and  denote by $L_p^+$ and $L_q^+$ the rays 
in the direction $\xi$  starting at $p$ and  $q$. We obviously have $L_p^+ \subset \mathcal{U}$ if and only if 
$L_q^+ \subset \mathcal{U}$, but this means that 
$$
 F_f(p,\xi) = 0 \quad \Longleftrightarrow \quad F_f(q,\xi) = 0
$$
so Equation (\ref{eq.repr}) holds  in this case. If on the other hand the rays are not contained in $\mathcal{U}$, 
then we have
$$
  L_p^+ \cap \partial\mathcal{U} = L_q^+ \cap \partial\mathcal{U}.
$$
We denote by $a= a(x,\xi)$ this intersection point;  then by definition of $F$ we have
\begin{equation} \label{eq.apq}
 a = q  + \frac{\xi}{F_f(q,\xi)} = p + \frac{\xi}{F_f(p,\xi)} . 
\end{equation}
Since $q = p+ t \xi$, we have 
$$
  \frac{\xi}{F_f(q,\xi)} = \left( \frac{1}{F_f(p,\xi)} - t \right) \xi,
$$
from which  (\ref{eq.repr}) follows, since $\xi \neq 0$. 
\end{proof}

\bigskip

To complete the proof of Theorem \ref{th.tautdist}, we need to compute the  
tautological distance between two points $p$ and $q$ in a general convex domain 
$\mathcal{U} \in \r^n$. We first  compute the length of the affine segment $[p,q]$ which we parametrize as
$$
  \beta (t) = p + t \xi, \qquad \xi = \frac{y-x}{|y-x|}, \ 
$$
where $0 \leq t \leq |y-x|$. Let us denote as above by $a$ the point 
$L_p^+ \cap \partial\mathcal{U}$ where $L_p^+$ is the ray with origin $p$
in the direction $q$. Then 
$$
  F_f(p,\xi) = \frac{1}{|a-p|},
$$
and using (\ref{eq.repr})  we have  
$$
 F_f(\beta(t), \dot \beta (t)) = F_f(p+t\xi, \xi) =  \frac{F_f(p,\xi)}{1-t F_f(p,\xi)}
 = \frac{\frac{1}{|a-p|}}{1- \frac{t}{|a-p|}} = \frac{1}{|a-p| - t}.
$$
The length of $\beta$ is then
$$
 \ell (\beta) = \int_{0}^{|q-p|}  \frac{dt}{|a-p| - t} = \log \left(\frac{1}{|a-p| - |q-p|} \right)
 - \log \left(\frac{1}{|a-p|}\right).
$$
But $q \in [p,a]$, therefore $|a-p| - |q-p| = |a-q|$ and we finally have
$$
 \ell (\beta) =  \log \left(\frac{|a-p|}{|a-q|}\right).
$$
This proves that 
\begin{equation*} \label{taut.distc1}
  \dfunk(p,q) \leq  \log \left(\frac{|a-p|}{|a-q|}\right).
\end{equation*}
In fact we have equality. To see this, choose a supporting hyperplane for $\mathcal{U}$ at $a$,
and let $\mathcal{H}$ be the corresponding half-space containing $\mathcal{U}$
(recall that a  hyperplane in $\r^n$ is said to \emph{support} the convex set  $\mathcal{U}$
if it meets the closure of that set and  $\mathcal{U}$ is contained in one of the half-space 
bounded by that hyperplane). Using
Lemma \ref{lem.monotone} and Equation (\ref{taut.distb}), we obtain
\begin{equation*}\label{taut.distc}
 \dfunk(p,q) \geq  \log \left(\frac{|a-p|}{|a-q|}\right).
\end{equation*}
The proof of Theorem \ref{th.tautdist} follows now from the two previous  inequalities.
%  (\ref{taut.distc1})  and  (\ref{taut.distc}).
\end{proof}

\bigskip

\begin{remark}
\textbf{A.)} From Equation (\ref{eq.apq}), one sees that $a-p = \frac{\xi}{F_f(p,\xi)}$ and  $a-q = \frac{\xi}{F_f(q,\xi)}$. 
Therefore, the Funk  distance can also be written as
\begin{equation}\label{eq.distFFqp}
   \dfunk (p,q) = \log\left( \frac{F_f(q,q-p)}{F_f(p,q-p)}\right).
\end{equation}
\textbf{B.)} The proof of Theorem \ref{th.tautdist} given here is taken from \cite{WeakFinsler}. Another interesting 
proof is given in \cite{Yamada}.
\end{remark}

\begin{proposition}\label{prop.disttaut}
The tautological distance $\dfunk$ in a proper convex domain $\mathcal{U}$ satisfies the following properties
 \begin{enumerate}[a.)]
  \item The distance $ \dfunk$ is   projective, that is  for any point $z\in [p,q]$ we have
 $$
   \dfunk(p,q) =  \dfunk(p,z) +  \dfunk(z,p).
 $$
  \item $\dfunk$ is invariant under affine transformations.
  \item  $\dfunk$ is forward  complete.
  \item  $\dfunk$ is  is not backward complete.
\end{enumerate}
\end{proposition}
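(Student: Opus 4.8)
The plan is to use the explicit distance formula from Theorem~\ref{th.tautdist}, namely $\dfunk(p,q) = \log(|a-p|/|a-q|)$ where $a$ is the boundary point of the ray from $p$ through $q$, and to extract all four properties directly from this closed form.

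For property (a), I would take $z \in [p,q]$ and observe that the rays from $p$ through $z$ and from $p$ through $q$ are the same ray, hence they hit $\partial\mathcal{U}$ at the same point $a$; likewise the ray from $z$ through $q$ meets the boundary at the same $a$ (since $z,q,a$ are collinear in that order). Then the additivity follows from the telescoping identity $\log(|a-p|/|a-z|) + \log(|a-z|/|a-q|) = \log(|a-p|/|a-q|)$. (Note the statement has a typo: the right-hand side should read $\dfunk(p,z)+\dfunk(z,q)$, with the usual convention that $z$ lies between $p$ and $q$.) For property (b), affine invariance is clean: an affine map $T$ sends $\mathcal{U}$ to another convex domain, sends lines to lines and the ray from $p$ through $q$ to the ray from $Tp$ through $Tq$, so it sends the boundary point $a$ to the corresponding boundary point $Ta$ of $T\mathcal{U}$. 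The ratio $|a-p|/|a-q|$ is a ratio of lengths along a single line, and such ratios of collinear segments are preserved by affine maps; hence $\dfunk$ computed in $T\mathcal{U}$ between $Tp$ and $Tq$ equals $\dfunk$ in $\mathcal{U}$ between $p$ and $q$.

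For the completeness statements (c) and (d), the idea is to analyze what happens as a point moves along a geodesic ray, which by Theorem~\ref{th.tautdist} is an affine segment traversed toward the boundary. Fix $p$ and let $q$ move along the segment $[p,a)$ toward the boundary point $a$; then $|a-q|\to 0$, so $\dfunk(p,q) = \log(|a-p|/|a-q|) \to +\infty$. This shows the forward distance to the boundary is infinite, so forward Cauchy sequences cannot escape to $\partial\mathcal{U}$ and must converge to an interior point, giving forward completeness (c). Conversely, for (d) I would examine the \emph{backward} direction: using formula~(\ref{eq.distFFqp}) or the asymmetry of the defining ray, note that $\dfunk(q,p)$ involves the boundary point $a'$ of the ray from $q$ through $p$ (pointing the opposite way), and as $q\to a$ along the segment the quantity $\dfunk(q,p)$ stays \emph{bounded} (indeed $\dfunk(q,p)\to \log(|a'-a|/|a'-p|)$, a finite number). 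Hence one can construct a backward Cauchy sequence converging to the boundary point $a\notin\mathcal{U}$, which has no limit in $\mathcal{U}$; this exhibits the failure of backward completeness.

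The main obstacle will be making the completeness arguments rigorous rather than merely suggestive: forward completeness requires showing that \emph{every} forward Cauchy sequence (not just those along a single geodesic) stays in a compact subset of $\mathcal{U}$, which amounts to proving that the forward distance from any fixed basepoint to the boundary is infinite uniformly enough, and then invoking that a weak metric space in which bounded forward-closed sets are compact is forward complete. Care is also needed because $\dfunk$ is non-symmetric, so one must keep straight which argument of $\dfunk$ is which throughout; the asymmetry is precisely what drives the contrast between (c) and (d).
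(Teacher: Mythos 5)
The paper offers no proof of this proposition at all --- it simply declares ``the proof is easy'' and points to \cite{PT2013b} --- so your proposal is supplying an argument the text omits rather than shadowing an existing one. Parts (a) and (b) are correct and complete as you give them: all three relevant rays share the same exit point $a$ (including the degenerate case where $a$ is at infinity and all three distances vanish), so additivity is a telescoping of logarithms; and an invertible affine map carries $\partial\mathcal{U}$ to $\partial(T\mathcal{U})$ and preserves ratios of lengths of collinear segments. Your reading of the typo in (a) is also right: the right-hand side should be $\dfunk(p,z)+\dfunk(z,q)$.

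Parts (c) and (d) contain the correct mechanism, but each has a step to tighten. For (c), the precise form of ``uniformly enough'' is this: by Theorem \ref{th.tautdist}, the closed forward ball $\{q \tq \dfunk(p,q)\le r\}$ is exactly $p+(1-\mathrm{e}^{-r})(\overline{\mathcal{U}}-p)$, a homothetic copy of $\overline{\mathcal{U}}$ with ratio $1-\mathrm{e}^{-r}<1$; when $\mathcal{U}$ is bounded this is a compact subset of $\mathcal{U}$, the tail of any forward Cauchy sequence lies in one such ball, and since $\dfunk$ is dominated by a multiple of the Euclidean distance on compact subsets of $\mathcal{U}$, a Euclidean subsequential limit is a limit of the whole sequence by the triangle inequality. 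Note that this genuinely uses boundedness: for an unbounded proper domain (e.g.\ a half-space, formula (\ref{eq.FunkHalfspace})) the Funk metric is a degenerate weak metric, forward balls are not compact, and the status of (c) depends on which notion of convergence for weak metrics one adopts --- a subtlety the paper glosses over as well. For (d), your inference is stated backwards: boundedness of $\dfunk(x_i,p)$ does not imply the backward Cauchy property, which concerns the mutual distances $\dfunk(x_{i+k},x_i)$, and cannot be recovered from the triangle inequality alone. The fix is the same collinearity computation you already invoke: taking $x_i=a-\epsilon_i(a-p)$ with $\epsilon_i\downarrow 0$, the exit point of the ray from $x_{i+k}$ through $x_i$ is the second intersection $a'$ of the line through $p$ and $a$ with $\partial\mathcal{U}$, so $\dfunk(x_{i+k},x_i)=\log\left(|a'-x_{i+k}|/|a'-x_i|\right)\to 0$ because both lengths tend to $|a'-a|>0$. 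Finally, you should verify that this sequence has no limit in $\mathcal{U}$: for any fixed $x\in\mathcal{U}$ one has $\dfunk(x,x_i)\to\infty$ (the blow-up you established in (c)), while $\dfunk(x_i,x)$ tends to $\log\left(|b-a|/|b-x|\right)>0$, where $b$ is the exit point of the ray from $a$ through $x$; so convergence fails in either sense, and backward completeness indeed fails.
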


The proof is easy, see also \cite{PT2013b} for more on Funk geometry.

\begin{proposition}
The unit speed geodesic starting at $p\in \mathcal{U}$ in the direction $\xi \in T_p\mathcal{U}$ is the path
\begin{equation}\label{eq.FunkGeodesic1}
  \beta_{p, \xi} (s) =  p + \frac{(1-\mathrm{e}^{-s})}{F_f(p,\xi)}\cdot  \xi
\end{equation}
\end{proposition}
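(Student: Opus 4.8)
The plan is to use the fact that Funk geodesics are supported on affine lines, reduce the statement to a one-dimensional reparametrization problem, and solve the resulting ordinary differential equation. Throughout I assume $F_f(p,\xi) > 0$, i.e. the ray $p + \r_+\xi$ does leave $\mathcal{U}$; otherwise the Funk length along the ray vanishes and there is no unit-speed geodesic in that direction.

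First I would invoke the projectivity of $\dfunk$ established in Proposition \ref{prop.disttaut}: since the affine segment $[p,q]$ realizes the Funk distance, the geodesic issuing from $p$ in the direction $\xi$ must be carried by the ray $p + \r_+\xi$. I therefore write it as $\beta(s) = p + r(s)\,\xi$ with $r(0) = 0$ and $r$ nondecreasing, so that the only thing left to determine is the reparametrization $r(s)$ forced by the requirement that $\beta$ have unit speed, that is $F_f(\beta(s),\dot\beta(s)) = 1$ for all $s$.

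Next I would make the unit-speed condition explicit. Since $\dot\beta(s) = r'(s)\,\xi$ with $r'(s)\ge 0$, positive homogeneity of $F_f$ gives $F_f(\beta(s),\dot\beta(s)) = r'(s)\,F_f(\beta(s),\xi)$. Setting $q = \beta(s) = p + r(s)\,\xi$ and $t = r(s)$ in the reproducing formula (\ref{eq.repr}) yields
$$
  F_f(\beta(s),\xi) = \frac{F_f(p,\xi)}{1 - r(s)\,F_f(p,\xi)}.
$$
Writing $c = F_f(p,\xi)$, the unit-speed condition becomes the separable equation
$$
  \frac{c\, r'(s)}{1 - c\, r(s)} = 1, \qquad r(0) = 0.
$$

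Finally I would integrate: the left-hand side equals $-\tfrac{d}{ds}\log\!\left(1 - c\, r(s)\right)$, so integrating from $0$ to $s$ and using $r(0)=0$ gives $-\log(1 - c\, r(s)) = s$, hence $1 - c\, r(s) = \mathrm{e}^{-s}$ and $r(s) = (1 - \mathrm{e}^{-s})/c$, which is exactly (\ref{eq.FunkGeodesic1}). As a consistency check one may substitute back into the distance formula (\ref{taut.distd}): using $a - p = \xi/F_f(p,\xi)$ from (\ref{eq.apq}) one finds $|a - \beta(s)| = |\xi|\,\mathrm{e}^{-s}/c$, whence $\dfunk(\beta(s_1),\beta(s_2)) = s_2 - s_1$, confirming that $\beta$ is a globally minimizing unit-speed geodesic. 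I do not expect a serious obstacle: the only points needing care are that the reproducing formula applies (the hypothesis $t \ge 0$ holds because $r$ is nondecreasing) and that $r(s) < 1/c$ throughout, which is exactly what keeps $\beta(s)$ inside $\mathcal{U}$ and the denominators and logarithms well defined.
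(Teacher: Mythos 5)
Your proof is correct, and in substance it contains both of the proofs that the paper gives: your closing consistency check (computing $|a-\beta(s)| = |a-p|\,\mathrm{e}^{-s}$ and applying the distance formula (\ref{taut.distd})) is precisely the paper's displayed proof, while your unit-speed computation via the reproducing formula (\ref{eq.repr}) is the paper's alternative argument, which you run as a derivation (solving the separable ODE for $r(s)$) rather than as a verification of the stated formula. What the ODE direction buys is uniqueness of the unit-speed parametrization along the ray $p+\r_+\xi$; what it does not buy is the claim that the geodesic must be carried by that ray in the first place --- projectivity of $\dfunk$ (Proposition \ref{prop.disttaut}) says that affine segments are geodesics, not that every geodesic issuing from $p$ in direction $\xi$ is affine, and in a convex domain that is not strictly convex, minimizers between two points need not be unique. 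So, strictly speaking, your derivation alone establishes only a conditional statement; it is your consistency check that makes the argument a complete proof, and that check coincides with the paper's own argument.
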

 
\begin{proof}
Let us define $\beta$ by (\ref{eq.FunkGeodesic1});  we have then
 $$ a-p = \frac{\xi}{F(p,\xi)}, \quad \mbox{and} \quad a- \beta (s)  = \frac{\mathrm{e}^{-s}}{F(p,\xi)}\cdot  \xi.$$
 Therefore
 $$
   \dfunk(p,\beta (s))  = \log \left(\frac{|a-p|}{|a-\beta (s)|} \right)
   =    \log \left(\frac{1}{\mathrm{e}^{-s}} \right) = s.
$$
\end{proof}

We can also prove the proposition using the fact that (\ref{eq.FunkGeodesic1}) parametrizes
an affine segment and is therefore a minimizer for the length. We then only need to check that
the curve has unit speed. Indeed we have
  $$
  \dot \beta (s) =  \frac{\mathrm{e}^{-s}}{F_f(p,\xi)}\cdot  \xi,
 $$
 and using Equation (\ref{eq.repr}), we thus obtain
\begin{eqnarray*}
  F_f(\beta (s), \dot \beta (s)) &=&  \frac{\mathrm{e}^{-s}}{F_f(p,\xi)} \cdot F\left(p+  \frac{(1-\mathrm{e}^{-s})}{F_f(p,\xi)} \cdot  \xi , \xi \right)
  \\ &=&
   \frac{\mathrm{e}^{-s}}{F_f(p,\xi)} \cdot \frac{F_f(p,\xi)}{\left(1-\frac{(1-\mathrm{e}^{-s})}{F_f(p,\xi)} F_f(p,\xi)\right)}
   \\ &=& 1.
\end{eqnarray*}

\medskip

The tautological Finsler structure $F_f$ in a  convex domain $\mathcal{U}$ is not reversible. 
We can thus define the \emph{reverse tautological Finsler structure $F_f^*$} to be the Finsler structure whose Lagrangian
is defined as 
$$
  F_f^*(p,\xi) = F_f(p,-\xi).
$$
We then have the following
\begin{proposition}
 The distance $\dfunk^*$ associated to the reverse tautological Finsler structure in a proper convex domain $\mathcal{U} \subset \r^n$ is given by 
 \begin{equation}\label{Rtaut.distd}
 \dfunk^*(p,q) = \dfunk (q,p) = \log \left(\frac{|b-q|}{|b-p|}\right).
\end{equation}
 where $b$ is the intersection of the ray starting at $q$ in the direction $p$ with $\partial \mathcal{U}$.
\end{proposition}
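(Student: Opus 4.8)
The plan is to split the claimed identity into its two equalities and handle each separately. The second equality, $\dfunk(q,p) = \log\left(\frac{|b-q|}{|b-p|}\right)$, is nothing but Theorem~\ref{th.tautdist} applied with the roles of $p$ and $q$ interchanged: that theorem gives $\dfunk(q,p) = \log\left(\frac{|b-q|}{|b-p|}\right)$, where $b = \partial\mathcal{U} \cap \left(q + \r_+(p-q)\right)$ is precisely the intersection of the ray issuing from $q$ in the direction of $p$ with $\partial\mathcal{U}$. So no work is needed beyond quoting the theorem, and the degenerate case (ray contained in $\mathcal{U}$) is covered there as well, with $b$ at infinity and the distance equal to $0$.

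The first equality, $\dfunk^*(p,q) = \dfunk(q,p)$, is a general feature of reverse Finsler structures and follows by reversing curves. I would start from the definitions: $\dfunk^*(p,q)$ is the infimum of the $F_f^*$-length $\int_0^1 F_f(\beta(t), -\dot\beta(t))\,dt$ taken over all smooth curves $\beta : [0,1]\to\mathcal{U}$ with $\beta(0)=p$ and $\beta(1)=q$. Given such a $\beta$, set $\tilde\beta(t) = \beta(1-t)$, which is a curve from $q$ to $p$ with $\dot{\tilde\beta}(t) = -\dot\beta(1-t)$. The substitution $s = 1-t$ then shows that the $F_f$-length of $\tilde\beta$ equals $\int_0^1 F_f\bigl(\beta(s), -\dot\beta(s)\bigr)\,ds$, which is exactly the $F_f^*$-length of $\beta$. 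Since $\beta\mapsto\tilde\beta$ is a bijection between smooth curves from $p$ to $q$ and smooth curves from $q$ to $p$, taking the infimum over each family yields $\dfunk^*(p,q) = \dfunk(q,p)$.

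Combining the two equalities gives the stated formula. There is no genuine obstacle here; the only point demanding a little care is the bookkeeping of orientation under the substitution, so that the minus sign in $F_f^*(\beta,\dot\beta) = F_f(\beta,-\dot\beta)$ and the minus sign produced by reversing the parametrization cancel correctly, leaving precisely the $F_f$-length of the reversed path.
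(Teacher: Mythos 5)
Your proposal is correct and is exactly the argument the paper has in mind: the paper simply declares this proof obvious, the intended content being precisely your two observations, namely that reversing a curve $\beta \mapsto \tilde\beta$ converts $F_f^*$-length into $F_f$-length (giving $\dfunk^*(p,q) = \dfunk(q,p)$), after which the displayed formula is Theorem \ref{th.tautdist} with $p$ and $q$ interchanged. Your careful bookkeeping of the sign cancellation under the substitution $s = 1-t$ is a valid and complete fleshing-out of that omitted argument.
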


The proof is obvious. Observe in particular that $\dfunk^*$ is also a projective metric.

\section{The Hilbert metric}

\begin{definition}
  The \emph{Hilbert Finsler structure} $F_h$  in a convex domain $\mathcal{U}$ is the 
  arithmetic symmetrisation of the tautological Finsler structure:
  $$
    F_h(x, \xi) = \frac{1}{2}(  F_f(p,\xi) + F_f(p,-\xi)).
  $$
\end{definition}

\medskip

The Hilbert Finsler structure is reversible by construction. Its tangent unit ball at a point $p\in \mathcal{U}$  
is obtained from the tautological unit ball by the following procedure from convex geometry: first take the polar dual of $(\mathcal{U}-p)$, 
then symmetrize this convex set and finally take again the 
polar  dual of the result. This procedure is called the \emph{harmonic symmetrization} of $\mathcal{U}$ based at $p$, see \cite{Hsym} for the details.

\begin{example}\label{ex.Klein} 
Symmetrizing the metric  (\ref{taut.ball}) we obtain the Hilbert metric 
in the unit ball $\mathbb{B}^n$:
\begin{equation}\label{KleinMetric}
  F_h(x,\xi) =  \frac{\sqrt{(1-\|x\|^2)\|\xi\|^2 + \langle x,\xi \rangle^2}}{(1-\|x\|^2) }.
\end{equation}
Observe that this is a Riemannian metric. We shall prove later that it has constant sectional curvature $\K = -1$.
This metric is the Klein model\index{Klein model} for hyperbolic geometry. 
\end{example}

Using the fact that the affine segment joining two points $p$ and $q$ in $\mathcal{U}$ has minimal length  for both the Funk metric $F_f$ and its reverse $F_f^*$, it is easy to prove the following

 \begin{proposition}
The Hilbert distance $\dhilb$ in a convex domain $\mathcal{U} \subset \r^n$ 
is obtained by symmetrizing the Funk distance (\ref {taut.distd}) in that domain:
\begin{equation}\label{hilbert.distd}
 \dhilb(p,q) = \frac{1}{2}(\dfunk(p,q) + \dfunk(q,p))
  = \frac{1}{2}\log \left(\frac{|a-p|}{|a-q|} \cdot \frac{|b-q|}{|b-p|}\right).
\end{equation}
\end{proposition}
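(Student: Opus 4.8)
The plan is to exploit the additive structure of the definition $F_h(x,\xi) = \tfrac12\big(F_f(x,\xi)+F_f(x,-\xi)\big)$ at the level of lengths, and then to invoke the fact recalled just before the statement: the affine segment $[p,q]$ is simultaneously a minimizer for the Funk length and for the reverse Funk length. First I would observe that for any smooth curve $\beta:[0,1]\to\mathcal{U}$ the Hilbert length splits as
$$
\ell_h(\beta) = \int_0^1 F_h(\beta,\dot\beta)\,dt = \frac{1}{2}\left(\int_0^1 F_f(\beta,\dot\beta)\,dt + \int_0^1 F_f(\beta,-\dot\beta)\,dt\right) = \frac{1}{2}\left(\ell_f(\beta)+\ell_{f^*}(\beta)\right),
$$
since $F_f(\beta,-\dot\beta)=F_f^*(\beta,\dot\beta)$ by definition of the reverse structure. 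Here $\ell_f$ and $\ell_{f^*}$ denote the Funk and reverse Funk lengths.

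The lower bound then follows immediately. For every $\beta$ joining $p$ to $q$ we have $\ell_f(\beta)\geq \dfunk(p,q)$ and $\ell_{f^*}(\beta)\geq \dfunk^*(p,q)=\dfunk(q,p)$, the last equality being (\ref{Rtaut.distd}). Averaging these two inequalities and then taking the infimum over all such $\beta$ gives
$$
\dhilb(p,q)\geq \tfrac12\big(\dfunk(p,q)+\dfunk(q,p)\big).
$$

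For the matching upper bound I would evaluate $\ell_h$ on the single curve $\beta_0$ parametrizing the affine segment $[p,q]$. Because affine segments achieve the Funk distance (this is exactly the equality case established in Theorem \ref{th.tautdist}) and, by the same argument applied to the reverse structure, also achieve the reverse Funk distance, we have on this one curve $\ell_f(\beta_0)=\dfunk(p,q)$ and $\ell_{f^*}(\beta_0)=\dfunk^*(p,q)=\dfunk(q,p)$. Hence $\ell_h(\beta_0)=\tfrac12\big(\dfunk(p,q)+\dfunk(q,p)\big)$, so $\dhilb(p,q)$ is bounded above by this same quantity. The two bounds coincide and yield the first equality in (\ref{hilbert.distd}). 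The closed form is then obtained by substituting $\dfunk(p,q)=\log\big(|a-p|/|a-q|\big)$ from (\ref{taut.distd}) and $\dfunk(q,p)=\log\big(|b-q|/|b-p|\big)$ from (\ref{Rtaut.distd}), where $a$ and $b$ are the two boundary points cut out on the line through $p$ and $q$.

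The only genuinely substantive point — the \emph{hard part}, though it is light in this setting — is that the infimum of the averaged length equals the average of the two separate infima. In general $\inf(\ell_f+\ell_{f^*})$ could strictly exceed $\inf\ell_f + \inf\ell_{f^*}$ if the two functionals were minimized along different curves; what rescues the argument is precisely that one and the same curve, the affine segment $[p,q]$, minimizes both functionals at once. This simultaneous minimization is the crux, and it is exactly what the remark preceding the statement supplies.
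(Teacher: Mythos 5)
Your proof is correct and follows exactly the route the paper indicates: the paper's one-line justification (``Using the fact that the affine segment joining two points $p$ and $q$ in $\mathcal{U}$ has minimal length for both the Funk metric $F_f$ and its reverse $F_f^*$, it is easy to prove\dots'') is precisely your observation that one and the same curve minimizes both functionals, so the infimum of the averaged length equals the average of the two infima. You have simply written out in full the argument the paper leaves as easy, including the correct citations of (\ref{taut.distd}) and (\ref{Rtaut.distd}) for the closed form.
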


\vspace{0.25cm}

\hspace{3.6cm}\scalebox{0.9} % Change this value to rescale the drawing.
{
\begin{pspicture}(0,-1.9007623)(4.55775,1.900762)
\psbezier[linewidth=0.04](0.44074503,0.80477965)(0.88149005,1.702412)(3.50374,1.8807621)(4.020745,1.0247797)(4.53775,0.16879725)(3.2604187,-1.8296787)(2.260745,-1.8552203)(1.2610712,-1.8807621)(0.0,-0.0928527)(0.44074503,0.80477965)
\psline[linewidth=0.02cm](0.400745,0.74477965)(3.960745,-0.25522035)
\psdots[dotsize=0.103999995](0.42074502,0.72477967)
\psdots[dotsize=0.103999995](3.960745,-0.25522035)
\psdots[dotsize=0.103999995](1.440745,0.44477966)
\psdots[dotsize=0.103999995](2.840745,0.064779654)
\usefont{T1}{ptm}{m}{n}
\rput(1.34,0.22){$p$} % p
\usefont{T1}{ptm}{m}{n}
\rput(2.85,-0.15){$q$} % q
\usefont{T1}{ptm}{m}{n}
\rput(0.255,0.8){$b$}  % b
\usefont{T1}{ptm}{m}{n}
\rput(4.15,-0.29){$a$}  % a
\rput(4.45,0.56){$\partial \mathcal{U}$}
\end{pspicture} 
}

\vspace{0.3cm}

Using (\ref{eq.distFFqp}), one can also write the Hilbert distance  as
\begin{equation}\label{hilbert.distd2}
 \dhilb(p,q)    = \frac{1}{2}\log \left( \frac{F_f(q,q-p)}{F_f(p,q-p)} \cdot  \frac{F_f(p,p-q)}{F_f(q,p-q)}\right).
\end{equation}

\bigskip

We also have the following properties:
\begin{proposition}\label{prop.disthilb}
The  Hilbert distance $\dhilb$ in a proper convex domain $\mathcal{U}$ satisfies the following 
 \begin{enumerate}[a.)]
  \item The distance $ \dhilb$ is   projective:  for any point $z\in [p,q]$ we have
 $
   \dhilb(p,q) =  \dhilb(p,z) +  \dhilb(z,p).
 $
  \item $\dhilb$ is invariant under projective transformations.
  \item  $\dhilb$ is  (both forward and backward) complete.
\end{enumerate}
\end{proposition}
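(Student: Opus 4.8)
The three assertions will all be reduced to properties of the Funk metric collected in Proposition~\ref{prop.disttaut} and to the symmetrization formula (\ref{hilbert.distd}), so no new geometric computation is really needed beyond the identities already established.

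For \textbf{(a)} the plan is simply to push the projectivity of $\dfunk$ through the arithmetic symmetrization. Fix $z\in[p,q]$. Since $[p,q]=[q,p]$ as a set, $z$ lies on both oriented segments, so projectivity of the Funk metric (Proposition~\ref{prop.disttaut}(a)) applies to each ordering and yields $\dfunk(p,q)=\dfunk(p,z)+\dfunk(z,q)$ together with $\dfunk(q,p)=\dfunk(q,z)+\dfunk(z,p)$. Substituting these into $\dhilb=\tfrac12\bigl(\dfunk(p,q)+\dfunk(q,p)\bigr)$ and regrouping the four summands into $\tfrac12(\dfunk(p,z)+\dfunk(z,p))$ and $\tfrac12(\dfunk(z,q)+\dfunk(q,z))$ gives exactly $\dhilb(p,z)+\dhilb(z,q)$. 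Only the additive bookkeeping is involved.

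Part \textbf{(b)} is the one I expect to be the real obstacle, because $\dfunk$ is only \emph{affinely} invariant whereas $\dhilb$ must be invariant under the larger projective group; the bridge is the cross-ratio. I would first read off from (\ref{hilbert.distd}) that
\[
 \dhilb(p,q)=\tfrac12\log\left(\frac{|a-p|}{|a-q|}\cdot\frac{|b-q|}{|b-p|}\right)
\]
is one half the logarithm of the cross-ratio of the four collinear points $a,p,q,b$, exactly the Beltrami--Klein description recalled in the Introduction. Let $T$ be a projective transformation admissible for $\mathcal{U}$, meaning that its domain of definition contains $\overline{\mathcal{U}}$, so that $T(\mathcal{U})$ is again a bounded convex domain. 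Then I would invoke three standard facts about such $T$: it sends the line through $p,q$ to the line through $Tp,Tq$; it restricts to a homeomorphism of closures, hence carries $\partial\mathcal{U}$ to $\partial T(\mathcal{U})$ and preserves the betweenness order $b,p,q,a$ along the chord; and it preserves the cross-ratio of collinear points. Consequently the two boundary points attached to the pair $(Tp,Tq)$ in $T(\mathcal{U})$ are precisely $Ta$ and $Tb$, the bracketed quantity in the display is unchanged, and $\dhilb^{T(\mathcal{U})}(Tp,Tq)=\dhilb^{\mathcal{U}}(p,q)$. The only genuine content is the projective invariance of the cross-ratio; the care required is to restrict to admissible $T$ so that no point of $\overline{\mathcal{U}}$ is sent to infinity and the matching of the two boundary-intersection configurations is legitimate.

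For \textbf{(c)} I would exploit that $\dhilb$ is symmetric by construction, so forward and backward completeness coincide and it suffices to show closed balls are compact. The key estimate is $\dhilb(p,q)\geq\tfrac12\dfunk(p,q)=\tfrac12\log(|a-p|/|a-q|)$, valid since $\dfunk\ge 0$. As $q$ approaches a boundary point $c$, the convexity fact that the half-open chord from the interior point $p$ to a boundary point lies in $\mathrm{int}\,\mathcal{U}$ forces the exit point $a$ to converge to $c$ as well, so $|a-q|\to 0$ while $|a-p|\geq\mathrm{dist}(p,\partial\mathcal{U})>0$; hence $\dhilb(p,q)\to\infty$. It follows that every ball $\{q:\dhilb(p,q)\le R\}$ stays at positive Euclidean distance from $\partial\mathcal{U}$ and, $\mathcal{U}$ being bounded, has compact closure inside $\mathcal{U}$. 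Since $\dhilb$ is a genuine (reversible, positive) Finsler distance inducing the usual topology of $\mathcal{U}$, any Cauchy sequence is bounded, lies in such a compact ball, admits a convergent subsequence, and therefore converges; this establishes completeness.
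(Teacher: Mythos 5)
Your proposal is correct, and in fact the paper offers no proof at all here (it simply remarks ``The proof is elementary''), so your argument fills a gap rather than duplicates one. The three reductions you use --- symmetrizing the projectivity of $\dfunk$ for part (a), the cross-ratio/Beltrami--Klein description of $\dhilb$ together with projective invariance of the cross-ratio for part (b), and the lower bound $\dhilb \geq \tfrac12\dfunk$ plus the convexity fact that half-open chords from an interior point lie in the interior (forcing $\dhilb(p,q)\to\infty$ as $q\to\partial\mathcal{U}$, hence properness of metric balls) for part (c) --- are exactly the standard arguments the author intends by that remark, and each step you give is sound.
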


\medskip

The proof is elementary. For an introduction to Hilbert geometry,  we refer to sections 28 and 50 in  \cite{Busemann1955} and section 18 in  and  \cite{Busemann1955}.
We now describe the geodesics in Hilbert geometry. 

\begin{proposition}
The unit speed geodesic starting at $p\in \mathcal{U}$ in the direction $\xi \in T_p\mathcal{U}$ is the path
\begin{equation}\label{eq.HilbGeodesic1}
  \beta_{p, \xi} (s) =  p + \varphi(s)\cdot  \xi,
\end{equation}
where $\varphi$ is given by
\begin{equation}\label{eq.HilbGeodesic1a}
 \varphi(s) = \frac{(\mathrm{e}^s - \mathrm{e}^{-s})}{F_f(p,\xi)\mathrm{e}^s + F_f(p,-\xi)\mathrm{e}^{-s}}.
\end{equation}
\end{proposition}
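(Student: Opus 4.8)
The plan is to exploit the fact that the Hilbert metric is projective (Proposition \ref{prop.disthilb}), so its geodesics are carried by affine segments. Consequently any unit-speed geodesic issuing from $p$ in the direction $\xi$ must have the form $\beta(s) = p + \varphi(s)\,\xi$ for a scalar function $\varphi$ with $\varphi(0) = 0$ and $\varphi'(s) > 0$, which is exactly the shape of (\ref{eq.HilbGeodesic1}). The whole statement therefore reduces to identifying the reparametrization $\varphi$ that makes $s$ the arc length, and I would do this by computing $\dhilb(p, \beta(s))$ explicitly and checking that it equals $s$ for the candidate $\varphi$ of (\ref{eq.HilbGeodesic1a}).

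First I would abbreviate $A = F_f(p,\xi)$ and $B = F_f(p,-\xi)$ and use the reproducing formula (\ref{eq.repr}) to evaluate the Funk Lagrangian along the segment. Applying (\ref{eq.repr}) directly with base point $p$, direction $\xi$ and $t = \varphi(s)$ gives $F_f(\beta(s),\xi) = A/(1 - \varphi(s)A)$. For the reverse direction the formula has to be run the other way: writing $p = \beta(s) + \varphi(s)(-\xi)$ with $\varphi(s) \geq 0$ and solving (\ref{eq.repr}) for $F_f(\beta(s), -\xi)$ yields $F_f(\beta(s), -\xi) = B/(1 + \varphi(s)B)$. Substituting these into the cross-ratio form (\ref{hilbert.distd2}) of the Hilbert distance and using the positive homogeneity of $F_f$ (so that all the $\varphi(s)$ prefactors coming from $q - p = \varphi(s)\xi$ cancel), I obtain the compact expression
\[
 \dhilb(p, \beta(s)) = \frac{1}{2}\log\left(\frac{1 + \varphi(s)\,B}{1 - \varphi(s)\,A}\right).
\]

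It then remains to insert the explicit $\varphi$ of (\ref{eq.HilbGeodesic1a}). A short computation gives $1 - \varphi(s)A = (A+B)\mathrm{e}^{-s}/(A\mathrm{e}^s + B\mathrm{e}^{-s})$ and $1 + \varphi(s)B = (A+B)\mathrm{e}^{s}/(A\mathrm{e}^s + B\mathrm{e}^{-s})$, so their quotient is $\mathrm{e}^{2s}$ and hence $\dhilb(p,\beta(s)) = s$. Since $\beta$ runs along an affine segment it is length-minimizing, and because the distance from $p$ equals the parameter it is traversed at unit speed; this proves $\beta$ is the desired geodesic. I would finish by checking the initial condition: differentiating (\ref{eq.HilbGeodesic1a}) at $s = 0$ gives $\varphi'(0) = 2/(A+B) = 1/F_h(p,\xi)$, so $\dot\beta(0) = \xi/F_h(p,\xi)$ is indeed the $F_h$-unit vector pointing in the direction $\xi$.

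The only delicate point I anticipate is the bookkeeping in the reverse direction of the reproducing formula, where the constraint $t \geq 0$ in (\ref{eq.repr}) forces one to apply the identity from $\beta(s)$ back to $p$ rather than naively substituting a negative $t$. A secondary point is the implicit assumption that both rays from $p$ in the directions $\pm\xi$ meet $\partial\mathcal{U}$, i.e. $A, B > 0$; when one of them escapes to infinity the corresponding Funk term vanishes and the formula must be read in the limiting sense, exactly as in Theorem \ref{th.tautdist}. An alternative, slightly less computational route would be to write the unit-speed condition directly as the ODE $\varphi'(s)\,F_h(\beta(s),\xi) = 1$ with $\varphi(0) = 0$ and solve it; this also yields uniqueness, but the verification above is shorter.
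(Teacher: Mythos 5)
Your proposal is correct and follows essentially the same route as the paper: both verify the candidate $\varphi$ by computing $\dhilb(p,\beta(s))$ along the affine segment, reducing it to $\tfrac{1}{2}\log\bigl(\tfrac{1+\varphi(s)F_f(p,-\xi)}{1-\varphi(s)F_f(p,\xi)}\bigr)$, and checking that this quotient equals $\mathrm{e}^{2s}$. The only (immaterial) difference is that you reach the intermediate formula via (\ref{hilbert.distd2}) and the reproducing formula (\ref{eq.repr}), whereas the paper reads it off geometrically from the boundary points $a,b$ using (\ref{eq.apq}); your extra checks (initial velocity, degenerate rays) are welcome but not essential.
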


\medskip

\begin{proof}
 To simplify notation we write $F = F_f(p,\xi)$ and $F^* = F_f(p,-\xi)$. 
 From (\ref{eq.apq}) we have $a-p = \frac{\xi}{F}$
 and $b-q = - \frac{\xi}{F^*}$, therefore
 $$
   \varrho_h(p,\beta (s))  = \frac{1}{2}\log \left(\frac{|a-p|}{|a-\beta (s)|} \cdot \frac{|b-\beta (s)|}{|b-p|}\right)
   =  \frac{1}{2}\log \left( \frac{1+F^*\cdot \varphi(s)}{1-F \cdot \varphi (s)} \right).
 $$
From (\ref{eq.HilbGeodesic1a}), we have 
  $$
  \frac{1+F^*\cdot\varphi(s)}{1-F\cdot \varphi (s)} =  \frac{(F\cdot \mathrm{e}^s + F^*\cdot \mathrm{e}^{-s})  +  
  (F^*\cdot \mathrm{e}^s - F^*\cdot \mathrm{e}^{-s})}{(F\cdot \mathrm{e}^s + F^*\cdot \mathrm{e}^{-s})  -  (F\cdot \mathrm{e}^s - F\cdot \mathrm{e}^{-s})} 
 = \frac{(F + F^*)\cdot e^{s}}{(F+ F^*)\cdot \mathrm{e}^{-s}}  = e^{2s},
 $$
 and we conclude that 
 $$
     \dhilb(p,\beta (s))  = s.
 $$
\end{proof}

\begin{corollary}
 The metric balls in a Hilbert geometry are convex sets.
\end{corollary}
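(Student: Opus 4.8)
The plan is to write the metric ball $B_h(p,r)=\{q\in\mathcal{U} \tq \varrho_h(p,q)<r\}$ explicitly as a sublevel set of a convex function of $q$, after which convexity is immediate from the elementary fact that sublevel sets of convex functions are convex.

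First I would put the Hilbert distance from the fixed center $p$ into closed form using the distance formula (\ref{hilbert.distd}) together with the identity $a=p+\frac{\eta}{F_f(p,\eta)}$ coming from (\ref{eq.apq}), applied with $\eta=q-p$. Since $a$ lies on the ray from $p$ through $q$ and $b$ on the opposite ray, a short computation gives
$$
 \frac{|a-p|}{|a-q|}=\frac{1}{1-F_f(p,q-p)},\qquad \frac{|b-q|}{|b-p|}=1+F_f(p,p-q),
$$
and therefore, for every $q\in\mathcal{U}$,
$$
 \varrho_h(p,q)=\frac12\log\frac{1+F_f(p,p-q)}{1-F_f(p,q-p)}.
$$
Here the denominator is positive precisely because $q\in\mathcal{U}$ forces $F_f(p,q-p)<1$ by (\ref{eq.RecovU}).

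Next I would clear the denominator: since it is positive, $\varrho_h(p,q)<r$ is equivalent to $1+F_f(p,p-q)<e^{2r}\bigl(1-F_f(p,q-p)\bigr)$, that is
$$
 G(q):=e^{2r}\,F_f(p,q-p)+F_f(p,p-q)<e^{2r}-1 .
$$
Thus $B_h(p,r)=\{q \tq G(q)<e^{2r}-1\}$. The decisive observation is that $G$ is convex in $q$: each of the maps $q\mapsto F_f(p,q-p)$ and $q\mapsto F_f(p,p-q)$ is the composition of the affine map $q\mapsto \pm(q-p)$ with the weak Minkowski norm $F_f(p,\cdot)$, which is convex because it is subadditive and positively homogeneous (property (i.) and (ii.) of the definition); and $G$ is a nonnegative linear combination of these two convex functions. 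A sublevel set of a convex function is convex, so $B_h(p,r)$ is convex.

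The one bookkeeping point to verify is that this sublevel set lies inside $\mathcal{U}$, so that the closed form for $\varrho_h$ is valid throughout it: if $G(q)<e^{2r}-1$ then in particular $e^{2r}F_f(p,q-p)<e^{2r}-1$, whence $F_f(p,q-p)<1-e^{-2r}<1$ and so $q\in\mathcal{U}$ by (\ref{eq.RecovU}). There is no genuine obstacle in the argument; the only step requiring care is the algebraic passage from the cross-ratio distance to the two convex summands $F_f(p,q-p)$ and $F_f(p,p-q)$, after which convexity of sublevel sets does all the work. (Applying the same reasoning to a single summand recovers at the same time the convexity of the Funk balls, which are simply the scaled copies $p+(1-e^{-r})(\mathcal{U}-p)$ of $\mathcal{U}$.)
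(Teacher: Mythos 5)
Your proof is correct and takes essentially the same approach as the paper: you exhibit the ball as a sublevel set of the convex function $q\mapsto \mathrm{e}^{2r}F_f(p,q-p)+F_f(p,p-q)$, which is exactly the paper's characterization $\mathrm{e}^{r}F_f(p,z-p)+\mathrm{e}^{-r}F_f(p,p-z)\leq \mathrm{e}^{r}-\mathrm{e}^{-r}$ multiplied through by $\mathrm{e}^{r}$, and you conclude from convexity of the weak Minkowski norm $F_f(p,\cdot)$. The only inessential difference is that you derive this inequality directly from the cross-ratio distance formula (\ref{hilbert.distd}) and Equation (\ref{eq.apq}), whereas the paper reads it off the geodesic parametrization established in the proposition immediately preceding the corollary.
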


\begin{proof}
 We see from the previous proposition that the ball of radius $r$ around the point $p \in \mathcal{U}$ is the set of
 points $z\in  \mathcal{U}$ such that 
 $$
  \mathrm{e}^{r}F_f(p,z-p)
    + \mathrm{e}^{-r}F_f(p,p-z)  \leq {(\mathrm{e}^r - \mathrm{e}^{-r})},
 $$
which is a convex set.
\end{proof}

Note that another proof of this proposition is given in  \cite{PT2013b}.

\section{The fundamental tensor}

Our goal in this section is to write down and study an ordinary differential equation for 
the geodesics in a Finsler manifold $(M,F)$. To this aim we need to assume the following condition:

\begin{definition}\label{def.Fstronglyconvex}
 A Finsler metric $F$ on a smooth manifold $M$ is said to be \emph{strongly convex}\index{strongly convex Finsler metric} if
 it is smooth  on the slit tangent bundle $TM^0$ and if the  \emph{vertical Hessian} of $F^2$ at a point $(p,\xi) \in T^0M$  
\begin{equation}\label{fund.tensor0}
  \g_{p,\xi}(\eta_1,\eta_2) = \frac{1}{2}\left.\frac{\partial^2}{\partial u_1\partial u_2}\right|_{u_1=u_2=0} F^2(p, \xi +u_1\eta_1+u_2\eta_2)
\end{equation}
is positive definite.  The bilinear form (\ref{fund.tensor0}) is called the \emph{fundamental tensor}\index{fundamental tensor (of a Finsler metric)} of the Finsler metric.
\end{definition}

This condition is called the \emph{LegendreÐClebsch condition}\index{LegendreÐClebsch condition}   in the calculus of variations.  
Geometrically it means that the indicatrix at any point  is a hypersurface of strictly positive Gaussian curvature. 
 
 \medskip
 
Classical Finsler geometry has sometimes the  reputation of being an ``impenetrable forest of tensors\footnote{This comment on the subject goes back to the paper \cite{Busemann1950} by Busemann. The first sentence
 of this nice paper is  \emph{``The term \emph{Finsler space} evokes in most mathematicians the picture of an impenetrable forest whose entire vegetation consists of tensors.''} A quick glance at \cite{Rund59} will probably convince the reader.}'',  and 
we shall   need to venture a few steps into this wilderness.
It will be convenient to work with local coordinates on $TM$,  more precisely, if $U \subset M$ is the domain 
of some coordinate system $x^1, x^2, \dots, x^n$, then any vector $\xi \in TU$ can be written  as $\xi = y^i\frac{\partial}{\partial y^i}$. The 
$2n$ functions on $TU$ given by $x^1,  \dots, x^n, y^1,  \dots, y^n$  are called \emph{natural coordinates} on $TU$.
The restriction of the Lagrangian on $TU$ is thus given by a function of $2n$ variables $F(x,y)$. 
The Legendre-Clebsch condition states that $F(x,y)$ is smooth (although  $C^3$ would suffice) on $\{ y \neq 0\}$ and that the matrix given by
\begin{equation}\label{fund.tensor}
 g_{ij}(x,y) = \frac{1}{2}\frac{\partial^2 F^2}{\partial y^i\partial y^j}(x,y)
\end{equation}
is positive definite   for any $y\neq 0$.
The fundamental tensor   shares some formal properties with a Riemannian metric, but it is important to remember that it is not defined on $U$ (nor on the manifold $M$), but on the slit tangent bundle $T^0U$. 

\medskip

Manipulating tensors in Finsler geometry needs to be done with care. In general a \emph{tensor} on a Finsler manifold $(M,F)$ is a field of multilinear maps on $TM$
which smoothly depends on a point $(x,y) \in TM^0$ (and not only on a point $x\in M$ as in Riemannian geometry). In the case of the fundamental
tensor, note that\footnote{We use the summation convention: terms with repeated indices are summed from $1$ to $n$.}
$$
 \g_y(u,v) = \g_{x,y}(u,v) = g_{ij}(x,y)u^iv^j
$$
where $u = u_i(x)\frac{\partial}{\partial x^i}$ and  $v = v_j(x)\frac{\partial}{\partial x^j}$ are elements in $T_xM$.

\medskip

Given a point $(x,y) \in TM^0$, we have a canonical element in $T_xM$, namely the vector $y$ itself, and we can evaluate a given tensor on
that canonical vector. In particular we have the following basic fact.

\begin{lemma}\label{lem.gnonhol}
  The Lagrangian and the fundamental tensor are related by
 $$
  F(x,y) = \sqrt{\g_y(y,y)}= \sqrt{g_{ij}(x,y)y^iy^j}.
 $$
\end{lemma}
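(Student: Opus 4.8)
The plan is to derive the identity directly from the positive $1$-homogeneity of $F$ in the fibre variable, using Euler's relation for homogeneous functions. Since each $F_x$ is a weak Minkowski norm, property (ii) of the definition gives $F(x,\lambda y) = \lambda F(x,y)$ for every $\lambda \geq 0$; hence $F^2(x,\cdot)$ is positively homogeneous of degree $2$ on $T_xM$. By the strong convexity hypothesis $F$, and therefore $F^2$, is smooth on the slit tangent bundle, so all the derivatives below are legitimate for $y \neq 0$, which is exactly the domain on which $g_{ij}$ is defined.

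First I would record Euler's identity for $F^2$: differentiating $F^2(x,\lambda y) = \lambda^2 F^2(x,y)$ in $\lambda$ and setting $\lambda = 1$ yields (summation convention)
$$
  y^i \frac{\partial F^2}{\partial y^i}(x,y) = 2\, F^2(x,y).
$$
Next, I would note that the first derivatives $\partial F^2/\partial y^i$ are themselves positively homogeneous of degree $1$ in $y$, since differentiating a degree-$2$ homogeneous function lowers the degree by one. Applying the same Euler relation to each of these functions gives
$$
  y^j \frac{\partial^2 F^2}{\partial y^j \partial y^i}(x,y) = \frac{\partial F^2}{\partial y^i}(x,y).
$$

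Finally I would contract this with $y^i$ and substitute the first identity:
$$
  y^i y^j \frac{\partial^2 F^2}{\partial y^i \partial y^j}(x,y) = y^i \frac{\partial F^2}{\partial y^i}(x,y) = 2\, F^2(x,y).
$$
Dividing by $2$ and recalling the definition (\ref{fund.tensor}) of the fundamental tensor, this reads $g_{ij}(x,y)\, y^i y^j = F^2(x,y)$, i.e. $\g_y(y,y) = F^2(x,y)$, and taking square roots (with $F \geq 0$) gives the claim.

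There is essentially no hard step: the argument is nothing more than two applications of Euler's homogeneity relation. The only points that warrant a word of care are the appeal to smoothness on $TM^0$ to justify differentiating $F^2$ twice, and the restriction to $y \neq 0$ — which is no loss, since this is precisely where $g_{ij}$ is defined, and $F(x,0)=0$ holds trivially by homogeneity.
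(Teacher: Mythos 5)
Your proof is correct and follows exactly the paper's route: the paper also applies Euler's homogeneity relation twice to $\tfrac{1}{2}F^2$ (first to the degree-$2$ function $F^2$, then to its degree-$1$ partial derivatives) and contracts with $y$. Your write-up simply makes explicit the two applications that the paper leaves to the reader.
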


\begin{proof}
Recall that if  $f : \r^n\setminus \{0\}\to \r$ is a smooth positively homogenous function of degree $r$ on $\r^n$, that is,  $f(\lambda y) = \lambda^rf(y)$ for $\lambda \geq 0$, then its 
partial derivatives  are positively homogenous functions of degree $(r-1)$ and
$
  r\cdot f(y) = y^i\frac{\partial f}{\partial y^i}
$
(see \cite{PT2013a}). Applying this fact twice to the function $y \mapsto \frac{1}{2}F^2(x,y)$  proves  the 
Lemma.
\end{proof}

\medskip

Observe that the same argument also shows that the  fundamental tensor $g_{ij}(x,y)$ is $0$-homogenous with respect to $y$.
This type of argument  plays a central role in Finsler geometry and anyone venturing into the subject will soon 
become an expert in recognizing
how homogeneity is being used (sometimes in a hidden way) in calculations, or she will quit the
subject.

\section{Geodesics and the exponential map}

We now consider a curve $\beta : [a,b] \to M$ of class $C^1$ in the Finsler manifold $(M,F)$. 
Recall that its \emph{length} is defined as
\begin{equation}\label{eq.length}
  \ell (\beta) = \int_a^b F(\beta (s), \dot \beta (s)) ds.
\end{equation}
We also define the \emph{energy}\index{energy of a curve} of the curve $\beta$ by
\begin{equation}\label{eq.energy}
  E(\beta) = \int_a^b F^2(\beta (s), \dot \beta (s)) ds.
\end{equation}
The following basic inequality between these functionals holds:

\begin{lemma}
For any curve $\beta : [a,b] \to M$ of class $C^1$,  we have
$$
 \ell(\beta)^2 \leq (b-a) \, E(\beta),
$$
with equality if and only if $\beta$ has constant speed, i.e. $t \mapsto F(\beta (s), \dot \beta (s))$ is constant.
\end{lemma}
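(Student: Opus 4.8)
The statement is precisely the Cauchy--Schwarz inequality applied to the scalar speed function, so the plan is to reduce it to that classical estimate. First I would abbreviate the speed by setting
$$
  f(s) = F(\beta(s), \dot\beta(s)),
$$
which is a continuous (in particular integrable) nonnegative function on $[a,b]$, since $\beta$ is of class $C^1$ and $F$ is continuous. With this notation the two functionals become $\ell(\beta) = \int_a^b f(s)\, ds$ and $E(\beta) = \int_a^b f(s)^2\, ds$, so the asserted inequality is nothing but $\left(\int_a^b f\right)^2 \leq (b-a)\int_a^b f^2$.

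The cleanest route is to write $\ell(\beta) = \int_a^b 1 \cdot f(s)\, ds$ and apply the Cauchy--Schwarz inequality on $L^2([a,b])$ to the pair of functions $1$ and $f$:
$$
  \ell(\beta)^2 = \left(\int_a^b 1\cdot f\, ds\right)^2 \leq \left(\int_a^b 1^2\, ds\right)\left(\int_a^b f^2\, ds\right) = (b-a)\, E(\beta).
$$
Rather than quote the equality case of Cauchy--Schwarz as a black box, I prefer to obtain the whole statement from a single nonnegative integral, which makes the equality condition transparent. Let $c = \ell(\beta)/(b-a)$ denote the mean speed and expand
$$
  0 \leq \int_a^b \bigl(f(s) - c\bigr)^2\, ds = \int_a^b f^2\, ds - 2c\int_a^b f\, ds + c^2(b-a) = E(\beta) - \frac{\ell(\beta)^2}{b-a},
$$
where the last equality uses $\int_a^b f\, ds = c(b-a) = \ell(\beta)$. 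Rearranging gives $\ell(\beta)^2 \leq (b-a)\,E(\beta)$.

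The equality case then comes for free: since the integrand $(f - c)^2$ is continuous and nonnegative, the integral vanishes if and only if $f(s) \equiv c$ for all $s$, that is, if and only if the speed $s \mapsto F(\beta(s), \dot\beta(s))$ is constant. I do not anticipate any genuine obstacle here; the only point that warrants a word of care is the regularity of $f$, which is why I record at the outset that the $C^1$ hypothesis on $\beta$ together with continuity of $F$ guarantees $f$ is continuous, so all integrals are well defined and the vanishing of $\int(f-c)^2$ forces the pointwise identity $f\equiv c$.
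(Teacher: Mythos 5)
Your proof is correct and takes essentially the same approach as the paper: both reduce the statement to the Cauchy--Schwarz inequality applied to the constant function $1$ and the speed $f(s)=F(\beta(s),\dot\beta(s))$, with the equality case characterized by constancy of $f$. Your variance expansion $\int_a^b (f(s)-c)^2\,ds \geq 0$ (with $c$ the mean speed) is just an inline proof of that same instance of Cauchy--Schwarz; its only advantage is that the equality case becomes transparent, whereas the paper quotes collinearity of $f$ and $1$ in $L^2(a,b)$, which strictly speaking gives constancy only almost everywhere before continuity of $f$ is invoked.
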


\begin{proof}
Let us denote by $f(s)= F(\beta (s), \dot \beta (s))$ the speed of the curve. We have by the Cauchy-Schwarz inequality
$$
  \ell (\beta) = \int_a^b 1 \cdot f(s)ds \leq \left( \int_a^b 1^2 ds\right)^{1/2}  \left( \int_a^b v(t)^2 ds\right)^{1/2} 
  = \sqrt{(b-a)}\sqrt{E(\beta)}.
$$
Furthermore, the equality holds if and only if $f(s)$ and $1$ are collinear as element of $L^2(a,b)$, that is, if
and only if the speed $f(s)$ is constant.
\end{proof}

\medskip

\begin{corollary}
 The curve $\beta : [a,b] \to M$ is a minimal curve for the energy  if and only if it minimizes the length and has constant speed.
\end{corollary}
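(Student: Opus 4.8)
The plan is to exploit the inequality $\ell(\beta)^2 \le (b-a)\,E(\beta)$ from the preceding lemma, together with the fact that length---unlike energy---is invariant under reparametrization. Throughout, ``minimal'' is understood relative to the fixed endpoints $p=\beta(a)$, $q=\beta(b)$ and the fixed parameter interval $[a,b]$, since energy depends on the parametrization. The one extra ingredient I need is that every $C^1$ competitor $\gamma$ from $p$ to $q$ admits a constant-speed reparametrization $\tilde\gamma$ on $[a,b]$; such a reparametrization leaves the length unchanged, $\ell(\tilde\gamma)=\ell(\gamma)$, and by the equality case of the lemma realizes $E(\tilde\gamma)=\ell(\gamma)^2/(b-a)$.

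For the ``if'' direction, I would suppose $\beta$ has constant speed and minimizes length, and then chain, for an arbitrary competitor $\gamma$,
$$
  E(\beta)=\frac{\ell(\beta)^2}{b-a}\le \frac{\ell(\gamma)^2}{b-a}\le E(\gamma),
$$
where the first equality is the equality case of the lemma (constant speed), the middle inequality is length-minimality combined with the nonnegativity of lengths, and the last inequality is the lemma applied to $\gamma$. This shows $\beta$ minimizes energy.

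For the ``only if'' direction, I would suppose $\beta$ minimizes energy and extract both conclusions from the same device. To get length-minimality, take any competitor $\gamma$ and its constant-speed reparametrization $\tilde\gamma$, and write
$$
  \frac{\ell(\beta)^2}{b-a}\le E(\beta)\le E(\tilde\gamma)=\frac{\ell(\gamma)^2}{b-a},
$$
whence $\ell(\beta)\le\ell(\gamma)$. To get constant speed, apply the device to $\gamma=\beta$ itself: its constant-speed reparametrization $\tilde\beta$ satisfies $E(\tilde\beta)=\ell(\beta)^2/(b-a)\le E(\beta)$, while energy-minimality forces $E(\beta)\le E(\tilde\beta)$; hence equality holds in the lemma for $\beta$, which is exactly the constant-speed condition.

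The only delicate point---and the main obstacle---is the existence and regularity of the constant-speed reparametrization when the speed $f(s)=F(\beta(s),\dot\beta(s))$ vanishes on a nontrivial set, since then the arc-length function $\sigma(s)=\int_a^s f$ fails to be a diffeomorphism and $\tilde\gamma$ need not be $C^1$. For regular curves ($f>0$) the reparametrization is an honest $C^1$ change of variable and the argument is immediate; in the general case one either restricts the competition to regular curves or argues by approximation, and I would simply flag that this technical caveat does not affect the geometric content of the statement.
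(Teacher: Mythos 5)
Your proof is correct and follows exactly the route the paper intends: the corollary is stated there without proof, immediately after the lemma, and the implicit argument is precisely yours --- combine the equality case of the inequality $\ell(\beta)^2 \leq (b-a)\,E(\beta)$ with the reparametrization invariance of length, using the constant-speed reparametrization of a competitor as the comparison curve. Your closing caveat about curves whose speed vanishes on a nontrivial set is a legitimate technical point, handled at the same (informal) level of rigor as the paper itself.
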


\medskip

\begin{definition}
 The curve $\beta : [a,b] \to M$ is a \emph{geodesic}\index{geodesic} if it is a critical point for the energy functional.
\end{definition}

Sometimes the terminology varies,  and the term \emph{geodesic} also designates a critical or a minimal curve for the length 
functional. Note that our notion imposes the restriction that $\beta$ has constant speed. From the point of view of calculations,
this is an advantage since the length is invariant under forward reparametrization.

\medskip

We now seek to derive the equation satisfied by the geodesics. By the classical theory of the  calculus of variations,
a curve $s \mapsto \beta (s)$ in some coordinate domain $U \subset M$ is a
critical points of the energy  functional  (\ref{eq.energy}) if and only if the following \emph{Euler-Lagrange equations}\index{Euler-Lagrange equations}
\begin{equation}\label{EulerLagrange}
 \frac{d}{ds}\frac{\partial F^2}{\partial y^{\mu}} = \frac{\partial F^2}{\partial x^{\mu}} \quad (\mu = 1,\dots, n)
\end{equation}
hold, where $ (x(s), y(s)) = (\beta (s),\dot \beta (s))$.

\medskip

Using the fundamental tensor, one writes $F^2(x,y) = g_{ij}(x,y)y^iy^j$, therefore
\begin{equation}\label{eq.geoda}
 \frac{\partial F^2}{\partial x^{\mu}} = \frac{\partial g_{ij}}{\partial x^{\mu}} y^iy^j
\end{equation}
and 
$$
  \frac{\partial F^2}{\partial y^{\mu}} = \frac{\partial g_{ij}}{\partial y^{\mu}} y^iy^j + g_{i\mu}y^i + g_{j\mu}y^j.
$$
Observe that the sum $g_{i\mu}y^i$ and $g_{j\mu}y^j$ coincide. Using the homogeneity of $F^2$
in $y$ and  the fact that $F$ is of class $C^3$ on $y\neq 0$, we obtain
$$
  \frac{\partial g_{ij}}{\partial y^{\mu}} y^i =  \frac{\partial^3 F^2}{\partial y^{\mu}\partial y^i \partial y^j}\cdot  y^i
  =  \frac{\partial^3 F^2}{\partial y^i \partial y^{\mu}\partial y^j}\cdot  y^i = 0.
$$
Therefore
\begin{equation}\label{eq.geodb}
 \frac{\partial F^2}{\partial y^{\mu}} = 2 g_{i\mu}y^i.
\end{equation}
Differentiating this expression with respect to $s$, we get    
$$
 \frac{d}{ds}\frac{\partial F^2}{\partial y^{\mu}} = 2\frac{\partial g_{i\mu}}{\partial x^j}\cdot y^i \cdot\frac{d x^j}{ds}
 + 2 g_{i\mu}\cdot \frac{dy^i}{ds} + 2 \frac{\partial g_{i\mu}}{\partial y^j}\cdot y^i \cdot\frac{d y^j}{ds}.
$$
We have as before $\frac{\partial g_{i\mu}}{\partial y^j}\cdot y^i =0$ (because $ g_{i\mu}$ is $0$-homogenous),
therefore using $\dot x^i = y^i$ we obtain
\begin{equation}\label{eq.geodc}
  \frac{d}{ds}\frac{\partial F^2}{\partial y^{\mu}} =  2\frac{\partial g_{i\mu}}{\partial x^j}\cdot y^iy^j + 2 g_{i\mu}\dot y^i.
\end{equation}
From the equations (\ref{EulerLagrange}) (\ref{eq.geodb}) and (\ref{eq.geodc}), we obtain for $\mu = 1, \dots, n$
$$
 \sum_i g_{i\mu}\cdot \dot y^i = \frac{1}{2}\sum_{i,j}\left(  \frac{\partial g_{ij}}{\partial x^{\mu}}
 - 2 \frac{\partial g_{i\mu}}{\partial x^j} \right) y^iy^j.
$$
Multiplying this identity by $g^{k\mu}(x,y)$, where $g^{k\mu}g_{\mu i}= \delta^k_i$, and summing over $\mu$, one gets
\begin{equation}\label{eq.geodd}
 \dot y^k = \frac{1}{2}\sum_{i,j, \mu} g^{k\mu}\left(  \frac{\partial g_{ij}}{\partial x^{\mu}}
 - 2 \frac{\partial g_{i\mu}}{\partial x^j} \right) y^iy^j.
\end{equation}  

\begin{proposition}
 The $C^2$ curve $\beta (s)  \in (M,F)$ is a geodesic for the Finsler metric $F$ if only if
 in local coordinates, we have 
 \begin{equation}\label{eq.geod1}
 \ddot x^k + \gamma^k_{ij}\, \dot x^i\dot x^j = 0,
\end{equation}
where $\beta (s) = (x^1(s), \dots, x^n(s))$ is a local coordinate expression for the curve and 
$$
 \gamma^k_{ij}(x,y)  =  \frac{1}{2}g^{k\mu}\left(\frac{\partial g_{i\mu}}{\partial x^j} + \frac{\partial  g_{j\mu}}{\partial x^i} - \frac{\partial g_{ij}}{\partial x^{\mu}}\right),
$$
are  the \emph{formal Christoffel symbols}\index{formal Christoffel symbols} of $F$.
\end{proposition}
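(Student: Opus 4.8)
The plan is to start from Equation (\ref{eq.geodd}), which has already been extracted from the Euler--Lagrange equations (\ref{EulerLagrange}) in the computation above, and simply to recognize its right-hand side as (minus) the contraction of the formal Christoffel symbols with the velocities. Since the first-variation computation quoted just before (\ref{EulerLagrange}) identifies the critical points of the energy with the solutions of the Euler--Lagrange system, and since every algebraic step below will be reversible, this establishes both directions of the equivalence simultaneously.

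First I would substitute $y^i = \dot x^i$ along the curve, so that $\dot y^k = \ddot x^k$, and read (\ref{eq.geodd}) as an equation for $\ddot x^k$. The one genuinely substantive step is a symmetrization: since the factor $y^i y^j = \dot x^i \dot x^j$ is symmetric under the exchange $i\leftrightarrow j$, the coefficient $\partial g_{i\mu}/\partial x^j$ may be replaced by its symmetric part,
$$
  \frac{\partial g_{i\mu}}{\partial x^j}\, y^i y^j
  = \frac{1}{2}\left(\frac{\partial g_{i\mu}}{\partial x^j} + \frac{\partial g_{j\mu}}{\partial x^i}\right) y^i y^j .
$$
Inserting this into (\ref{eq.geodd}) and factoring out $-\tfrac{1}{2}g^{k\mu}$ gives
$$
  \ddot x^k
  = -\,\frac{1}{2}g^{k\mu}\left(\frac{\partial g_{i\mu}}{\partial x^j} + \frac{\partial g_{j\mu}}{\partial x^i} - \frac{\partial g_{ij}}{\partial x^{\mu}}\right)\dot x^i \dot x^j
  = -\,\gamma^k_{ij}\,\dot x^i \dot x^j ,
$$
which is precisely (\ref{eq.geod1}).

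To close the equivalence I would observe that every manipulation leading to (\ref{eq.geodd}) is reversible: by the strong convexity hypothesis the fundamental tensor $g_{ij}(x,y)$ is positive definite, hence invertible, so the passage from the Euler--Lagrange form $g_{i\mu}\dot y^i = \tfrac{1}{2}(\partial_{x^\mu}g_{ij} - 2\,\partial_{x^j}g_{i\mu})y^iy^j$ to (\ref{eq.geodd}), effected by contracting with the inverse $g^{k\mu}$, loses no information. Thus (\ref{eq.geod1}) holds if and only if the Euler--Lagrange equations (\ref{EulerLagrange}) hold, i.e. if and only if $\beta$ is a critical point of the energy.

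There is no deep obstacle here; the analytic content has already been spent in reaching (\ref{eq.geodd}). The only points requiring care are bookkeeping ones: tracking the sign correctly through the symmetrization, and keeping in mind that the $\gamma^k_{ij}$ depend on $(x,y)$ and are to be evaluated at $(x(s),\dot x(s))$ along the curve, so that the expression $\gamma^k_{ij}\dot x^i \dot x^j$ is meaningful despite the velocity-dependence of these formal Christoffel symbols. It is also worth recalling that the $0$-homogeneity of $g_{ij}$ in $y$ --- already invoked to discard the terms $\frac{\partial g_{i\mu}}{\partial y^j}\,y^i$ in the derivation of (\ref{eq.geodd}) --- is exactly what makes these objects behave consistently, and is implicitly what allows the constant-speed convention built into our definition of geodesic.
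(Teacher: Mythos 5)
Your proposal is correct and takes essentially the same route as the paper: the paper's proof is the single sentence that ``a direct calculation shows Equation (\ref{eq.geod1}) is equivalent to (\ref{eq.geodd}) with $y^k=\dot x^k$,'' and your symmetrization of $\partial g_{i\mu}/\partial x^j$ over $i\leftrightarrow j$ followed by the sign bookkeeping is exactly that calculation, with the reversibility remark (invertibility of $g_{ij}$ by strong convexity) correctly supplying the ``if and only if.''
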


\begin{proof}
 A direct calculation shows that Equation (\ref{eq.geod1}) is equivalent to (\ref{eq.geodd}) with $y^k = \dot x^k$. 
 \end{proof}
 
\medskip

Another way to write the geodesic equation is to introduce the functions
\begin{eqnarray*}\label{eq.Spray1}
 G^k(x,y)  &=&     \frac{1}{2}\gamma^k_{ij}(x,y)\, y^i y^j
 \\ &=&
   \frac{1}{4}g^{k\mu}\left(  2 \frac{\partial g_{i\mu}}{\partial x^j}  -  \frac{\partial g_{ij}}{\partial x^{\mu}}\right) y^iy^j
 \\ &=&    
   \frac{1}{2}g^{k\mu}\left( \frac{\partial^2 F}{\partial y^{\mu}\partial x^j} y^j - \frac{\partial F}{\partial x^{\mu}} \right),
\end{eqnarray*}
so the geodesic equation can be written as
\begin{equation} \label{eq.geod2}
 \dot y^k + 2G^k(x,y) = 0, \quad \dot x^k = y^k, \qquad(k = 1, \dots n).
\end{equation}
The vector field on $TU$ defined by 
\begin{equation}\label{eq.Spray2}
 G = y^k \frac{\partial}{\partial x^k} - 2G^k(x,y) \frac{\partial}{\partial y^k}
\end{equation}
is in fact independent on the choice of the coordinates $x^j$ on $U$ and is therefore globally defined on the 
tangent bundle $TM$. This vector field is called the \emph{spray}\index{spray} of the Finsler manifold. A significant 
part of Finsler geometry is contained in the behavior of its spray (see \cite{Shen2001a}). Observe that a curve $\beta(s) \in M$ is geodesic
if and only if its lift $(\beta (s), \dot \beta (s)) \in TM$ is an integral curve for the spray $G$.

\medskip

Observe the rather surprising analogy between the equation (\ref{eq.geod1}) and  the Riemannian geodesic equation. 
This is due to the fact that the $0$-homogeneity of $g_{i,j}(x,y)$ in $y$  implies that all the non-Riemannian terms in the 
calculation of the Euler-Lagrange equation end up vanishing.
The important difference between the Finslerian and the Riemannian cases 
lies in the fact that the formal Christoffel symbols $\gamma^k_{ij}$ are not functions of
the coordinates $x^i$ only. Equivalently, the spray coefficients are  generally not quadratic polynomials in the
coordinates $y^i$. In fact it is known that the spray coefficients $G^k(x,y)$ of  a Finsler metric $F$ are quadratic 
polynomials in the coordinates $y^i$ if and only if $F$ is Berwald.

\medskip   

On a smooth manifold $M$ with a strongly convex  Finsler metric, one can define an exponential map as it is done in Riemannian
geometry.  Because the coefficients $G^k(x,y)$ of the geodesic equation are Lipschitz continuous, for any  
point $p\in M$ and any  vector $\xi \in T_pM$, there is locally a unique solution to the geodesics  equation
$$
 s \mapsto \sigma_{\xi}(s) \in M, \qquad  -\epsilon < s < \epsilon
$$
with initial conditions $\sigma_{\xi}(0)= p$, $\dot \sigma_{\xi}(0) = \xi$. Observe 
that 
$$
  \sigma_{\lambda\xi}(s) = \sigma_{\xi}(\lambda s),
$$ 
so if $\xi$ is small enough, then $\sigma_{\xi}(1)$ is well defined and we denote it by
$$
 \exp_p(\xi) = \sigma_{\xi}(1).
$$
We then have the following 

\begin{theorem}
The set $\mathcal{O}_p$ of vectors $\xi \in T_pM$ for which $\exp_p(\xi)$ is defined is a neighborhood of $0\in T_pM$. The map 
$\exp_p : \mathcal{O}_p \to M$ is of class $C^1$ in the interior of $\mathcal{O}_p$ and its differential at $0$ is the identity. In particular, 
the exponential is a diffeomorphism from a neighborhood of $0 \in T_pM$ to a neighborhood of $p$ in $M$.

If $(M,F)$ is  connected and forward complete, then $\exp_p$ is defined on all of $T_pM$ and is a surjective map
$$
 \exp_p : T_pM \to M.
$$
\end{theorem}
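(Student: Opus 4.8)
The plan is to read everything off from the fact that geodesics are the integral curves of the spray $G$ of (\ref{eq.Spray2}), viewed as a vector field on the $2n$-dimensional manifold $TM$, and then to feed this into the standard ODE package (existence, uniqueness, openness of the flow domain, differentiable dependence on initial data) together with the inverse function theorem. First I would record the regularity of $G$. On the slit bundle $TM^0$ the coefficients $G^k(x,y)$ are smooth and positively $2$-homogeneous in $y$; since $|G^k(x,y)|\le C|y|^2$ and the first partials of $G^k$ are positively $1$-homogeneous in $y$, both $G^k$ and all its first derivatives extend continuously by $0$ across the zero section. Hence $G$ is a $C^1$ vector field on all of $TM$ (and $C^\infty$ off the zero section). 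By Cauchy--Lipschitz the flow $\phi_t$ of $G$ exists, is unique, and has \emph{open} domain $\mathcal{D}\subset\r\times TM$; because $G^k(x,0)=0$ the zero section consists of fixed points, so $(1,(p,0))\in\mathcal{D}$ and $\exp_p(0)=p$. As $\mathcal{O}_p=\{\xi\in T_pM:(1,(p,\xi))\in\mathcal{D}\}$ is the preimage of the open set $\mathcal{D}$ under a continuous map, it is open and contains $0$, which proves the first assertion.

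For the regularity and the differential I would use that the flow of a $C^1$ vector field is $C^1$ in its initial data, so $\xi\mapsto\exp_p(\xi)=\pi(\phi_1(p,\xi))$ is $C^1$ on $\mathcal{O}_p$ (and $C^\infty$ where $\xi\neq0$). The identity $d(\exp_p)_0=\mathrm{id}$ comes directly from the homogeneity $\sigma_{\lambda\xi}(s)=\sigma_\xi(\lambda s)$: indeed $\exp_p(t\xi)=\sigma_{t\xi}(1)=\sigma_\xi(t)$, so $\frac{d}{dt}\big|_{0}\exp_p(t\xi)=\dot\sigma_\xi(0)=\xi$. Since $d(\exp_p)_0$ is invertible, the inverse function theorem turns $\exp_p$ into a $C^1$-diffeomorphism from a neighborhood of $0$ onto a neighborhood of $p$.

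It remains to treat the forward complete case. I would first upgrade forward completeness (Definition \ref{def.completeness}) to \emph{geodesic} completeness, i.e. that every geodesic extends to $[0,\infty)$. Suppose $\sigma_\xi$ is defined on a maximal interval $[0,b)$ with $b<\infty$. Geodesics have constant speed $c=F(p,\xi)$, so $d(\sigma_\xi(s_1),\sigma_\xi(s_2))\le c\,|s_2-s_1|$ for $s_1\le s_2$; thus for any $s_i\uparrow b$ the sequence $\sigma_\xi(s_i)$ is forward Cauchy and converges to some $q$. In a chart near $q$ the velocities $y(s)$ stay in the compact $F$-sphere of radius $c$, so $\dot y=-2G(x,y)$ is bounded and $y(s)$ is Lipschitz in $s$, hence converges as $s\to b$; the integral curve of $G$ therefore has a limit in $TM$ and extends past $b$, contradicting maximality. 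Consequently $\exp_p$ is defined on all of $T_pM$.

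Finally, surjectivity is the Finsler Hopf--Rinow argument, which I expect to be the main obstacle. Fix $q\in M$ and set $R=d(p,q)<\infty$ (finite by connectedness). Using that $\exp_p$ is a local diffeomorphism, the small forward sphere $S_\delta=\exp_p(\{\,\xi : F(p,\xi)=\delta\,\})$ is compact for small $\delta<R$; choose $m\in S_\delta$ minimizing $d(\cdot,q)$ and write $m=\exp_p(\delta\xi_0)$ with $F(p,\xi_0)=1$. Let $\gamma(s)=\exp_p(s\xi_0)$, defined for all $s\ge0$. The crux is to prove $d(\gamma(s),q)=R-s$ on $[0,R]$: one shows the set $A=\{s\in[0,R]:d(\gamma(s),q)=R-s\}$ is nonempty and closed, and that if $s_0\in A$ with $s_0<R$ then repeating the sphere construction at $\gamma(s_0)$ forces $s_0+\delta'\in A$ for small $\delta'>0$, whence $\sup A=R\in A$. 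The key input at each step is that every path from the current point to $q$ must cross the small geodesic sphere, combined with the triangle inequality and the constant-speed minimization on short geodesics. Then $d(\gamma(R),q)=0$ gives $q=\gamma(R)=\exp_p(R\xi_0)$, proving surjectivity. The delicate points to watch are the non-symmetry of $d$ (one must consistently use forward distances $d(\cdot,q)$ and forward spheres) and the fact that $\exp_p$ is only $C^1$ at the origin, so every regularity claim must stop at $C^1$.
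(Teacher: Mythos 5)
The paper offers no proof of this theorem at all: it merely remarks that the regularity of the spray at $y=0$ makes the statement delicate and refers to \cite{BCS}. Your proposal reconstructs precisely the proof in that reference (a Whitehead-type $C^1$ argument for the local statements, and Finsler Hopf--Rinow for the global one), so the route is the expected one. On the local part your argument is sound, and in fact it adjudicates correctly a point on which the paper's own remark is imprecise: positive $2$-homogeneity of the $G^k$ together with smoothness on $TM^0$ does make the spray $C^1$ across the zero section (the first $y$-partials are positively $1$-homogeneous and the first $x$-partials are $2$-homogeneous, so all first partials extend continuously by $0$, while $G^k=O(|y|^2)$ gives differentiability on the zero section). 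What fails in general is only $C^2$ regularity, since the second $y$-derivatives are $0$-homogeneous and hence direction-dependent at $y=0$; this is consistent with the Akbar-Zadeh result quoted in the paper ($\exp_p$ is $C^2$ at the origin if and only if the metric is Berwald) and is exactly what caps your conclusion at $C^1$. Your treatment of the differential at the origin and the inverse function theorem is fine (note that homogeneity $\sigma_{\lambda\xi}(s)=\sigma_\xi(\lambda s)$ only holds for $\lambda\geq 0$, so you are computing one-sided directional derivatives; this suffices because you already know $\exp_p$ is differentiable at $0$).

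The genuine gap is in the surjectivity step. Your induction on the set $A=\{s\in[0,R] : d(\gamma(s),q)=R-s\}$ hinges on what you call ``the constant-speed minimization on short geodesics'': that for small $\delta$ the radial geodesics $s\mapsto\exp_x(s\xi)$ realize the distance, i.e.\ $d\bigl(x,\exp_x(\delta\xi)\bigr)=\delta\,F(x,\xi)$, and that every curve from $x$ to the complement of the exponential ball has length at least $\delta$. In Riemannian geometry this is the Gauss lemma; in the Finsler setting it still requires proof (via the Gauss lemma for the fundamental tensor $g_{ij}(x,y)$, or via an argument with the Hilbert form), and nothing in your proposal establishes it. Without it, neither the identity $d(p,q)=\delta+d(m,q)$ for the minimizing point $m\in S_\delta$, nor the step $s_0\in A\Rightarrow s_0+\delta'\in A$, closes, so the Hopf--Rinow argument is incomplete as written. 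Two smaller points in your escape lemma: you should justify that forward-Cauchy convergence takes place in the manifold topology (local comparability of $d$ with a chart distance), and then combine the Lipschitz bound with the triangle inequality to get $x(s)\to q$ as $s\to b^-$ along the whole parameter, not only along the chosen sequence. These are all standard, but they are exactly the ``details'' the paper delegates to \cite{BCS}, and a self-contained proof must supply them.
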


The coefficients $G^k(x,y)$ in the geodesic equation are in general not $C^1$ at $y=0$, therefore the proof of this theorem
is more delicate than its Riemannian counterpart. See \cite{BCS} for the details.

\medskip

Finally note that  $\exp_p$ has no reason to be of class $C^2$. In fact a result of Akbar-Zadeh  states that $\exp_p$
is a $C^2$ map near the origin if and only if $(M,F)$ is Berwald.  

\section{Projectively flat Finsler metrics}

\begin{definition}  \textbf{A.)}  A Finsler structure $F$ on a convex set $\mathcal{U} \subset \r^n$  is \emph{projectively flat} if every affine segment $[p,q]\subset 
\mathcal{U}$ can be parametrized as a geodesic.   

\smallskip

 \textbf{B.)}   A Finsler manifold $(M,F)$ is \emph{locally projectively flat}  if each point admits a neighborhood 
 that is isometric to a convex region in $\r^n$ with a  projectively Finsler structure.
\index{projectively flat Finsler manifold}
\end{definition}

\begin{examples}\label{ex.Fplat1}
Basic examples   are:
\begin{enumerate}[a.)]
  \item Minkowski metrics are obviously projectively flat.
  \item The Funk and the Hilbert metrics are  projectively flat Finsler metrics. In particular the Klein metric (\ref{KleinMetric})
  in $\mathbb{B}^n$ is a projectively flat Riemannian metric.
  \item The canonical metric on the sphere $S^n \subset \r^{n+1}$ is locally projectively flat.
  Indeed,  the central projection of  the half-sphere $S^n \cap \{ x_n < 0\}$ on the hyperplane $\{ x_n = -1\}$ with center the origin maps the great circles in $S^n$ on affine lines on  that hyperplane (in cartography, this map is called the \emph{gnomonic  representation of the sphere}). In formulas, the map $\r^n \to S^n$ is given by $x\mapsto \frac{(x,-1)}{\sqrt{1+\|x\|^2}}$, and the metric can be written  as
 \begin{equation}\label{SphericalMetric}
  F_x(x,\xi) =  \frac{\sqrt{(1+\|x\|^2)\|\xi\|^2 - \langle x,\xi \rangle^2}}{(1+\|x\|^2) }.
\end{equation}
This is the projective model for the spherical metric.
 \end{enumerate}
\end{examples}

A classic result, due to E. Beltrami, says that a Riemannian manifold is  locally projectively flat if and only if it has constant sectional curvature,  see Theorem \ref{th.Beltrami} below. 
In Finsler geometry, there are more examples and the Finlser version of Hilbert's  $\mathrm{IV}^{\mathrm{th}}$ problem is to determine and study all conformally flat (complete) Finsler metrics in a convex domain.

\medskip

A first result on projectively flat Finsler metrics is given in the next Proposition:

\begin{proposition}
A smooth and strongly convex Finsler metric $F$ on a convex domain $\mathcal{U}  \subset \r^n$ is projectively flat if only if its spray coefficients satisfy 
 $$
   G^k(x,y) = P(x,y) \cdot y^k
 $$
 for some scalar function $P : T\mathcal{U} \to \r$.
\end{proposition}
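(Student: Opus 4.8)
The plan is to characterize projective flatness directly in terms of the spray equation. By definition, $F$ is projectively flat if and only if every affine segment in $\mathcal{U}$, suitably parametrized, is a geodesic. An affine segment is a curve of the form $x(s) = p + \psi(s)\,v$ for a fixed direction $v \in \r^n$ and a scalar reparametrization $\psi$. The key observation is that such a curve has velocity $\dot x(s) = \dot\psi(s)\,v$ and acceleration $\ddot x(s) = \ddot\psi(s)\,v$, so both $\dot x$ and $\ddot x$ are scalar multiples of the same fixed vector $v$; equivalently, $\ddot x(s)$ is everywhere parallel to $\dot x(s)$. I would make this the organizing principle: an affine segment (up to parametrization) is exactly a curve whose acceleration is pointwise proportional to its velocity.

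First I would insert the reparametrized curve into the geodesic equation (\ref{eq.geod1}), which reads $\ddot x^k + \gamma^k_{ij}\dot x^i\dot x^j = 0$, or equivalently in spray form $\ddot x^k + 2G^k(x,\dot x) = 0$. Using the $2$-homogeneity of $G^k$ in $y$ (inherited from the definition $G^k = \tfrac{1}{2}\gamma^k_{ij}y^iy^j$, where $\gamma^k_{ij}$ is $0$-homogeneous) one checks how the spray term transforms under the substitution $y = \dot\psi\,v$, giving $2G^k(x,\dot x) = 2\dot\psi^2\,G^k(x,v)$. The geodesic equation for the affine segment then becomes $\ddot\psi\,v^k + 2\dot\psi^2\,G^k(x,v) = 0$ for each $k$.

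For the sufficiency direction, if $G^k(x,y) = P(x,y)\,y^k$ then the equation reads $\ddot\psi\,v^k + 2\dot\psi^2 P(x,\dot x)\,v^k = 0$, which is a single scalar ODE $\ddot\psi + 2\dot\psi^2 P = 0$ in $\psi$ (after dividing by the common factor $v^k$, using $\dot x = \dot\psi\,v$); this ODE always admits a solution, exhibiting the affine segment as a geodesic after reparametrization, so $F$ is projectively flat. For the necessity direction, suppose $F$ is projectively flat. Then for every $p$ and every direction $v$ the affine line through $p$ in direction $v$ admits a geodesic parametrization, so $\ddot\psi\,v^k + 2\dot\psi^2\,G^k(x,v) = 0$ holds for all $k$. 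Since the left-hand term $\ddot\psi\,v^k$ is a scalar multiple of $v^k$, the spray coefficients $G^k(x,v)$ must themselves be a scalar multiple of $v^k$: setting $P(x,v) := -\ddot\psi/(2\dot\psi^2)$ read off from any one nonzero component, one gets $G^k(x,v) = P(x,v)\,v^k$. Because $G^k$ is $2$-homogeneous and $v^k$ is $1$-homogeneous in $v$, the factor $P$ is automatically $1$-homogeneous, and the relation extends to all $y$ by homogeneity.

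The main obstacle I expect is bookkeeping the reparametrization cleanly: one must separate the tangential part of the acceleration (the $\ddot\psi$ term, which is harmless because it only affects the speed along the fixed line) from the requirement that the spray contribute nothing transverse to $v$. The essential content is that \emph{projective flatness forbids any transverse component of the spray}, and the algebra above shows that "no transverse component'' is precisely the condition $G^k \propto y^k$. I would also remark that the scalar $P(x,y)$ is positively homogeneous of degree one in $y$, which is the natural normalization and is forced by the homogeneity of $G^k$; this $P$ is the \emph{projective factor} of the flat structure.
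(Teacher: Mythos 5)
Your proposal is correct and takes essentially the same route as the paper: substitute the reparametrized affine line into the spray form of the geodesic equation, use the $2$-homogeneity of $G^k$ in $y$ to reduce it to $\ddot\psi\,v^k + 2\dot\psi^2 G^k(x,v) = 0$, and read off the proportionality $G^k(x,y) = P(x,y)\,y^k$ with $P = -\ddot\psi/(2\dot\psi^2)$. The only difference is that you also spell out the sufficiency direction (solvability of the scalar ODE $\ddot\psi + 2\dot\psi^2 P = 0$) and the homogeneity of $P$, which the paper leaves implicit as a standard argument.
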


\begin{proof} The proof is standard, see e.g. \cite{ChernShen}.  Suppose that the affine segments are geodesics.
This means that 
$$
 \beta (s) = p + \varphi(s) \xi
$$  
satisfy the geodesic equation (\ref{eq.geod2}) for any $p \in \mathcal{U}$ , $\xi \in \mathcal{U}$ and some (unknown) function $\varphi(s)$. 
We  have along $\beta$
$$
 y^k = \dot x^k = \dot \varphi(s) \xi^k, \quad  \dot y^k =  \ddot \varphi(s) \xi^k,
$$ 
therefore Equation (\ref{eq.geod2}) can be written as
$$
 \ddot \varphi(s) \xi^k + 2 \dot \varphi^2(s)  G^k(\beta (s),\xi) = 0,
$$
which implies that  $G^k(x,y) = P(x,y) \cdot y^k$ with
\begin{equation}\label{eq.P1}
 P(x,\xi) =  - \frac{ \ddot \varphi (0) }{2 \dot \varphi (0)^2}.
\end{equation}
\end{proof}
 
The function $P(x,\xi)$ in the previous Proposition is called the \emph{projective factor}. It can be computed from (\ref{eq.P1}) if the 
geodesics are explicitly known. For instance we have the

\begin{proposition}
 The projective factor of the tautological Finsler structure $F_f$  in the convex domain $\mathcal{U}$
 is given by
 $$
   P_f(x,y) = \frac{1}{2} F_f(x,y) .
 $$
For the Hilbert Finsler structure $F_h$, we have 
 $$
   P_h(x,y) = \frac{1}{2} (F_f(x,y) - F_f(x,-y)).
 $$
\end{proposition}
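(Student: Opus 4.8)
The plan is to feed the explicit geodesics of the two structures into the formula
$$
  P(x,\xi) = -\frac{\ddot\varphi(0)}{2\dot\varphi(0)^2}
$$
from (\ref{eq.P1}), which expresses the projective factor of any projectively flat metric in terms of the reparametrization function $\varphi$ of the affine geodesic $\beta(s) = p + \varphi(s)\xi$. Since (\ref{eq.FunkGeodesic1}) and (\ref{eq.HilbGeodesic1a}) already give these functions $\varphi$ in closed form for every base point $p$ and every direction $\xi$, the whole computation reduces to evaluating two derivatives at $s=0$ and substituting. Writing $F = F_f(p,\xi)$ and $F^* = F_f(p,-\xi)$ throughout keeps the formulas compact.

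First, for the tautological structure I would read off from (\ref{eq.FunkGeodesic1}) that $\varphi(s) = (1-\mathrm{e}^{-s})/F$, whence $\dot\varphi(s) = \mathrm{e}^{-s}/F$ and $\ddot\varphi(s) = -\mathrm{e}^{-s}/F$, so that $\dot\varphi(0) = 1/F$ and $\ddot\varphi(0) = -1/F$. Plugging into (\ref{eq.P1}) gives
$$
  P_f(p,\xi) = -\frac{-1/F}{2/F^2} = \frac{F}{2} = \frac{1}{2} F_f(p,\xi),
$$
as claimed. Because $\xi \in T_p\mathcal{U}$ is arbitrary, this identity holds for all $(x,y)$.

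For the Hilbert structure the relevant function is $\varphi(s) = (\mathrm{e}^s - \mathrm{e}^{-s})/(F\mathrm{e}^s + F^*\mathrm{e}^{-s})$ from (\ref{eq.HilbGeodesic1a}). The one genuinely computational step is the second derivative: setting $N(s) = \mathrm{e}^s - \mathrm{e}^{-s}$ and $D(s) = F\mathrm{e}^s + F^*\mathrm{e}^{-s}$ and using $N(0)=0$, $N'(0)=2$, $N''(0)=0$, $D(0)=F+F^*$, $D'(0)=F-F^*$, $D''(0)=F+F^*$, the quotient rule yields $\dot\varphi(0) = 2/(F+F^*)$ and $\ddot\varphi(0) = -4(F-F^*)/(F+F^*)^2$. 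Substituting these into (\ref{eq.P1}) collapses the powers of $(F+F^*)$ and leaves
$$
  P_h(p,\xi) = \frac{1}{2}(F - F^*) = \frac{1}{2}\bigl(F_f(p,\xi) - F_f(p,-\xi)\bigr).
$$

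I expect the only real obstacle to be the bookkeeping in the Hilbert second-derivative computation; everything else is a direct substitution. One conceptual point worth recording is that the value produced by (\ref{eq.P1}) is insensitive to the chosen (here unit-speed) parametrization of the affine geodesic, since $P$ is positively $1$-homogeneous in its second argument — a consequence of the $2$-homogeneity of the spray coefficients $G^k = P\cdot y^k$ — so evaluating at the direction $\xi$ of the unit-speed geodesic legitimately produces $P(x,\xi)$ for that same $\xi$, and letting $\xi$ range over $T_p\mathcal{U}$ recovers $P$ on all of $T\mathcal{U}$.
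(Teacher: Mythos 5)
Your proposal is correct and follows essentially the same route as the paper: both plug the explicit unit-speed geodesic reparametrizations (\ref{eq.FunkGeodesic1}) and (\ref{eq.HilbGeodesic1a}) into the formula $P(x,\xi) = -\ddot\varphi(0)/(2\dot\varphi(0)^2)$ from (\ref{eq.P1}), and your derivative computations (including the Hilbert second derivative) agree with the paper's.
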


\begin{proof} 
The geodesics for $F_f$ are given by  $\beta (s) = p + \varphi(s) y$
with
$$ 
 \varphi(s) = \frac{1}{F_f(p, \xi)}(1-\mathrm{e}^{-s}).
$$
Therefore
$$
 P_f(x,y) = - \frac{ \ddot \varphi (0) }{2 \dot \varphi (0)^2} 
 =  \frac{1}{2} F_f(p, \xi).
$$ 
The geodesics for the Hilbert metric $F_h(x,y) = \frac{1}{2}(F_f(x,y)+F_f(x,-y))$ are the curves
$\beta (s) = p + \varphi(s) y$ with
\begin{equation*}%\label{eq.HilbGeodesic1a}
 \varphi(s) = \frac{(\mathrm{e}^s - \mathrm{e}^{-s})}{F_f(p,\xi)\mathrm{e}^s + F_f(p,-\xi)\mathrm{e}^{-s}}.
\end{equation*}
The derivative of this function is
$$
 \dot \varphi(s) = \frac{2(F_f(p,\xi)+ F_f(p,-\xi))}{(F_f(p,\xi)\mathrm{e}^s + F_f(p,-\xi)\mathrm{e}^{-s})^2}, 
$$
and
$$
 \ddot \varphi(s) = -4 \, \frac{(F_f(p,\xi)+ F_f(p,-\xi))(F_f(p,\xi)\mathrm{e}^s - F_f(p,-\xi)\mathrm{e}^{-s})}{(F_f(p,\xi)\mathrm{e}^s + F_f(p,-\xi)\mathrm{e}^{-s})^3}.
$$
Therefore
$$
 P_h(x,y) =  - \frac{ \ddot \varphi (0) }{2 \dot \varphi (0)^2} 
 =  \frac{1}{2} (F_f(x,y) - F_f(x,-y)).
$$
\end{proof}

\medskip

It is clear that the distance associated to a projectively flat metric is projective in the sense of Definition
2.1 in \cite{PT2013a}.   It is also clear that the sum of two projective (weak) distances is again projective.
This suggests that one can write down a linear condition on $F$ that is equivalent to projective flatness.
This is the content of the next proposition which goes back to the work of  G. Hamel,
\cite{Hamel}.

\begin{proposition}\label{prop.hamel}
 Let $F : T\mathcal{U} \to \r$ be a smooth  Finsler metric on the convex domain $\mathcal{U}$. 
 The following conditions are
 equivalent.
 \begin{enumerate}[(a.)]
  \item $F$ is projective flat.
  \item $\ds y^k \frac{\partial^2 F}{\partial x^k\partial y^m} - \frac{\partial F}{\partial x^m} = 0$, \ for $1 \leq m \leq n$.
  \item  $\ds \frac{\partial^2 F}{\partial x^j\partial y^m} =  \frac{\partial^2 F}{\partial y^j\partial x^m} $, \  for $1 \leq j,m \leq n$.
  \item $\ds    y^k\frac{\partial^2 F}{\partial x^m\partial y^k} = y^k\frac{\partial^2 F}{\partial x^k\partial y^m}.$
\end{enumerate}
\end{proposition}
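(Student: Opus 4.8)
The plan is to treat the three analytic conditions (b), (c), (d) as a block governed purely by the homogeneity of $F$ in the fibre variable $y$, and then to bridge this block to the geometric condition (a) by means of the spray characterization of projective flatness proved above, namely that $F$ is projectively flat if and only if its spray coefficients satisfy $G^k(x,y) = P(x,y)\,y^k$ for some scalar $P$. Throughout I work on the slit bundle, where $F>0$, so that division by $F$ is legitimate.

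The cornerstone for the block (b)--(d) is one homogeneity identity. Since $F$ is positively $1$-homogeneous in $y$, Euler's relation gives $y^k \partial F/\partial y^k = F$; differentiating in $x^m$ and commuting partials yields
\[
 y^k \frac{\partial^2 F}{\partial x^m \partial y^k} = \frac{\partial F}{\partial x^m}.
\]
This shows that the left-hand side of (d) always equals $\partial F/\partial x^m$, so (d) reads $y^k\,\partial^2 F/(\partial x^k \partial y^m) = \partial F/\partial x^m$, which is exactly (b); hence (b) $\Leftrightarrow$ (d) with no further work. The implication (c) $\Rightarrow$ (d) is immediate, since (c) asserts that the matrix $A_{jm} = \partial^2 F/(\partial x^j \partial y^m)$ is symmetric, and (d) is merely its contraction against $y$.

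The one substantial step is (d) $\Rightarrow$ (c), which I expect to be the main obstacle. I would rewrite (d) as $y^k B_{mk} = 0$ for every $m$, where $B_{mk} = A_{mk} - A_{km}$ is antisymmetric and $0$-homogeneous in $y$. Differentiating in $y^l$ gives
\[
 B_{ml} + y^k \frac{\partial B_{mk}}{\partial y^l} = 0 .
\]
The half of $B_{mk}$ of the form $\partial^2 F/(\partial x^m\partial y^k)$ contributes $y^k\,\partial^3 F/(\partial x^m\partial y^k\partial y^l)=\partial_{x^m}\big(y^k\,\partial^2 F/(\partial y^k\partial y^l)\big)=0$, because $\partial F/\partial y^l$ is $0$-homogeneous so its $y$-Euler contraction vanishes. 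What survives is
\[
 B_{ml} = y^k \frac{\partial^3 F}{\partial x^k \partial y^m \partial y^l},
\]
whose right-hand side is symmetric in $m \leftrightarrow l$; since $B_{ml}$ is antisymmetric, this forces $B_{ml}=0$, i.e.\ (c). The delicate point is combining the $y$-derivative of the contracted condition with the homogeneity cancellation and then playing symmetry against antisymmetry.

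Finally, for (a) $\Leftrightarrow$ (b) I would substitute the spray formula $G^k = \tfrac14 g^{k\mu} H_\mu$ with $H_\mu = y^j\,\partial^2 F^2/(\partial y^\mu \partial x^j) - \partial F^2/\partial x^\mu$, and lower the index using $g_{\mu k}y^k = F\,\partial F/\partial y^\mu$, so that $G^k = P\,y^k$ becomes $\tfrac14 H_\mu = P\,F\,\partial F/\partial y^\mu$. Expanding $F^2 = F\cdot F$ gives
\[
 H_\mu = 2\frac{\partial F}{\partial y^\mu}\Big(y^j \frac{\partial F}{\partial x^j}\Big) + 2F\Big(y^j \frac{\partial^2 F}{\partial x^j \partial y^\mu} - \frac{\partial F}{\partial x^\mu}\Big),
\]
in which the second parenthesis is precisely the expression in (b). If (b) holds, choosing $P = \big(y^j\,\partial F/\partial x^j\big)/(2F)$ satisfies the equation, giving projective flatness. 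Conversely, contracting $\tfrac14 H_\mu = P\,F\,\partial F/\partial y^\mu$ with $y^\mu$ and invoking the homogeneity identity annihilates the (b)-term and forces the same value of $P$; reinserting it cancels the remaining terms and leaves $F\big(y^j\,\partial^2 F/(\partial x^j\partial y^\mu) - \partial F/\partial x^\mu\big)=0$, hence (b). This closes the circle of equivalences.
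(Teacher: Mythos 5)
Your proposal is correct, and for the analytic block (b)--(d) it is essentially the paper's own argument in different packaging: both proofs differentiate the contracted identity with respect to the fibre variable and exploit the symmetry in the free indices of the surviving third-derivative term $y^k\,\partial^3F/(\partial x^k\partial y^m\partial y^l)$ --- the paper differentiates (b) and antisymmetrizes, you differentiate (d) and observe that a symmetric tensor equal to an antisymmetric one must vanish --- while your opening remark that (b) and (d) are literally the same statement (since $y^k\,\partial^2F/(\partial x^m\partial y^k) = \partial F/\partial x^m$ by Euler's relation) is exactly the paper's step (d) $\Rightarrow$ (b). The genuine divergence is the bridge (a) $\Leftrightarrow$ (b). The paper never touches the spray: it writes the Euler--Lagrange equations of the \emph{length} functional in the form
\[
\frac{\partial^2F}{\partial y^k\partial y^m}\,\dot y^k \;=\; \frac{\partial F}{\partial x^m} - y^k\frac{\partial^2F}{\partial x^k\partial y^m},
\]
and notes that the affine parametrization has $\dot y^k = 0$, so straight lines are extremals precisely when (b) holds; no inversion of the fundamental tensor enters. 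You instead route through the spray characterization $G^k = P\,y^k$ together with $G^k = \tfrac14 g^{k\mu}\bigl(y^j\,\partial^2F^2/(\partial y^\mu\partial x^j) - \partial F^2/\partial x^\mu\bigr)$ (you use the correct $F^2$-version of this formula; the one displayed in Section 6 of the paper has a typo, with $F$ in place of $F^2$ and $\tfrac12$ in place of $\tfrac14$). Your computation is right in both directions, and it buys something the paper's proof does not: the explicit projective factor $2FP = y^k\,\partial F/\partial x^k$, i.e.\ Equation (\ref{eq.2FP}), falls out as a by-product, whereas the paper derives it separately later in Lemma \ref{lem.PF}. The cost is that your argument must invert $g_{ij}$, so your proof of (a) $\Leftrightarrow$ (b) requires $F$ to be strongly convex (as does the spray proposition you invoke), while Proposition \ref{prop.hamel} assumes only that $F$ is smooth; the paper's length-functional argument covers that weaker hypothesis.
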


\begin{proof}
The Euler-Lagrange equation for the length functional (\ref{eq.length}) is
$$
 0 = \frac{\partial F}{\partial x^m} - \frac{d}{dt}\frac{\partial F}{\partial y^m} 
 = \frac{\partial F}{\partial x^m} - \frac{\partial^2 F}{\partial x^k\partial y^m} \dot x^k - \frac{\partial^2 F}{\partial y^k\partial y^m} \dot y^k,
$$
and this can be written as
$$
\frac{\partial^2 F}{\partial y^k\partial y^m} \dot y^k
=
\frac{\partial F}{\partial x^m} - \frac{\partial^2 F}{\partial x^k\partial y^m} y^k.
$$
Now $F$ is projectively flat if and only if $x(t) = p + t \xi$ is a solution (recall that the length is invariant under 
reparametrization), which is
equivalent to $\dot y^k = 0$. The equivalence (a) $\Leftrightarrow$ (b) follows.

\smallskip

To prove (b) $\Rightarrow$ (c), we differentiate (b) with respect to $y^j$. This gives
$$
 A_{jm} =  y^k\frac{\partial^3 F}{\partial y^j\partial x^k\partial y^m} +
 \frac{\partial^2 F}{\partial x^j\partial y^m} - \frac{\partial^2 F}{\partial y^j\partial x^m} = 0,
$$
and thus
$$
  0 = \frac{1}{2}(A_{jm} - A_{mj}) = \frac{\partial^2 F}{\partial x^j\partial y^m} - \frac{\partial^2 F}{\partial y^j\partial x^m} ,
$$
which is equivalent to (c).

\smallskip

(c) $\Rightarrow$ (d) is obvious.

\smallskip

Finally,  to prove (d) $\Rightarrow$ (b), we use  the  homogeneity of $F$ in $y$. We have $F(x,y) =  y^k\frac{\partial F}{\partial y^k}$, 
therefore condition (d) implies
$$
 \frac{\partial F}{\partial x^m} =  y^k\frac{\partial^2 F}{\partial x^m\partial y^k} = y^k\frac{\partial^2 F}{\partial x^k\partial y^m},
$$
which is equivalent to (b).
\end{proof}

\begin{remark}
In \cite{Rapcsak}  A. Rapcsk gave a generalization of the previous conditions for the case of a pair of
projectively equivalent Finsler metrics, that is,  a pair of metrics having the same geodesics up to reparametrization.
\end{remark}

\smallskip

\begin{example}
 We know that the tautological (Funk) Finsler metric $F_f$ in a convex domain $\mathcal{U}$ is projectively flat. Using the previous proposition we have
 more examples:
 \begin{enumerate}[i)]
  \item The reverse Funk metric $F_f^*(x,\xi) = F_f(x, -\xi)$,
  \item The Hilbert metric $F_h(x, \xi) = \frac{1}{2}(F_f(x, -\xi)+F_f(x, -\xi))$,
  \item The metric $F_f(x, \xi) + F_0(\xi)$, where $F_0$ is an arbitrary (constant) Minkowski norm,
\end{enumerate}
are projectively flat. More generally if $F_1,F_2$ are projectively flat, then so is the sum $F_1+F_2$. Assuming either 
$F_1$ or $F_2$ to be (forward) complete,
the sum is also a (forward) complete solution to the Hilbert   $\mathrm{IV}^{\mathrm{th}}$   problem.
\end{example}
 
  %\medskip
 
The following result gives a general formula computing the projective factor of a  
projectively flat Finsler metric:

\begin{lemma}\label{lem.PF}
 Let $F(x,y)$ be a smooth projectively flat Finsler metric on some domain in $\r^n$. Then 
 the following equations hold
\begin{equation}\label{eq.2FP}
   2F P = y^k\frac{\partial F}{\partial x^k}
\end{equation}
 and
\begin{equation}\label{eq.2FPP}
  \frac{\partial F}{\partial x^m} = P  \frac{\partial F}{\partial y^m} + F  \frac{\partial P}{\partial y^m},
\end{equation}
 where $P(x,y)$ is the projective factor.
\end{lemma}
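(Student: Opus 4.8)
The plan is to derive the first identity (\ref{eq.2FP}) algebraically from the defining property $G^k = P\,y^k$ of a projectively flat metric together with the formula for the spray coefficients, and then to obtain the second identity (\ref{eq.2FPP}) by a single differentiation of the first in the fibre variable $y^m$, invoking Hamel's projective-flatness condition.

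First I would establish (\ref{eq.2FP}). Since $F$ is projectively flat, the spray coefficients satisfy $G^k(x,y) = P(x,y)\,y^k$. The main idea is to contract this relation with $g_{\ell k}\,y^\ell$. On the left, Lemma \ref{lem.gnonhol} gives $g_{\ell k}y^\ell y^k = F^2$, so the left side is $P\,F^2$. On the right I substitute the spray formula $G^k = \tfrac14 g^{k\mu}\bigl(2\,\partial_{x^j}g_{i\mu} - \partial_{x^\mu}g_{ij}\bigr)y^iy^j$; the contraction $g_{\ell k}y^\ell g^{k\mu}$ collapses to $y^\mu$, leaving $\tfrac14\bigl(2\,\partial_{x^j}g_{i\mu}\,y^\mu y^i y^j - \partial_{x^\mu}g_{ij}\,y^\mu y^i y^j\bigr)$. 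Because $y$ is held fixed when differentiating in $x$, one has $\partial_{x^j}g_{i\mu}\,y^iy^\mu = \partial_{x^j}F^2$ (and likewise for the second term), so both terms are scalar multiples of the single sum $y^j\,\partial_{x^j}F^2$ and combine with net coefficient $\tfrac14$. Thus $P\,F^2 = \tfrac14\,y^j\,\partial_{x^j}F^2$. Writing $\partial_{x^j}F^2 = 2F\,\partial_{x^j}F$ and cancelling one factor $F$ (legitimate since $F>0$ on $y\neq0$) yields $2FP = y^k\,\partial_{x^k}F$, which is exactly (\ref{eq.2FP}).

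For (\ref{eq.2FPP}) I would simply differentiate (\ref{eq.2FP}) with respect to $y^m$. The product rule turns the left side into $2\bigl(P\,\partial_{y^m}F + F\,\partial_{y^m}P\bigr)$, while the right side becomes $\partial_{x^m}F + y^k\,\partial^2_{y^m x^k}F$. At this point projective flatness enters a second time, now through condition (b) of Proposition \ref{prop.hamel}, namely $y^k\,\partial^2_{x^k y^m}F = \partial_{x^m}F$; this identifies the last term with $\partial_{x^m}F$, so the right side equals $2\,\partial_{x^m}F$. Dividing by $2$ gives precisely $\partial_{x^m}F = P\,\partial_{y^m}F + F\,\partial_{y^m}P$, which is (\ref{eq.2FPP}).

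The argument is mostly mechanical, and I do not anticipate a serious obstacle. The one point requiring care is the index bookkeeping in the contraction of the spray formula: one must recognize that the two $x$-derivative terms are multiples of the very same scalar $y^j\,\partial_{x^j}F^2$, so that their coefficients $2$ and $-1$ add to $1$ and produce the correct factor $\tfrac14$. Once (\ref{eq.2FP}) is in hand, the second identity is nothing more than its $y^m$-derivative combined with the already-proved Hamel identity.
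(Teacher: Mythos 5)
Your proof is correct, and for the second identity it coincides with the paper's; but your derivation of (\ref{eq.2FP}) takes a genuinely different route. You obtain (\ref{eq.2FP}) by pure tensor algebra: contract $G^k = P\,y^k$ with $g_{\ell k}y^\ell$, insert the explicit spray formula $G^k = \tfrac14 g^{k\mu}\bigl(2\,\partial g_{i\mu}/\partial x^j - \partial g_{ij}/\partial x^\mu\bigr)y^iy^j$, and use $g_{ij}y^iy^j=F^2$ (Lemma \ref{lem.gnonhol}) together with $(\partial g_{i\mu}/\partial x^j)\,y^iy^\mu = \partial F^2/\partial x^j$, which is the paper's Equation (\ref{eq.geoda}); your index bookkeeping (the collapse $g_{\ell k}y^\ell g^{k\mu}=y^\mu$ and the net coefficient $\tfrac14$) checks out. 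The paper instead argues dynamically: along a geodesic the Lagrangian is constant in $s$ (geodesics have constant speed), so $0 = dF/ds = (\partial F/\partial x^k)\,y^k + (\partial F/\partial y^k)\,\dot y^k$; substituting the geodesic equation $\dot y^k = -2P\,y^k$ and Euler's homogeneity relation $y^k\,\partial F/\partial y^k = F$ immediately yields $y^k\,\partial F/\partial x^k - 2PF = 0$. The ingredients are ultimately the same (spray plus homogeneity), but yours trades the conceptual step --- conservation of speed along geodesics --- for a pointwise computation that needs no geodesic through $(x,y)$ at all, only the formula for $G^k$; the paper's version is shorter and makes visible that (\ref{eq.2FP}) is just the statement $dF/ds=0$ rewritten. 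For (\ref{eq.2FPP}) the two proofs are identical: differentiate (\ref{eq.2FP}) in $y^m$ and use Hamel's condition (b) of Proposition \ref{prop.hamel} (plus equality of mixed partials) to replace $y^k\,\partial^2F/\partial x^k\partial y^m$ by $\partial F/\partial x^m$. Note finally that both arguments presuppose strong convexity --- yours through the inverse tensor $g^{k\mu}$, the paper's through the geodesic equation --- which is in any case required for the projective factor $P$ to be defined.
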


\begin{proof}
Along a geodesic $(x(s),y(s))$, we have 
$$
  \dot y^k = -2 G^k(x,y) = -2 P(x,y) y^k, \qquad  \dot x^k = y^k.
$$
Since the Lagrangian $F(x(s),y(s))$  is constant in $s$, we obtain the first equation
$$
 0 = \frac{dF}{ds} = \frac{\partial F}{\partial x^k} y^k + \frac{\partial F}{\partial y^k} \dot y^k
 =  \frac{\partial F}{\partial x^k} y^k  -2 P \frac{\partial F}{\partial y^k} y^k
 = \frac{\partial F}{\partial x^k} y^k  -2 P  F.
$$
Differentiating this equation and using (b) in Proposition \ref{prop.hamel}, we obtain
$$
2\left( P  \frac{\partial F}{\partial y^m} + F  \frac{\partial P}{\partial y^m}\right)
=   \frac{\partial }{\partial y^m} \left( y^k \frac{\partial F}{\partial x^k} \right) 
=    \frac{\partial F}{\partial x^m}  + y^k \frac{\partial^2 F}{\partial x^k\partial y^m} 
= 2 \frac{\partial F}{\partial x^m}. 
$$
\end{proof}

A first consequence is the following description of Minkowski metrics:

\begin{corollary}\label{cor.PnulMink}
A  strongly convex projectively flat Finsler metric on some domain in $\r^n$ is the restriction
of a Minkowski metric if and only if the associated  projective factor $P(x,y)$ vanishes  identically.
\end{corollary}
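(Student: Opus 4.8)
The plan is to derive both implications directly from Lemma \ref{lem.PF}, which relates the projective factor $P$ to the $x$-derivatives of the Lagrangian. The whole corollary is essentially a reading of the identities (\ref{eq.2FP}) and (\ref{eq.2FPP}) under the assumption that one side vanishes, so no new geometric input is needed beyond that lemma together with the defining positivity $F(x,y) > 0$ for $y \neq 0$.

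For the forward implication I would assume $F$ is the restriction of a Minkowski metric, so that $F(x,y) = F_0(y)$ is independent of $x$ and hence $\partial F/\partial x^k \equiv 0$ for every $k$. Plugging this into the first identity (\ref{eq.2FP}), namely $2FP = y^k \partial F/\partial x^k$, gives $2 F(x,y)\, P(x,y) = 0$ wherever $y \neq 0$. Since $F(x,y) > 0$ for $y \neq 0$, this forces $P(x,y) = 0$ there, i.e. $P$ vanishes identically on the slit tangent bundle where it is defined.

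For the converse I would assume $P \equiv 0$. Because $P$ is identically zero as a function, all of its partial derivatives vanish as well, in particular $\partial P/\partial y^m \equiv 0$. Feeding $P \equiv 0$ and $\partial P/\partial y^m \equiv 0$ into the second identity (\ref{eq.2FPP}), that is $\partial F/\partial x^m = P\, \partial F/\partial y^m + F\, \partial P/\partial y^m$, yields $\partial F/\partial x^m \equiv 0$ for every $m$. Since the domain $\mathcal{U}$ is convex and hence connected, a Lagrangian with vanishing $x$-gradient must be independent of the base point: $F(x,y) = F_0(y)$ with $F_0(y) := F(x_0,y)$ for any fixed $x_0 \in \mathcal{U}$. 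Strong convexity of $F$ passes to $F_0$, so $F_0$ is a Minkowski norm and $F$ is the restriction of the corresponding Minkowski metric.

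There is no serious obstacle here; the only points deserving a word of care are that the identities of Lemma \ref{lem.PF} live on the region $y \neq 0$ (so the conclusion $P = 0$ is drawn there, the zero section being irrelevant), and that one must invoke connectedness of $\mathcal{U}$ to pass from a vanishing $x$-gradient to genuine independence of the base point. Alternatively, in the converse one could avoid (\ref{eq.2FPP}) and argue from (\ref{eq.2FP}) alone, combining $y^k \partial F/\partial x^k = 0$ with Hamel's symmetry condition (c) of Proposition \ref{prop.hamel} and Euler's homogeneity relation to recover $\partial F/\partial x^m \equiv 0$; but using (\ref{eq.2FPP}) is the shorter route.
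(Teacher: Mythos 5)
Your proposal is correct and takes essentially the same route as the paper: the converse direction is word-for-word the paper's argument (feed $P\equiv 0$ into (\ref{eq.2FPP}) to get $\partial F/\partial x^m \equiv 0$), and your forward direction via (\ref{eq.2FP}) is only a cosmetic variant of the paper's observation that $P(x,y)y^k = G^k(x,y) = 0$ when $F$ has no $x$-dependence. Your added remarks --- that the identities live on $y\neq 0$ and that connectedness of the convex domain is needed to pass from a vanishing $x$-gradient to genuine independence of the base point --- are correct points of care that the paper leaves implicit.
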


\begin{proof}
 Suppose $F(x,y)$ is locally Minkowski, then $\frac{\partial F}{\partial x^m} = 0$ and therefore the spray coefficient satisfies
 $P(x,y) y^k = G^k(x,y) = 0$.
 Conversely, if $P(x,y) \equiv 0$, then the second equation in the lemma implies that $\frac{\partial F}{\partial x^m} = 0$.
\end{proof}

Another consequence is the following result about the projective factor of a reversible Finsler metric.

\begin{corollary}\label{cor.Preversible}
 Let $F(x,y)$ be a strongly convex projectively flat Finsler metric. 
 Suppose $F$ is reversible, then its projective factor satisfies
 $$
   P(x,-y) = - P(x,y).
 $$
\end{corollary}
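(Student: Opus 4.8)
The plan is to start from the explicit formula for the projective factor established in Lemma \ref{lem.PF}, namely equation (\ref{eq.2FP}), which reads $2FP = y^k\frac{\partial F}{\partial x^k}$. Solving for $P$ on the slit tangent bundle (where $F>0$) gives $P(x,y) = \frac{1}{2F(x,y)} y^k\frac{\partial F}{\partial x^k}(x,y)$. With this closed expression in hand, the corollary reduces to tracking how each factor behaves under the reflection $y \mapsto -y$.

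First I would exploit reversibility at the level of the Lagrangian: the hypothesis $F(x,-y)=F(x,y)$ says that $F$ is an even function of the fibre variable $y$. Since the reflection acts only on $y$ and leaves $x$ untouched, the base derivatives $\frac{\partial F}{\partial x^k}$ are again even in $y$. Concretely, differentiating the identity $F(x,y)=F(x,-y)$ with respect to $x^k$ (the reflection and the $x$-derivative commute) yields $\frac{\partial F}{\partial x^k}(x,-y) = \frac{\partial F}{\partial x^k}(x,y)$ for every $k$.

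Then I would substitute $-y$ for $y$ in the formula for $P$ and collect the parities: the prefactor $\frac{1}{2F(x,-y)} = \frac{1}{2F(x,y)}$ is even, each $\frac{\partial F}{\partial x^k}(x,-y)$ is even, while the contracted vector $y^k$ becomes $-y^k$ and contributes the single sign change. Hence $P(x,-y) = \frac{1}{2F(x,y)}(-y^k)\frac{\partial F}{\partial x^k}(x,y) = -P(x,y)$, which is the claim.

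There is no real obstacle here beyond a correct application of the chain rule: the only point that deserves care is verifying that the $x$-derivative of $F$ is even in $y$, since it is precisely the contrast between this evenness and the oddness of the contracted vector $y^k$ that produces the asserted sign. Everything else is immediate from equation (\ref{eq.2FP}).
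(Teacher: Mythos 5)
Your proof is correct and is exactly the argument the paper intends: the paper simply declares the corollary ``obvious from the first equation in Lemma \ref{lem.PF}'', i.e.\ from $2FP = y^k\frac{\partial F}{\partial x^k}$, and your parity bookkeeping (evenness of $F$ and of $\frac{\partial F}{\partial x^k}$ in $y$, oddness of the contraction with $y^k$) is precisely the detail being left to the reader. Nothing to add.
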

 
\begin{proof}
 This is obvious from the first equation in Lemma \ref{lem.PF}.
\end{proof}

\section{The Hilbert form}
 
 We now introduce the  Hilbert $1$-form of a Finsler manifold and show its relation 
 with Hamel's condition:
\begin{definition}  
 The \emph{Hilbert $1$-form}\index{Hilbert form} on a smooth Finsler manifold $(M,F)$ is the $1$-form on $TM^0$ defined in natural coordinates as
 $$
   \omega   = \frac{\partial F}{\partial y^j} dx^j = g_{ij}(x,y)y^idx^j.
 $$
\end{definition}

From the homogeneity of $F$, we have $F(x,\xi) = \omega (x,\xi )$. Note also that $\omega (\eta) = 0$ for any
vector $\eta \in TTM^0$ that is tangent to some level set  $F(x,y) = \mathrm{const}.$ These two conditions characterize
the Hilbert form which is therefore independent of the choice of coordinates, see also \cite{Crampon2013}.
Observe also that  the length of a smooth non-singular curve $\beta : [a,b] \to M$ is 
$$
  \ell (\beta) = \int_a^b F(\tilde{\beta})= \int_{\tilde{\beta}} \omega,
$$
where $\tilde{\beta} : [a,b] \to TM^0$ is the natural lift of $\beta$.
We then have the

\begin{proposition}[\cite{Crampin2011}]
 The  smooth Finsler metric $F : T\mathcal{U} \to \r$ on the convex domain $\mathcal{U}$ is projectively flat
 if and only if the Hilbert form is $d_x$-closed, that is, if we have
 $$
   d_x \omega = \frac{\partial^2 F}{\partial x^i\partial y^j} dx^i\wedge dx^j = 0.
 $$
Equivalently, $F$ is projectively flat if and only if the Hilbert form is $d_x$-exact. This means that there exists
a function $h : T\mathcal{U}^0 \to \r$ such that 
$$
  \omega = d_xh = \frac{\partial h}{\partial x^j}dx^j.
$$
\end{proposition}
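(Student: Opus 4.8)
The plan is to read off the closedness criterion directly from Hamel's characterization (Proposition \ref{prop.hamel}), and then to obtain the exactness statement from a fibrewise Poincaré lemma, using the convexity of $\mathcal{U}$.

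First I would compute $d_x\omega$. Since $\omega = \frac{\partial F}{\partial y^j}\,dx^j$ involves only the differentials $dx^j$, the partial exterior derivative in the $x$-variables is
$$
 d_x\omega = \frac{\partial^2 F}{\partial x^i\partial y^j}\,dx^i\wedge dx^j.
$$
Because $dx^i\wedge dx^j$ is antisymmetric, this $2$-form vanishes if and only if its coefficient is symmetric in $i$ and $j$, i.e. if and only if
$$
 \frac{\partial^2 F}{\partial x^i\partial y^j} = \frac{\partial^2 F}{\partial x^j\partial y^i} \qquad (1\le i,j\le n).
$$
By the smoothness of $F$ on $T\mathcal{U}^0$ we may exchange the order of the two partial derivatives, so this is exactly condition (c) of Proposition \ref{prop.hamel}. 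The equivalence (a)$\Leftrightarrow$(c) of that proposition gives at once that $F$ is projectively flat if and only if $d_x\omega = 0$.

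Next, the equivalence between $d_x$-closedness and $d_x$-exactness. One direction is immediate: if $\omega = d_x h$ for some $h : T\mathcal{U}^0\to\r$, then $d_x\omega = d_x d_x h = 0$, since $d_x^2 = 0$ (the $y$-variables play only the role of parameters here). For the converse I would apply the Poincaré lemma fibrewise. Fixing $y\neq 0$, I regard $\omega_y := \frac{\partial F}{\partial y^j}(\cdot,y)\,dx^j$ as a $1$-form in $x$ on the convex — in particular star-shaped — domain $\mathcal{U}$. The hypothesis $d_x\omega = 0$ says precisely that each $\omega_y$ is closed, hence exact on $\mathcal{U}$: choosing a base point $x_0\in\mathcal{U}$ and setting
$$
 h(x,y) = \int_0^1 (x^j - x_0^j)\,\frac{\partial F}{\partial y^j}\bigl(x_0 + t(x-x_0),\,y\bigr)\,dt,
$$
the standard computation (integrating the identity $\frac{d}{dt}\bigl[t\,\tfrac{\partial F}{\partial y^k}(\gamma(t),y)\bigr]$ along the ray $\gamma(t)=x_0+t(x-x_0)$ and using closedness) yields $\frac{\partial h}{\partial x^k} = \frac{\partial F}{\partial y^k}(x,y)$, that is $\omega = d_x h$.

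The only point requiring a little care is that the potential $h$ be a genuine function on the slit bundle $T\mathcal{U}^0$, i.e. that it depend smoothly on $y$ as well as on $x$. This is not a serious obstacle: the integrand above depends smoothly on $(x,y)$ for $y\neq 0$ and the integration is over the compact interval $[0,1]$, so differentiation under the integral sign shows $h\in C^\infty(T\mathcal{U}^0)$, while the convexity (star-shapedness) of $\mathcal{U}$ guarantees that $h$ is globally defined. Thus the substantive content of the proposition lives in the first step — the reduction of the closedness of the Hilbert form to Hamel's condition (c) — and the exactness statement is merely a parametrized version of the Poincaré lemma.
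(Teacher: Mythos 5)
Your proposal is correct and follows essentially the same route as the paper: the closedness criterion is read off as a reformulation of Hamel's condition (c), and exactness is obtained by the standard Poincar\'e-lemma integration along rays (the paper takes the base point to be the origin, you take an arbitrary $x_0\in\mathcal{U}$, which is an immaterial difference). Your explicit remarks on the antisymmetry argument and on smoothness of the potential in $y$ are fine elaborations of steps the paper leaves implicit.
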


\begin{proof}
 The first assertion  is a mere reformulation of the Hamel Condition (c) in the previous Proposition. 
 The second assertion is proved using the same argument as in the proof of the Poincar Lemma.
 Indeed, using condition (c) from Proposition \ref{prop.hamel}, we compute:
 \begin{eqnarray*}
\frac{d}{dt}   \left(t \cdot  \omega_{(tx,y)}  \right) & = & \frac{d}{dt} \left( t\frac{\partial F}{\partial y^i}(tx,y)dx^i\right)
 \\ & = &  \frac{\partial F}{\partial y^i}(tx,y)dx^i  + t  \frac{\partial^2 F}{\partial x^k\partial y^i}x^kdx^i
 \\ & = &  \frac{\partial F}{\partial y^i}(tx,y)dx^i  + t \frac{\partial^2 F}{\partial x^i\partial y^k}x^kdx^i 
  \\ & = &  d_x \left( x^k \frac{\partial F}{\partial y^k}(tx,y)\right)
 \\ & = &  d_x   \left(\omega_{(tx,y)}(x)  \right) .
 \end{eqnarray*}
Suppose now that  $0 \in \mathcal{U}$ and set 
 $$
  h(x,y) = \int_0^1  \omega_{(tx,y)}(x) dt 
  = \int_0^1 \left(x^k \frac{\partial F}{\partial y^k}(tx,y)\right) dt ,
 $$
 then by the previous calculation we have
$$
 d_x h =   \int_0^1  d_x   \left(\omega_{(tx,y)}\right) dt =  \int_0^1  \frac{d}{dt}   \left(t \cdot  \omega_{(tx,y)}\right)  dt 
 =   \omega_{(x,y)}.
$$
\end{proof}

We shall call such a function $h$ a \emph{Hamel potential}\index{Hamel potential} of the projective Finsler metric $F$. 
Observe that a Hamel potential is well defined up to adding a function of $y$. The above proof shows that 
one can chose $h(x,y)$ to be $0$-homogenous in $y$.
This potential allows us to  compute distances.

\begin{corollary}
 The  distance $d_F$ associated to the projective Finsler metric $F$ is given by
 $$
   d_F(p,q) = h(q,q-p) - h(p,q-p),
 $$
 where $h(x,y)$ is a Hamel potential for $F$.
\end{corollary}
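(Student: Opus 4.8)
The plan is to realize the distance by the affine segment and then to recognize the integrand as a total derivative of the Hamel potential. Since $F$ is projectively flat, the affine segment
$$
  \beta(t) = p + t(q-p), \qquad t\in[0,1],
$$
is a geodesic joining $p$ to $q$; moreover, as observed just before Proposition \ref{prop.hamel}, a projectively flat metric induces a \emph{projective} distance, so this segment is length-minimizing and
$$
  d_F(p,q) = \ell(\beta) = \int_0^1 F\big(\beta(t),\, q-p\big)\, dt.
$$
The crucial structural feature is that along an affine segment the velocity $\dot\beta(t) = q-p$ is constant, so the natural lift $\tilde\beta(t) = (\beta(t), q-p)$ moves only in the base directions of $T\mathcal{U}^0$ while the fibre coordinate $y = q-p$ stays frozen.

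Next I would exploit the two defining properties of the Hamel potential. Because $\omega = d_xh$ we have $\frac{\partial h}{\partial x^j} = \frac{\partial F}{\partial y^j}$, and because $F$ is positively homogeneous of degree one in $y$ we have the Euler identity $F(x,\xi) = \omega(x,\xi) = \frac{\partial F}{\partial y^j}(x,\xi)\,\xi^j$ recorded above. Combining these along $\tilde\beta$, and using that $q-p$ does not vary with $t$, the chain rule gives
$$
  \frac{d}{dt}\, h\big(\beta(t),\, q-p\big)
  = \frac{\partial h}{\partial x^j}\big(\beta(t), q-p\big)\,(q-p)^j
  = \frac{\partial F}{\partial y^j}\big(\beta(t), q-p\big)\,(q-p)^j
  = F\big(\beta(t),\, q-p\big).
$$
Hence the integrand is an exact derivative, and integrating from $0$ to $1$ yields
$$
  d_F(p,q) = \int_0^1 \frac{d}{dt}\, h\big(\beta(t),\, q-p\big)\, dt = h(q,\, q-p) - h(p,\, q-p),
$$
which is the claimed formula.

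Finally I would note that the expression is independent of the particular Hamel potential chosen: since $h$ is determined only up to addition of a function of $y$ alone, and both terms above are evaluated at the same fibre vector $y = q-p$, this ambiguity cancels exactly, so the right-hand side is well defined. The only genuinely non-formal ingredient is the very first step, the minimality of the affine segment; this is not a property of geodesics in general, but here it follows from projective flatness, which forces the affine segments of the convex domain $\mathcal{U}$ to be globally distance-realizing. Everything after that is a one-line computation driven by the homogeneity of $F$ and the exactness $\omega = d_x h$.
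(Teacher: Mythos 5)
Your proof is correct and follows essentially the same route as the paper: realize $d_F(p,q)$ as the length of the affine segment (via projective flatness), then observe that this length is the integral of the Hilbert form $\omega = d_x h$ along the lift $\tilde\beta$ with frozen fibre coordinate $y = q-p$, which telescopes to $h(q,q-p) - h(p,q-p)$. Your chain-rule and Euler-identity computation is simply an explicit unpacking of the paper's one-line chain of equalities $\int_0^1 F(\beta,\dot\beta)\,dt = \int_{\tilde\beta}\omega = h(q,q-p)-h(p,q-p)$.
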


\begin{proof}
 Let $\beta (t) = p + t(q-p)$, ($t \in [0,1]$). Then 
 $$
    d_F(p,q) =\int_0^1 F(\beta, \dot \beta) dt = \int_{\tilde{\beta}} \omega = h(q,q-p) - h(p,q-p).
 $$
\end{proof}

\begin{example}
 If $F_f$ is the tautological Finsler structure in $\mathcal{U}$, then from the previous corollary and Equation (\ref{eq.distFFqp}) we deduce that 
 a Hamel potential is given by
 $$
  h(x,y) = - \log(F_f(x,y)).
 $$
 If one prefers a $0$-homogenous potential, then a suitable choice is 
 $$
  h(x,y) =  \log\left(\frac{F_f(p,y)}{F_f(x,y)}\right),
 $$ 
 where $p \in \mathcal{U}$  is some fixed point. 
\end{example}

\medskip

\begin{remark} A consequence of this example is that  the tautological Finsler structure in a domain $\mathcal{U}$
satisfies the following equation:
\begin{equation}\label{eq.FFxy}
 \frac{\partial F_f }{\partial x^j} = F_f \frac{\partial F_f}{\partial y^j} . 
\end{equation}
Indeed, since $h(x,y) =  - \log(F_f(x,y))$ is a Hamel potential, the Hilbert form is  
$$
    \frac{\partial F_f}{\partial y^j} dx^j =\omega = d_x h = \frac{1}{F_f}   \frac{\partial F_f}{\partial x^j} dx^j,
$$
from which (\ref{eq.FFxy}) follows imediately.  This  equation plays an important role in the study of
projectively flat metrics with constant curvature, see e.g. \cite{Berwald1929b}.
\end{remark}

\smallskip

An intrinsic discussion of  geodesics based on  the Hilbert form is given in 
\cite{Alvarez, Crampin2011,Crampin2013,Crampon2013}.

%________________
\section{Curvature in Finsler geometry}

A notion of curvature for Finsler surfaces already appeared in the beginning of the last century.
This notion was extended in all dimensions by L. Berwald in 1926  \cite{Berwald1926}.
This curvature is an  analogue of the 
sectional curvature in Riemannian geometry and it is best explained using the notion
of \emph{osculating Riemannian metric} introduced by Varga \cite{Varga1949}. See also
Auslander \cite{Auslander}  and  the book of Rund \cite[page 84]{Rund59}.

\medskip

\begin{definition}
(A) Let $(M,F)$ be a smooth manifold with a strongly convex Finsler metric and let  $(x_0,y_0) \in TM^0$. A vector field $V$
defined in  some neighborhood $\mathcal{O}\subset M$ of the point $x_0$ is said to be a \emph{geodesic extension}
of $y_0$ if $V_{x_0}= y_0$ and if  the integral curves of $V$ are geodesics of the Finsler metric $F$ (in particular
$V$ does not vanish throughout the neighborhood $\mathcal{O}$).

\smallskip

(B)  The \emph{osculating Riemannian metric}  $\g_V$ of $F$ in the direction of $V$ is the Riemannian metric on 
 $\mathcal{O}$ defined by the 
 Fundamental tensor at the point $(x,y) = (x,V_x) \in T\mathcal{O}^0$. In local coordinates we have
 $$
  \g_V = g_{ij}(x)dx^idx^j  =  g_{ij}(x,V(x))dx^idx^j   = \frac{1}{2} \frac{\partial^2F(x,V_x)}{\partial y^i\partial y^j}  \, dx^i dx^j.
 $$
\end{definition}

\medskip

Let us fix a point $(x_0,y_0) \in TM^0$ and a geodesic extension of $y_0$. We shall denote by $\Riem_V$ the $(1,3)$
Riemann curvature tensor of the osculating metric  $\g_V$. Recall that
$$
 \Riem_V(X,Y) Z = (\nabla_X\nabla_Y - \nabla_Y\nabla_X -\nabla_{[X,Y]}) Z,
$$
where $\nabla$ is the Levi-Civita connection of $\g_V$.

\medskip

\begin{definition}
 The \emph{Riemann curvature} of $\g_V$ is the field of endomorphisms ((1,1)-tensor) $\RR_V : TM \to TM$ defined as
 $$
   \RR_V (W) =  \Riem_V(W,V) V.
 $$
\end{definition}

A basic fact of Finsler geometry is the following result:
    
\begin{proposition}
  Let $(M,F)$ be a smooth manifold with a strongly convex Finsler metric and let $(x,y)$
  be a point in  $TM^0$. Then   the Riemann curvature $\RR_V$ at 
  $(x,y) \in TM^0$ is well defined independently of the choice of a geodesic extension $V$ of $y$.
\end{proposition}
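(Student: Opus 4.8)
The plan is to compute the Riemann curvature of the osculating metric explicitly in local coordinates and to show that the particular contraction $\RR_V(W) = \Riem_V(W,V)V$ depends only on the spray coefficients $G^k(x,y)$ and their derivatives at the point $(x_0,y_0)$, quantities which are intrinsic to $F$. First I would write the osculating metric as $\g_V = g_{ij}(x)\,dx^idx^j$ with $g_{ij}(x) = g_{ij}(x,V(x))$ and compute its Levi-Civita Christoffel symbols $\Gamma^k_{ij}$. Since the coordinate derivatives now act on both slots, $\frac{\partial}{\partial x^j}g_{i\ell}(x,V) = \frac{\partial g_{i\ell}}{\partial x^j}(x,V) + \frac{\partial g_{i\ell}}{\partial y^m}(x,V)\,\frac{\partial V^m}{\partial x^j}$, so $\Gamma^k_{ij}$ splits as the formal Christoffel symbols $\gamma^k_{ij}(x,V)$ plus correction terms that are linear in the first derivatives $\partial_x V$ and carry a factor $\partial_y g_{ij}$ (a Cartan-type term). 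The Riemann tensor $\Riem_V$ then involves up to second derivatives of $V$, and the goal is to prove that after the double contraction with $V$ all the a priori free derivatives of $V$ either cancel or are fixed by the geodesic equation.

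Two facts make this happen. The first is the $0$-homogeneity of $g_{ij}(x,y)$ in $y$, observed just after Lemma \ref{lem.gnonhol}, which gives $y^\ell\,\frac{\partial g_{ij}}{\partial y^\ell}=0$. The second is the geodesic constraint satisfied by any geodesic extension: because the integral curves of $V$ are geodesics, equation (\ref{eq.geod2}) yields
\[
 V^j\frac{\partial V^k}{\partial x^j} = -2G^k(x,V),
\]
so the $V$-directional derivative of $V$ is pinned down by the spray, even though the individual $\frac{\partial V^k}{\partial x^j}$ remain free. Fact one ensures that whenever a correction term in $\Gamma^k_{ij}$ is contracted against $V=y$ through an index arising from $\partial_y g$, it vanishes; fact two converts the surviving $V$-directional first derivatives of $V$, and, after differentiating the constraint once more in a transverse direction, the $V$-directional second derivatives, into $x$- and $y$-derivatives of $G^k$ evaluated at $(x,V)$.

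Carrying out the double contraction with $V$ in $\Riem_V(W,V)V$ and applying these two facts systematically, every term containing a free (transverse) derivative of $V$ drops out, and the expression collapses to $\RR_V(W)^i = R^i_{\ k}\,W^k$ with
\[
 R^i_{\ k}(x,y) = 2\frac{\partial G^i}{\partial x^k} - y^j\frac{\partial^2 G^i}{\partial x^j\partial y^k} + 2G^j\frac{\partial^2 G^i}{\partial y^j\partial y^k} - \frac{\partial G^i}{\partial y^j}\frac{\partial G^j}{\partial y^k},
\]
evaluated at $(x_0,y_0)$. Since the spray coefficients $G^k(x,y)$ are determined by $F$ alone, this formula makes no reference whatsoever to the extension $V$, which proves the proposition.

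The main obstacle is the bookkeeping in this cancellation: one must track precisely which combinations of $\partial_x V$ and $\partial_x\partial_x V$ enter the curvature of $\g_V$ and verify that exactly those not fixed by the geodesic constraint disappear. The verification rests entirely on the repeated use of the homogeneity relation $y^\ell\frac{\partial g_{ij}}{\partial y^\ell}=0$ together with the geodesic equation and its differentiated consequence; arranging the index contractions so that the free derivative terms match and cancel is where the calculation is delicate, but no idea beyond these two ingredients is required.
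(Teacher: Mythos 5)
Your proposal is correct and takes essentially the same approach as the paper: both arguments rest on the coordinate formula $R^i_{\ k}=2\frac{\partial G^i}{\partial x^k}-y^j\frac{\partial^2 G^i}{\partial x^j\partial y^k}+2G^j\frac{\partial^2 G^i}{\partial y^j\partial y^k}-\frac{\partial G^i}{\partial y^j}\frac{\partial G^j}{\partial y^k}$ for the curvature of the osculating metric, followed by the remark that the spray coefficients $G^i$ are determined by $F$ alone and hence make no reference to $V$. The only difference is that the paper cites the literature (Lemma 6.1.1 and Proposition 6.2.2 in Shen's \emph{Lectures on Finsler geometry}) for this formula, whereas you outline the chain-rule, homogeneity and geodesic-equation computation that establishes it.
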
   
    
\begin{proof} Choose some local coordinate system and let us write $R^i_{\,\,\,k}(x,y)$ for the components
of the tensor $\RR_V$: 
$$
  \RR_V = R^i_{\,\,\,k}(x,y)  \, dx^k \otimes \frac{\partial}{\partial x^i}.
$$
Then we have the formula 
\begin{equation}\label{eq.Rik}
 R^i_{\,\,\,k}  =
 2\frac{\partial G^i}{\partial x^k}-\frac{\partial^2 G^i}{\partial x^j\partial y^k}y^j
 +2G^j \frac{\partial^2G^i}{\partial  y^j\partial y^k}
 -\frac{\partial G^i}{\partial y^j}\frac{\partial  G^j}{\partial y^k},
\end{equation}
where the $G^i= G^i(x,y)$ are the spray coefficients of $F$. 
We refer to Lemma 6.1.1 and Proposition 6.2.2 in \cite{Shen2001b} for a proof. Formula
(\ref{eq.Rik}) is also obtained in  \cite[page 43]{ChernShen}, where it is seen as
a consequence of the structure equations for the Chern connection,
see also  \cite[Proposition 8.4.3]{Shen2001a}.
Since the spray coefficients $G^i(x,y)$ only 
depend  on the fundamental tensor and its partial derivatives, it follows that
$R^i_{\,\,\,k} $ depends only on  $(x,y) = (x,V_x) \in TM^0$ and not on the
choice of a geodesic field extending $y$.
\end{proof}

This Proposition implies that  we can write 
 $$
   \RR_y  = \RR_V = R^i_{\,\,\,k}(x,y)  \, dx^k \otimes \frac{\partial}{\partial x^i}
 $$   
for the Riemann curvature at a point $(x,y) \in TM^0$, where $V$ is an arbitrary geodesic extension of $y$.
We then have the following important 
\begin{corollary}
 Let $\sigma \subset T_{x}M$ be a 2-plane containing the non-zero vector $y \in T_{x}M$. 
 Choose a local geodesic extension $V$  of  $y$, then the sectional curvature    
 $\K_{\g_V}(\sigma)$  of $\sigma$  for the osculating Riemannian metric $\g_V$
 is independent of the choice of $V$.
\end{corollary}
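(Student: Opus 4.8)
The plan is to write the sectional curvature purely in terms of the curvature endomorphism $\RR_y$ and the pointwise value of the fundamental tensor, both of which the preceding Proposition and the very definition of the osculating metric exhibit as independent of the geodesic extension $V$.

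First I would fix the point $x_0$ with $V_{x_0}= y$ and choose a vector $w\in T_{x_0}M$ completing $y$ to a basis of $\sigma$. I then recall the Riemannian expression for the sectional curvature of $\sigma$ with respect to $\g_V$,
$$
 \K_{\g_V}(\sigma) = \frac{\g_V(\Riem_V(w,V)V,\, w)}{\g_V(V,V)\,\g_V(w,w) - \g_V(V,w)^2},
$$
all terms being evaluated at the point $x_0$. Since $V_{x_0}= y$, the numerator is exactly $\g_V(\RR_V(w),w) = \g_V(\RR_y(w),w)$, because $\RR_V(w)=\Riem_V(w,V)V$ reduces to $\Riem_V(w,y)y$ at $x_0$.

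Next I would make two observations. The endomorphism $\RR_y(w)$ appearing in the numerator is independent of the choice of $V$ by the Proposition above. And the pointwise value of the osculating metric $\g_V$ at $x_0$ equals $g_{ij}(x_0,y)$ by the definition of $\g_V$ together with $V_{x_0}=y$; in particular it too is independent of $V$. Consequently every quantity entering both numerator and denominator depends only on $(x_0,y)$ and on $w$, so the quotient $\K_{\g_V}(\sigma)$ does not depend on $V$.

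The only real subtlety — and thus the step I would flag as the main point — is that the sectional curvature is a second-order invariant, so a priori it could depend on the full $1$-jet of $V$ at $x_0$ and not merely on the value $V_{x_0}=y$. This potential dependence is precisely what the preceding Proposition removes, by showing that $\RR_V$ at $(x,y)$ is unchanged under a change of geodesic extension. Once that is granted, the corollary is pure bookkeeping, since the denominator involves only the value of $\g_V$ at the single point $x_0$, which is manifestly $g_{ij}(x_0,y)$.
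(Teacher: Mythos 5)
Your proof is correct and follows essentially the same route as the paper: both write the Riemannian sectional curvature of $\g_V$ in the form $\K_{\g_V}(\sigma) = \g_V(\RR_V(w),w)\big/\bigl(\g_V(y,y)\g_V(w,w)-\g_V(y,w)^2\bigr)$ and invoke the preceding Proposition to conclude independence of $V$. Your version is in fact slightly more explicit than the paper's, since you spell out that the denominator involves only the pointwise value $g_{ij}(x_0,y)$ of the osculating metric, which depends on $(x_0,y)$ alone, and you correctly identify this pointwise evaluation plus the Proposition as the two ingredients that kill any dependence on the $1$-jet of $V$.
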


\begin{proof}
By definition of the sectional curvature in Riemannian geometry, we have
 \begin{equation} \label{Flag1}
  \K_{\g_V}(\sigma) = \frac{\g_V(\RR_V(W), W)}{\g_V(y,y)\g_V(W,W)-\g_V(y,W)^2},
\end{equation}
where $W\in \sigma$ and  $y = V_x$ are linearly independent (and thus $\sigma = \mbox{span} \{V,W\}$), and 
 $\RR_V$ is the  Riemann curvature    of $\g_V$.  
This quantity is independent of the geodesic extension $V$, by the previous Proposition.
\end{proof}
  
 \medskip
 
\begin{definition} 
 The pair $(y,\sigma)$ with $0 \neq y \in \sigma$  is called a \emph{flag} in $M$, and $\K_{\g_V}(\sigma)$ 
 is called the \emph{flag curvature}\index{flag curvature} of $(y,\sigma)$, and denoted by $\K(y,\sigma)$. 
 The vector $y\in \sigma$ is sometimes suggestively called the \emph{flagpole}  of the flag.
\end{definition}
 
In local coordinates, the flag curvature can be written as
 \begin{equation} \label{Flag2}
   \K(y,\sigma)  
  =   \frac{R_{mk}(x,y) \, w^kw^m}{F^2(x,y)g_{ij}(x,y)w^i w^j - (g_{rs}(x,y)w^ry^s)^2},
\end{equation}
where  $W = w^k\frac{\partial}{\partial x^k} \in \sigma$ and  $y$ are linearly independent and  
$R_{mk} = g_{im}R^i_{\,\,\,k}$.

\medskip 

\begin{example}
The flag curvature of a Minkowski metric is zero. Indeed, the fundamental tensor is constant and 
coincides with any osculating metric which is thus flat.
Note that conversely, there are many examples of  Finsler metrics with vanishing
flag curvature which are not locally isometric to a Minkowski metric.
The first example has been given in \cite[section 7]{Berwald1929}.
\end{example}

We also have a notion of Ricci curvature:

\begin{definition}
 The \emph{Ricci curvature} of the Finsler metric $F$ at $(x,y) \in TM^0$ is defined as
 $$
  \mathrm{Ric}(x,y) =  \mbox{Trace}(\RR_y) =F^2(x,y)\cdot \sum_{i=2}^n\K(y,\sigma_i) 
 $$
 where $e_1, e_2, \dots, e_n \in T_xM$  is an orthonormal basis relative to the
 inner product $\g_y$ \, such that $e_1=\frac{y}{F(x,y)}$.
\end{definition}

\medskip

The geometric meaning of the Finslerian flag curvature presents both similarities and striking differences
with its Riemannian counterpart. 
The Riemann curvature  $\RR_y$ plays an important role in the Finsler literature, it appears naturally in the second variation
formula for the length of geodesics and in the theory of Jacobi fields (see \cite[Lemma 6.1.1]{Shen2001b} 
or \cite[chap. IV.4 and IV.5]{Rund59}). This leads to natural formulations of comparison theorems in
Finsler geometry  that are similar to their Riemannian counterparts. In particular we have

\medskip

\begin{itemize}
  \item In 1952, L. Auslander proved a Finsler version of the Cartan-Hadamard Theorem. If the Finsler Manifold $(M,F)$ is forward complete and has non-positive
  Flag curvature, then the exponential map is a covering map \cite{Auslander, Moalla}.
  \item Auslander also proved a Bonnet-Myers Theorem: A forward complete Finsler manifold $(M,F)$ with Ricci curvature $\mbox{Ric}(x,y) \geq (n-1)F^2(x,y)$  for all $(x,y) \in TM^0$ is compact with diameter $\leq \pi$, see  \cite{Auslander, Moalla}.  
  \item In 2004, H.-B. Rademacher proved a sphere theorem: 
   a compact,  simply-connected Finsler manifold of dimension $n \geq 3$ such that $F(p,-\xi) \leq \lambda F(p,\xi)$ 
  for any $(p,\xi) \in TM$ and   with flag curvature satisfying 
  $$
   \left(1-\frac{1}{1+\lambda}\right)^2 < \K \leq 1
  $$
is homotopy equivalent (and thus homeomorphic) to a sphere  \cite{Rademacher2004} (see also  \cite{Dazord} for an earlier result in this direction).
Note that if $F$ is reversible, i.e. $\lambda = 1$, then we have the analog of the familiar  $\frac{1}{4}-$pinching sphere theorem of Riemannian geometry.
\end{itemize}
 
We also mention the  Finslerian version of the Schur Lemma: 

\begin{lemma}[The Schur Lemma]\index{Shur lemma}\index{lemma!Schur}
 Let $(M,F)$ be a smooth manifold of dimension $n \geq 3$ with a strongly convex Finsler metric. 
 Suppose that at each point $p\in M$ the flag curvature is independent of the flag, that is, 
 $$
   \K(y,\sigma) = \kappa (p),
 $$
 for every flag $(y,\sigma)$ at $p$, where $\kappa : M \to \r$ is an arbitrary function. Then 
 $\K$ is  a constant,
 that is $\kappa$ is  independent of $p$.
 \end{lemma}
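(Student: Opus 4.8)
The plan is to adapt the classical proof of the Riemannian Schur lemma, whose engine is the second Bianchi identity, to the Finsler setting by working with the osculating Riemannian metrics $\g_V$ introduced above.

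First I would put the hypothesis into an algebraic normal form for the Riemann curvature. Because $\RR_y$ is the Riemann curvature endomorphism of an osculating metric, it is self-adjoint with respect to $\g_y$ and annihilates $y$. Substituting $\K(y,\sigma)=\kappa(p)$ into formula (\ref{Flag2}) and polarizing in the transverse flag vector $W$ shows that
\begin{equation*}
 \g_y(\RR_y(W),W) = \kappa(p)\bigl( F^2(x,y)\,\g_y(W,W) - \g_y(y,W)^2\bigr)
\end{equation*}
for every $W$, hence that in natural coordinates the curvature has the isotropic form
\begin{equation*}
 R^i_{\,\,\,k}(x,y) = \kappa(x)\bigl( F^2(x,y)\,\delta^i_k - y^i y_k\bigr),
 \qquad y_k = g_{km}(x,y)\,y^m .
\end{equation*}
Observe that $\kappa$ depends a priori only on $x$ and that the dimension hypothesis has not yet entered.

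Next I would differentiate this normal form and feed it into a Bianchi-type identity. Fixing $(x_0,y_0)$ and a geodesic extension $V$ of $y_0$, the operators $\RR_{V_x}$ are the direction-$V$ Riemann curvatures of $\g_V$, so the second Bianchi identity associated with $\g_V$ furnishes a differential constraint on the right-hand side above. Performing the same double contraction as in the Riemannian computation — contracting the cyclic Bianchi sum twice against the osculating metric — collapses every $\kappa\,(F^2\delta - y\otimes y)$ block and leaves a factor $(n-1)(n-2)$ multiplying the derivative of $\kappa$. Since $n\geq 3$ makes $(n-1)(n-2)\neq 0$, that derivative must vanish; as the base point (and, if necessary, the flagpole $y_0$) is arbitrary, this gives $d\kappa=0$, i.e. $\kappa$ is constant.

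The main obstacle is that, unlike in Riemannian geometry, $\RR_y$ genuinely depends on the flagpole $y$, so differentiating the normal form generates extra vertical terms (derivatives in $y$) with no Riemannian counterpart; correspondingly, the Finslerian second Bianchi identity for the $hh$-curvature carries correction terms involving the $hv$-curvature and the Landsberg tensor. The crux is therefore to write this identity down correctly — most transparently through the Chern connection, or by differentiating formula (\ref{eq.Rik}) for $R^i_{\,\,\,k}$ in terms of the spray — and to verify, using the $0$-homogeneity of $R^i_{\,\,\,k}$ in $y$ together with the explicit form $R^i_{\,\,\,k}=\kappa(F^2\delta^i_k - y^iy_k)$, that these corrections drop out of the double contraction. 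Once that is secured, the contraction and the dimension count finish the argument exactly as in the Riemannian case.
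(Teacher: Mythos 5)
The paper does not actually prove this lemma: it states it and refers to \cite{BCS} (Lemma 3.10.2) for the proof, so your proposal can only be measured against that standard argument, which is indeed a Bianchi-identity argument. Your first step is correct and is how that argument begins: since $\RR_y$ is self-adjoint with respect to $\g_y$ and annihilates $y$, polarizing the hypothesis $\K(y,\sigma)=\kappa(p)$ in formula (\ref{Flag2}) yields the isotropic normal form $R^i_{\,\,\,k}=\kappa(x)\bigl(F^2\delta^i_k-y^iy_k\bigr)$, with $y_k=g_{km}(x,y)y^m$.

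The gap is in the second step, and it is twofold. First, the step you yourself call ``the crux'' --- writing down the correct Finslerian second Bianchi identity and verifying that the $hv$-curvature and Landsberg correction terms drop out of the double contraction --- is precisely the entire content of the Finslerian Schur lemma; deferring that verification means the proof has not been given. Second, the vehicle you propose, namely the genuine Riemannian second Bianchi identity of an osculating metric $\g_V$, would fail as stated. The hypothesis $\K(y,\sigma)=\kappa(p)$ controls only the sectional curvatures of $\g_V$ on $2$-planes containing the flagpole $V_x$; for planes not containing $V_x$ the sectional curvature of $\g_V$ is \emph{not} a flag curvature of $F$ and is not pinned down at all. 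Consequently the full Riemann tensor $\Riem_V$ does not acquire the constant-curvature normal form, and ``the same double contraction as in the Riemannian computation'' has nothing to act on. Moreover, $\Riem_V$ --- unlike the directional curvature $\RR_V=\Riem_V(\cdot,V)V$, which the paper's Proposition in Section 9 shows to be independent of the extension --- does depend on the choice of the geodesic extension $V$, so any identity extracted from it would first have to be shown to be choice-independent. The proof the paper points to sidesteps the osculating metric entirely: one works on the slit tangent bundle $TM^0$ with the Chern connection, where an exact second Bianchi identity (with its correction terms) is available, inserts the normal form above, and contracts; that is where $n\geq 3$ enters and where the corrections are seen to cancel. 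In short, you have identified the right strategy and correctly located the obstacle, but the obstacle \emph{is} the theorem.
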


\medskip

We refer to \cite[Lemma 3.10.2]{BCS} for a proof. This result is already stated without proof in 
the work of Berwald (see the footnote on page 468 in  \cite{Berwald1929}).

\medskip

Finally we should warn the reader that  unlike the situation in Riemannian geometry, the flag curvature does not control the purely metric notions of curvature such as the notions  of nonpositive (or non positive) curvature in the sense of  Busemann or Alexandrov.
In particular we shall prove below that Hilbert geometries
have constant negative flag curvature, yet they satisfy the Busemann or Alexandrov curvature condition if and only if the convex 
domain is an ellipsoid, see \cite{RGuo2013}.

\section{The  curvature of  projectively flat Finsler metrics} \label{sec.curvPflat}

The Riemann curvature of a general projectively flat Finsler metric was computed by Berwald in \cite{Berwald1929}.
We can state the result in the following form:

\medskip 

\begin{theorem}
 Let $F(x)$ be a strongly convex projectively flat Finsler metric on some domain $\mathcal{U}$ of $\r^n$. Then the Riemann curvature at
 a point $(x,y) \in T\mathcal{U}^0$ is given by
\begin{equation}\label{eq.Ryplat}
 \RR_{y} = \Sc (x,y)  \PP_{y^{\perp}},
\end{equation}
where   
\begin{equation}\label{eq.Rscal}
  \Sc (x,y) =  \left(P^2 - y^j\tfrac{\partial P}{\partial x^j}  \right)
\end{equation}
and $\PP_{y^{\perp}} : T_x\mathcal{U} \to T_x\mathcal{U}$ is  the orthogonal projection onto the hyperplane $y^{\perp} \subset T_x\mathcal{U}$
 with respect to the fundamental tensor $\g_y$ at $(x,y)$.
\end{theorem}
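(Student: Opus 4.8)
The plan is to substitute the projectively-flat form of the spray into the curvature formula (\ref{eq.Rik}) and then exploit the symmetries of $\RR_y$ to finish, rather than grinding out a direct identity. By the earlier proposition, projective flatness gives $G^k(x,y) = P(x,y)\,y^k$; since every spray coefficient is $2$-homogeneous in $y$ and $y^k$ is $1$-homogeneous, the projective factor $P$ is homogeneous of degree $1$. Consequently $\partial P/\partial y^k$ is $0$-homogeneous and $\partial P/\partial x^j$ is $1$-homogeneous in $y$. First I would plug $G^i = P\,y^i$ into the four terms of (\ref{eq.Rik}), differentiating by the product rule, and repeatedly apply Euler's identity (as in the proof of Lemma \ref{lem.gnonhol}) to discard $y^j\,\partial^2 P/\partial y^j\partial y^k = 0$ and to reduce $y^k\,\partial P/\partial y^k = P$. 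The outcome of this (routine but delicate) computation is
$$
 R^i_{\,\,\,k} = \Sc(x,y)\,\delta^i_k + \tau_k\, y^i, \qquad
 \tau_k = 2\frac{\partial P}{\partial x^k} - y^j\frac{\partial^2 P}{\partial x^j\partial y^k} - P\frac{\partial P}{\partial y^k},
$$
with $\Sc = P^2 - y^j\,\partial P/\partial x^j$ as in (\ref{eq.Rscal}).

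It then remains to identify the covector $\tau_k$ with $-\Sc\, g_{kl}y^l/F^2$, which is precisely the claim $\RR_y = \Sc\,\PP_{y^{\perp}}$: the $\g_y$-orthogonal projection sends $w$ to $w - \g_y(w,y)\,y/\g_y(y,y)$, so in components $(\PP_{y^{\perp}})^i_{\,\,\,k} = \delta^i_k - g_{kl}y^l y^i/F^2$, using $\g_y(y,y) = F^2$ from Lemma \ref{lem.gnonhol}. Instead of verifying this by a direct computation with Lemma \ref{lem.PF}, I would proceed structurally. Contracting with $y^k$ and using homogeneity ($y^k y^j\,\partial^2 P/\partial x^j\partial y^k = y^j\,\partial P/\partial x^j$ and $y^k\,\partial P/\partial y^k = P$) gives
$$
 \tau_k\, y^k = y^k\frac{\partial P}{\partial x^k} - P^2 = -\Sc,
$$
so that $R^i_{\,\,\,k}\,y^k = (\Sc + \tau_k y^k)\,y^i = 0$; the flagpole lies in the kernel of $\RR_y$, as it must since $\RR_y(y) = \Riem_V(y,V)V = 0$.

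The key remaining point is self-adjointness. As $\RR_y$ is the Riemann curvature operator $W \mapsto \Riem_V(W,V)V$ of the osculating metric $\g_V$ at $x$, it is self-adjoint with respect to $\g_y$ by the usual symmetry $\Riem_V(W,V,V,W') = \Riem_V(W',V,V,W)$. Writing $\RR_y = \Sc\,\mathrm{Id} + y\otimes\tau$, self-adjointness forces $\tau(w)\,\g_y(y,w') = \tau(w')\,\g_y(y,w)$ for all $w,w'$, so $\tau_k = \lambda\, g_{kl}y^l$ for a scalar $\lambda$ (here $\g_y(y,\cdot) \neq 0$ since $\g_y(y,y) = F^2 > 0$). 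Contracting this with $y^k$ and combining $\tau_k y^k = -\Sc$ with $g_{kl}y^l y^k = F^2$ gives $\lambda = -\Sc/F^2$, whence $\tau_k = -\Sc\, g_{kl}y^l/F^2$ and
$$
 R^i_{\,\,\,k} = \Sc\left(\delta^i_k - \frac{g_{kl}y^l y^i}{F^2}\right) = \Sc\,(\PP_{y^{\perp}})^i_{\,\,\,k},
$$
as desired. The main obstacle is the bookkeeping in the first step, namely tracking all the homogeneity cancellations in (\ref{eq.Rik}); the self-adjointness argument is what lets me avoid the more painful direct identity $\tau_k = -(\Sc/F)\,\partial F/\partial y^k$, which would otherwise have to be extracted from Lemma \ref{lem.PF}.
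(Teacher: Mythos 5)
Your proposal is correct and is essentially the paper's own proof: both substitute $G^i = P\,y^i$ into (\ref{eq.Rik}), use Euler homogeneity to reach $R^i_{\,\,\,k} = \Sc\,\delta^i_k + \tau_k\,y^i$ together with the contraction identity $y^k\tau_k = -\Sc$, and then determine $\tau_k = -\Sc\, g_{kl}y^l/F^2$ from the symmetry $R_{mk}=R_{km}$, which is exactly the $\g_y$-self-adjointness of $\RR_y$ that you invoke (the paper calls this step ``a trick'' and leaves its justification via the osculating Riemannian metric implicit, while you spell it out). Your explicit expression for $\tau_k$ is in fact the correct one (the paper's displayed formula for $\Tau_k$ contains typographical slips that play no role in the argument), so the two proofs differ only in presentation, not in substance.
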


\begin{proof}
We basically follow  the proof in  \cite[page 110]{ChernShen}. 
Since $F$ is projectively flat, we have $G^i = P(x,y)y^i$ and equation (\ref{eq.Rik})   gives
$$
   \RR_y   = R^i_{\,\,\,k}(x,y)  \, dx^k \otimes \frac{\partial}{\partial x^i}.
$$
with
\begin{equation*}
 R^i_{\,\,\,k}  =
 2\frac{\partial (Py^i)}{\partial x^k}-\frac{\partial^2 (Py^i)}{\partial x^j\partial y^k}y^j
 +2(Py^j) \frac{\partial^2(Py^i)}{\partial  y^j\partial y^k}
 -\frac{\partial (Py^i)}{\partial y^j}\frac{\partial  (Py^j)}{\partial y^k}.
\end{equation*}
Using the homogeneity in $y$ for $P(x,y)$ and $\frac{\partial y^i}{\partial y^k} = \delta^i_k$, we calculate
that
$$
 R^i_{\,\,\,k}  =  \Sc \, \delta^i_k  + \Tau_k y^i,
$$
where
$
  \Sc  =  \left(P^2 - y^j\frac{\partial P}{\partial x^j}  \right),
$
and
\begin{eqnarray*}
  \Tau_k  
     & = &
  2P\frac{\partial P}{\partial y^k}   - \frac{\partial P}{\partial y^k}  - \frac{\partial^2 P}{\partial y^k\partial x^j}y^iy^k
 +
  3\left(\frac{\partial P}{\partial x^k}  - P  \frac{\partial P}{\partial y^k} \right)
  \\
  & = & \frac{\partial \Sc}{\partial y^k} + 
  3\left(\frac{\partial P}{\partial x^k}  - P  \frac{\partial P}{\partial y^k} \right).
\end{eqnarray*}

 Observe that
\begin{equation}\label{eq.SL1}
  y^k \Tau_k  = - \Sc,
\end{equation}
therefore
$
 R^i_{\,\,\,k}  =  \Sc\delta^i_k  + \Tau_k y^i,
$
which we write as
$$
 R_{mk} = g_{mi} R^i_{\,\,\,k}  =  g_{mi} ( \Sc\delta^i_k  + \Tau_k y^i)
 =  \Sc g_{mk}  + g_{mi}y^i \Tau_k .
$$
We shall compute $\Tau_m$ using a trick. From the symmetry 
$R_{mk} = R_{km}$ we find
$$
 0 = (R_{mk} - R_{km}) = 
   (g_{ki} - g_{mi})y^i \Tau_m,
$$  
therefore
$
   g_{mi}y^i \Tau_k  = g_{ki}y^i \Tau_m.
$   
Using (\ref{eq.SL1}), one gets
\begin{equation}\label{eq.SL2}
  g_{mi}y^i  \Sc  =
    - g_{mi}y^i y^k\Tau_k  = - g_{ki}y^iy^k \Tau_m =  - F^2 \Tau_m,
\end{equation}
that is
$$
 \Tau_k = - \Sc (x,y) \frac{g_{kj}(x,y)}{F^2(x,y)} \, y^j.
$$
Let  $\PP_y : T_x\mathcal{U} \to T_x\mathcal{U}$ denotes the orthogonal projection 
on the line $\r y \subset T_xM$, then
we have
$$
 \PP_y\left(\frac{\partial}{\partial x^k}\right) = \frac{\g_y(\partial_k,y)}{\g_y(y,y)}\cdot y =   
 \frac{g_{kj}(x,y) \, y^j}{F^2(x,y)}\cdot y
  =\frac{ \Tau_ky^i}{\Sc}\,   \frac{\partial}{\partial x^i},
$$ 
and finally
$$
 \RR_y = \Sc (x,y) \cdot \left(\mathrm{Id} -\PP_y\right) = \Sc (x,y) \cdot \PP_{y^{\perp}}.
$$
\end{proof}

\begin{remark}
The  coefficient $\Tau_k$ is also expressed as 
$
 \Tau_k = - \frac{\Sc}{F} \frac{\partial F}{\partial y^k}.
$
This is equivalent to (\ref{eq.SL2})  since we have 
 by homogeneity
$
 F\frac{\partial F}{\partial y^k} = g_{ki}y^i.
$ 
\end{remark}

\begin{corollary}[Berwald \cite{Berwald1929}] \label{cor.berwald1}
 The flag curvature of a projectively  flat strongly convex Finsler metric $F$ at a point $(x,y) \in TM^0$ is given by
 \begin{equation}\label{eq.Kprojflat}
   \K(y,\sigma) = \frac{1}{F^2}\left(P^2 - y^j\frac{\partial P}{\partial x^j}  \right),
\end{equation}
 where $P = P(x,y)$ is the projective factor and $\sigma \subset TM$ is an arbitrary $2$-plane containing $y$. 
 The Ricci curvature of a projectively Finsler metric $F$   is given by
 \begin{equation}\label{eq.Ricprojflat}
 \mathrm{Ric}(x,y) =  (n-1) \, \left(P^2 - y^j\frac{\partial P}{\partial x^j}  \right).
\end{equation}
\end{corollary}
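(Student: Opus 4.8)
The plan is to substitute the expression $\RR_y = \Sc(x,y)\,\PP_{y^\perp}$ supplied by the preceding Theorem into the definition (\ref{Flag1}) of the flag curvature and let the structure of the orthogonal projection collapse the quotient to a scalar. Fix a vector $W$ so that $(y,W)$ spans the flag $\sigma$, and abbreviate the denominator in (\ref{Flag1}) by
$$
 Q = \g_y(y,y)\,\g_y(W,W) - \g_y(y,W)^2 .
$$
First I would evaluate the numerator. Writing the projection onto the hyperplane as $\PP_{y^\perp} = \mathrm{Id} - \PP_y$, with $\PP_y$ the $\g_y$-orthogonal projection onto $\r y$, and using $\PP_y(W) = \tfrac{\g_y(W,y)}{\g_y(y,y)}\,y$, a direct computation gives
$$
 \g_y(\RR_y(W),W) = \Sc\,\g_y(\PP_{y^\perp}(W),W) = \Sc\left(\g_y(W,W) - \frac{\g_y(W,y)^2}{\g_y(y,y)}\right) = \frac{\Sc}{\g_y(y,y)}\,Q .
$$
The key observation is that this is exactly $\Sc/\g_y(y,y)$ times the denominator $Q$.

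Dividing by $Q$ and invoking Lemma \ref{lem.gnonhol}, which identifies $\g_y(y,y) = F^2(x,y)$, I obtain
$$
 \K(y,\sigma) = \frac{\g_y(\RR_y(W),W)}{Q} = \frac{\Sc(x,y)}{F^2(x,y)} = \frac{1}{F^2}\left(P^2 - y^j\frac{\partial P}{\partial x^j}\right),
$$
which is (\ref{eq.Kprojflat}); in particular the flag curvature depends only on $(x,y)$ and not on the chosen $2$-plane $\sigma$.

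For the Ricci curvature I would read off the trace of $\RR_y$ directly. Since $\PP_{y^\perp}$ is the orthogonal projection onto an $(n-1)$-dimensional subspace, $\mathrm{Trace}(\PP_{y^\perp}) = n-1$, hence $\mathrm{Ric}(x,y) = \mathrm{Trace}(\RR_y) = (n-1)\,\Sc(x,y)$, which is (\ref{eq.Ricprojflat}). Equivalently, because the flag curvature just computed is independent of $\sigma$, the defining sum $\mathrm{Ric}(x,y) = F^2\sum_{i=2}^n \K(y,\sigma_i)$ consists of $n-1$ identical terms each equal to $\Sc/F^2$, again giving $(n-1)\,\Sc$.

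There is essentially no genuine obstacle here: the entire content is the algebraic cancellation above, whose only subtle ingredients are the recognition that the projection reproduces the denominator $Q$ up to the factor $\g_y(y,y)$, and the identity $\g_y(y,y) = F^2$ from Lemma \ref{lem.gnonhol}. The fact that $\K$ turns out to be independent of $\sigma$ is the pointwise shadow of the Schur Lemma recorded earlier, and it is forced here purely by the orthogonal-projection form of $\RR_y$.
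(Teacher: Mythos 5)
Your proposal is correct and follows essentially the same route as the paper's own proof: substitute the expression $\RR_y = \Sc(x,y)\,\PP_{y^{\perp}}$ from the preceding theorem into the flag curvature formula, observe that the numerator reproduces $\Sc/\g_y(y,y)$ times the denominator, identify $\g_y(y,y)=F^2$, and obtain the Ricci formula from $\mathrm{Trace}(\PP_{y^{\perp}})=n-1$. The only incidental difference is that your numerator $\g_y(\RR_y(W),W)$ is written consistently with (\ref{Flag1}), whereas the paper's proof contains a typographical slip ($\g_y(\RR_y(\mathrm{w}),y)$ in place of $\g_y(\RR_y(\mathrm{w}),\mathrm{w})$).
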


\begin{proof}
From the previous proposition  we have for any $\mathrm{w} \in T_x\mathcal{U}$:
\begin{equation*}\label{eq.RScal}
 \RR_{y}(\mathrm{w}) =  \Sc (x,y) \left(\mathrm{w} - \frac{\g_y(\mathrm{w},y)}{\g_y(y,y)}y  \right)
 = \frac{\Sc (x,y)}{\g_y(y,y)} \left(\g_y(y,y) \mathrm{w} -  \g_y(\mathrm{w},y) y  \right)
\end{equation*}
 where  $\Sc (x,y) =  \left(P^2 - y^j\frac{\partial P}{\partial x^j}  \right)$.
Let $\sigma\subset T_x\mathcal{U}$ be a $2$-plane containing $y$ and choose a vector
 $\mathrm{w} \in T_x\mathcal{U}$ such that 
$\sigma = \text{span}(y,\mathrm{w})$, then
\begin{align*}
\K(y,\sigma)  &= 
\frac{\g_y(\RR_y(\mathrm{w}),y)}{\g_y(y,y)\g_y(\mathrm{w},\mathrm{w})- \g_y(\mathrm{w},y)^2}  
=  \frac{\Sc (x,y)}{\g_y(y,y)} =  \frac{\Sc (x,y)}{F^2(x,y)}.
\end{align*}
For the Ricci curvature we have
 $$
  \mathrm{Ric}(x,y) =   \mbox{Trace}(\RR_y) =  \Sc (x,y) \,  \mbox{Trace}(\PP_{y^{\perp}})
  =  (n-1)\Sc (x,y).
 $$
\end{proof}

\medskip

\begin{remark}\label{Kscal}
Observe in particular that the flag curvature of a
projectively flat Finsler metric at a point $(x,y) \in TM^0$ depends only on 
$(x,y)$ and not on the $2$-plane $\sigma  \in TM^0$. Such metrics are 
said to be \emph{of scalar flag curvature}, because the flag curvature
is given by a scalar function
$
 \K : TM^0 \to \r.
$
In that case we write the flag curvature as 
$$
  \K(x,y) = \K(y, \sigma).
$$
\end{remark}

If the flag curvature is independent of $y$, then it is in fact constant:
 
\begin{proposition}
 Let $(M,F)$ be a smooth manifold with a strongly convex Finsler metric.  
 If the flag curvature  $\K(x,y)$ is independent of $y\in T_xM$ for any $x\in M$, 
 then the flag curvature is actually a constant.
\end{proposition}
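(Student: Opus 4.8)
The plan is to work inside the projectively flat setting of this section, where Corollary \ref{cor.berwald1} gives $\K(x,y) = \Sc(x,y)/F^2(x,y)$ with $\Sc = P^2 - y^j\tfrac{\partial P}{\partial x^j}$ as in \eqref{eq.Rscal}. The hypothesis that $\K$ is independent of $y$ then reads $\Sc(x,y) = \lambda(x)\,F^2(x,y)$ for a function $\lambda$ depending only on $x$, and the goal becomes to show that $\lambda$ is locally constant (hence constant on the connected manifold). The whole strategy is to differentiate this relation and exploit the homogeneity in $y$ together with Hamel's condition in order to force $\nabla_x\lambda$ to vanish.

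First I would extract a first-order identity for the projective factor. Comparing the two expressions for the coefficient $\Tau_k$ obtained in the proof of the curvature Theorem, namely $\Tau_k = \tfrac{\partial\Sc}{\partial y^k} + 3\bigl(\tfrac{\partial P}{\partial x^k} - P\tfrac{\partial P}{\partial y^k}\bigr)$ and $\Tau_k = -\tfrac{\Sc}{F}\tfrac{\partial F}{\partial y^k}$, and inserting $\Sc = \lambda F^2$ with $\tfrac{\partial\lambda}{\partial y^k}=0$ (so that $\tfrac{\partial\Sc}{\partial y^k} = 2\lambda F\tfrac{\partial F}{\partial y^k}$), all the $\lambda F\,\partial_y F$ terms collapse and one is left with the clean relation
\begin{equation*}
 \frac{\partial P}{\partial x^k} - P\frac{\partial P}{\partial y^k} = -\lambda F\frac{\partial F}{\partial y^k}. \tag{$\dagger$}
\end{equation*}

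Next I would differentiate $(\dagger)$. Differentiating in $y^m$, the right-hand side and every other surviving term becomes symmetric in $(k,m)$, which yields the auxiliary symmetry $\tfrac{\partial^2 P}{\partial x^k\partial y^m} = \tfrac{\partial^2 P}{\partial x^m\partial y^k}$. Differentiating $(\dagger)$ instead in $x^m$ and antisymmetrizing in $(k,m)$, the pure $x$-derivatives of $P$ drop by symmetry, the Hessian-of-$P$ terms cancel by the auxiliary symmetry just found, and the Hessian-of-$F$ terms cancel by Hamel's condition (c) in Proposition \ref{prop.hamel}. Using $(\dagger)$ once more together with \eqref{eq.2FPP} to rewrite the remaining products $\partial_x P\,\partial_y P$ and $\partial_x F\,\partial_y F$, all terms containing $P$ cancel and one arrives at
\begin{equation*}
 \frac{\partial\lambda}{\partial x^m}\frac{\partial F}{\partial y^k} = \frac{\partial\lambda}{\partial x^k}\frac{\partial F}{\partial y^m}\qquad(1\le k,m\le n). \tag{$\star$}
\end{equation*}
I expect this cancellation to be the main obstacle: it is a delicate piece of bookkeeping, and getting every non-$\lambda$ term to disappear is exactly where the projectively flat identities $(\dagger)$ and \eqref{eq.2FPP} must be combined in just the right way.

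Finally I would conclude from $(\star)$. Contracting with $y^k$ and invoking the homogeneity relation $y^k\tfrac{\partial F}{\partial y^k}=F$ gives $F\,\tfrac{\partial\lambda}{\partial x^m} = \bigl(y^k\tfrac{\partial\lambda}{\partial x^k}\bigr)\tfrac{\partial F}{\partial y^m}$. Suppose $\nabla_x\lambda(x_0)\neq 0$ at some point $x_0$; since $n\ge 2$ one may choose a nonzero $y$ with $y^k\tfrac{\partial\lambda}{\partial x^k}(x_0)=0$, and then the identity forces $F(x_0,y)\,\tfrac{\partial\lambda}{\partial x^m}(x_0)=0$ for all $m$. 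As $F(x_0,y)>0$ for $y\neq 0$, this contradicts $\nabla_x\lambda(x_0)\neq 0$. Hence $\nabla_x\lambda\equiv 0$, so $\lambda$, and therefore $\K=\lambda$, is constant on the connected manifold $M$. For $n\ge 3$ one could alternatively deduce the result directly from the Finslerian Schur Lemma, since the hypothesis makes $\K$ depend on neither the flagpole nor the plane.
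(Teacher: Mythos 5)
Your proof is correct --- I checked the cancellation you flagged as the delicate step, and it does go through --- but note that there is essentially nothing in the paper to compare it with: the author explicitly omits the proof, remarking only that in dimension $\geq 3$ the statement is a special case of the Schur Lemma and citing \S 9 of Berwald's 1929 paper for an independent proof in all dimensions. What you have written is in effect that missing Berwald-style argument, reconstructed from identities already available in the chapter: equating the two expressions for $\Tau_k$ (the one in the proof of the curvature theorem of Section \ref{sec.curvPflat} and the one in the remark following it) under the hypothesis $\Sc(x,y)=\lambda(x)F^2(x,y)$ gives your identity $(\dagger)$; differentiating $(\dagger)$ in $y$ gives the symmetry of $\partial^2P/\partial x^k\partial y^m$; and in the antisymmetrized $x$-derivative of $(\dagger)$ every term containing $P$ indeed cancels --- the Hessian terms by that auxiliary symmetry and by Hamel's condition (c) of Proposition \ref{prop.hamel}, the product terms after substituting $(\dagger)$ and \eqref{eq.2FPP} --- leaving $(\star)$, after which contraction with $y^k$ and Euler's relation kill $d\lambda$. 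Your reading of the statement is also the right one: taken literally, for an arbitrary strongly convex Finsler metric the proposition is false in dimension $2$ (on any Riemannian surface, $\K(x,y)$ is the Gauss curvature, which is independent of $y$ but generally non-constant), so the projectively flat context of the section is a necessary hypothesis rather than a convenience; it is exactly the setting of Berwald's proof, your computation covers it for all $n\geq 2$, and your closing appeal to the Schur Lemma for $n\geq 3$ reproduces the other half of the paper's remark. For completeness you should make two small points explicit: $\lambda$ is smooth in $x$ (evaluate $\Sc/F^2$ along any fixed nonzero $y$), and the final passage from locally constant to constant uses the connectedness of $M$, just as in the paper's proof of the Beltrami theorem.
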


We omit the proof. In dimension $\geq 3$ this is a particular case of the Schur Lemma. 
Berwald gave an independent proof
in all dimensions in \cite[\S 9]{Berwald1929}.

\medskip

In the same spirit, we show how Corollary \ref{cor.berwald1} can be used to prove the Beltrami Theorem of 
Riemannian geometry in dimension at least $3$.

\begin{theorem}[Beltrami]  \index{Beltrami theorem}\index{theorem!Beltrami} \label{th.Beltrami}
 A connected Riemannian manifold $(M,g)$ is locally projectively flat if and only if it has
 constant sectional curvature.
\end{theorem}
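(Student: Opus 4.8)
The plan is to treat the two implications separately, using Corollary \ref{cor.berwald1} for the forward implication and the explicit model geometries of Examples \ref{ex.Fplat1} for the converse. Throughout I work in dimension $n \geq 3$, as the preceding discussion indicates.

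For the implication ``locally projectively flat $\Rightarrow$ constant curvature'', I would first restrict to a neighborhood of an arbitrary point in which $g$ is realized as a smooth, strongly convex, projectively flat Finsler metric on a convex domain of $\r^n$: a Riemannian metric is exactly such a Finsler metric, with fundamental tensor $g_{ij}(x)$ independent of $y$ and everywhere positive definite. Corollary \ref{cor.berwald1} then applies and tells us that the flag curvature $\K(y,\sigma)$ is of scalar type, that is, it depends only on $(x,y)$ and not on the $2$-plane $\sigma$ (Remark \ref{Kscal}). On the other hand, for a Riemannian metric the flag curvature is precisely the sectional curvature $\K(\sigma)$, which by definition depends only on $\sigma$ and not on the flagpole $y \in \sigma$. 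Combining the two facts, for every flag $(y,\sigma)$ at a point $x$ we have $\K(\sigma) = \K(x,y)$, the left-hand side being independent of $y$ and the right-hand side independent of $\sigma$. A short linear-algebra argument then finishes the local step: any two linearly independent directions $y, y'$ at $x$ lie in a common $2$-plane $\sigma$, and along it $\K(x,y) = \K(y,\sigma) = \K(y',\sigma) = \K(x,y')$, while parallel directions agree by homogeneity; hence $\K(x,\cdot)$ is a single scalar $\kappa(x)$ and $\K(\sigma) = \kappa(x)$ for every plane, i.e. the sectional curvature is pointwise isotropic. Since $M$ is connected and $n \geq 3$, the Schur Lemma upgrades this pointwise constancy to global constancy.

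For the converse I would invoke the model spaces. A connected Riemannian manifold of constant sectional curvature $\kappa$ is locally isometric to Euclidean space ($\kappa = 0$), to a rescaled round sphere ($\kappa > 0$), or to a rescaled hyperbolic space ($\kappa < 0$). Each of these models is locally projectively flat: Euclidean space trivially, since affine segments are geodesics; the sphere by the gnomonic projection of Examples \ref{ex.Fplat1}; and hyperbolic space by the Klein model of Example \ref{ex.Klein}, whose metric (\ref{KleinMetric}) is an explicitly projectively flat Riemannian metric. Because local projective flatness is a local property that is invariant under isometry, it follows that $(M,g)$ itself is locally projectively flat.

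The main obstacle is the forward implication, and specifically the step that converts the two independence statements into pointwise isotropy: one has to reconcile the scalar flag curvature of Corollary \ref{cor.berwald1}, a function of the flagpole, with the Riemannian sectional curvature, a function of the $2$-plane, and this is only possible when both are constant at each point. Once that is established, the appeal to the Schur Lemma and the transfer of projective flatness through the model isometries are routine. I would emphasize that the hypothesis $n \geq 3$ is essential here, since Schur's Lemma fails in dimension $2$, where the Beltrami theorem requires a separate treatment.
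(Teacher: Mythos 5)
Your proof is correct and follows essentially the same route as the paper: the forward direction combines the plane-independence of the flag curvature given by Corollary \ref{cor.berwald1} with the flagpole-independence of the Riemannian sectional curvature (linking any two directions through a common $2$-plane) and then invokes the Schur Lemma, while the converse proceeds through the three model geometries exactly as in the paper's remark following the theorem. Like the paper, you defer the two-dimensional case, where the Schur Lemma is unavailable, to a separate direct argument.
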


\begin{proof}
 If $\dim(M) = 2$, then a direct proof of the fact that its curvature is constant is not difficult, see e.g. Busemann   \cite[page 85]{Busemann1955}.
 We can thus assume $\dim(M) \geq 2$.
 Fix a point $p$ and consider two $2$-planes $\sigma, \sigma' \subset T_pM$ and choose non-
 zero vectors $y\in \sigma$ and $y' \in \sigma'$. Let us also set  $\tau = \mathrm{span} \{y,y' \} \subset T_pM$. 
 Because $(M,g)$ is Riemannian, the flag curvature $\K_p(y,\sigma)$ of a the flag $(y,\sigma)$ is 
 independent of the choice of the flagpole $y \in \sigma$, and because $M$ is projectively flat, 
 Corollary \ref{cor.berwald1} says that the flag curvature of a  flag $(y,\sigma)$ is independent the 
 $2$-plane $\sigma$ containing $y$.
 Therefore
 $$
   \K_p(\sigma) = \K_p(y,\sigma) = \K(p,y) =  \K_p(\tau,y) = \K(p,y') = \K_p(\sigma',y') 
   = \K_p(\sigma').
 $$
 The sectional curvature of $(M,g)$ at a point $p$ is thus independent of the choice of
 a $2$-plane $\sigma \subset T_p.$ Because $M$ is connected we then conclude from the
 Schur Lemma that $(M,g)$ has constant sectional curvature.
\end{proof}

\begin{remark}
For a modern Riemannian proof, see \cite{Matveev2006}, or \cite[chapter 8, Theorem 4.2]{Sharpe}
for a proof from the point of view of Cartan geometry. 
The converse of this theorem is classical. Suppose $(M,g)$ has constant sectional curvature $K$. 
 Rescaling the metric if necessary, we may assume $K = 0$, $+1$ or $-1$.  If $K = 0$,
then $(M,g)$  is locally isometric to the Euclidean space $\r^n$, which is flat. If $K=+1$, then $(M,g)$
 is locally isometric  to the standard sphere $S^n$, which is projectively flat by Example (\ref{ex.Fplat1}.c). 
 And if $K=-1$, then $(M,g)$ is locally isometric to the hyperbolic space $\mathbb{H}^n$,
 which is isometric to the unit ball $\mathbb{B}^n$ with its Klein metric  (\ref{KleinMetric}).
\end{remark}

\section{The flag curvature of the Funk and the Hilbert geometries}

The flag curvature of the Hilbert metrics was computed in 1929 by Funk in dimension 2  and  by Berwald
in all dimensions, see \cite{Berwald1929,Funk1929}. In 1983, T. Okada proposed a more direct computation \cite{Okada},
and in 1995,   D. Egloff  related these curvatures to the Reeb field of the Finsler  manifold \cite{EgloffPhD, Egloff1997}.

\medskip

The original Funk-Berwald computation is based on the following important relation between the flag curvature and the 
exponential map of a projectively flat Finsler manifold:

\begin{proposition}\label{Prop.Funk}
 Let $F : T\mathcal{U} \to \r$ be a strongly convex projectively flat  Finsler metric on 
 the convex domain $\mathcal{U}$ of $\r^n$. 
 Then for any geodesic $\beta(s) = p + \varphi(s) \xi$ we have
 $$
  \dot \varphi (s)^2 \cdot  \Sc (\beta (s), \xi)  =  \tfrac{1}{2}  \left. \{ \varphi (s), s\} \right|_{s=0}
 $$
 where $ \{ \varphi (s), s\}$ is the Schwarzian derivative\footnote{See Appendix B.} of $\varphi$ with respect to $s$.
\end{proposition}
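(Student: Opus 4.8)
The plan is to connect the scalar curvature $\Sc(x,y)$ (given by \eqref{eq.Rscal} as $P^2 - y^j\tfrac{\partial P}{\partial x^j}$) to the Schwarzian derivative of the reparametrization function $\varphi$ along a geodesic. The key observation is that the projective factor $P$ appears directly in how affine lines are reparametrized to become unit-speed geodesics, so the second-order data of $\varphi$ should encode $P$ and its first derivative along the flow, which is exactly what $\Sc$ combines. First I would recall from the proof of the projective-flatness proposition that if $\beta(s) = p + \varphi(s)\xi$ is a geodesic, then plugging into the spray form of the geodesic equation \eqref{eq.geod2} with $G^k = P\,y^k$ yields the scalar ODE $\ddot\varphi(s) + 2\,\dot\varphi(s)^2\, P(\beta(s),\xi) = 0$ (here using $0$-homogeneity of $P$ in $y$ to write $P(\beta(s),\dot\beta(s)) = P(\beta(s),\xi)$). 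This is the master relation tying $\varphi$ to $P$.

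From this ODE I would solve for $P$ along the geodesic, obtaining
$$
 P(\beta(s),\xi) = -\frac{\ddot\varphi(s)}{2\,\dot\varphi(s)^2},
$$
which generalizes \eqref{eq.P1} from $s=0$ to all $s$. The next step is to differentiate $P(\beta(s),\xi)$ with respect to $s$ and extract the combination $P^2 - y^j\tfrac{\partial P}{\partial x^j}$. Along the geodesic, $\tfrac{d}{ds}P(\beta(s),\xi) = \dot\varphi(s)\,\xi^j\,\tfrac{\partial P}{\partial x^j}(\beta(s),\xi)$, while $y^j = \dot\varphi(s)\xi^j$, so $y^j\tfrac{\partial P}{\partial x^j} = \dot\varphi(s)\,\xi^j\tfrac{\partial P}{\partial x^j} = \tfrac{d}{ds}P(\beta(s),\xi)$ evaluated with the chain rule. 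Therefore
$$
 \Sc(\beta(s),\xi) = P^2 - y^j\tfrac{\partial P}{\partial x^j} = P(\beta(s),\xi)^2 - \frac{d}{ds}P(\beta(s),\xi).
$$
Substituting $P = -\ddot\varphi/(2\dot\varphi^2)$ and computing $\tfrac{d}{ds}$ of it is then a direct, if slightly delicate, calculation; I expect the right-hand side to reorganize into the standard Schwarzian expression $\{\varphi,s\} = \tfrac{\dddot\varphi}{\dot\varphi} - \tfrac{3}{2}\left(\tfrac{\ddot\varphi}{\dot\varphi}\right)^2$ after multiplication by $\dot\varphi^2$.

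The main obstacle is precisely this last algebraic identity: one must verify that
$$
 \dot\varphi^2\left(\frac{\ddot\varphi^2}{4\dot\varphi^4} + \frac{d}{ds}\frac{\ddot\varphi}{2\dot\varphi^2}\right)
$$
collapses to $\tfrac12\{\varphi,s\}$, which requires carefully expanding $\tfrac{d}{ds}(\ddot\varphi/\dot\varphi^2)$ and tracking the cancellations between the $\dddot\varphi/\dot\varphi$ term and the $(\ddot\varphi/\dot\varphi)^2$ terms. I would handle the bookkeeping by writing $u = \ddot\varphi/\dot\varphi$, noting $P\dot\varphi = -u/2$, and re-expressing everything in $u$ and $\dot u$; the Schwarzian is $\dot u - \tfrac12 u^2$, and the factors of $\dot\varphi$ should match up cleanly. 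One subtlety to flag: the statement evaluates $\{\varphi,s\}$ at $s=0$ on the right but keeps $s$ free on the left, so I would either read the identity as holding for each geodesic at its initial point (exploiting that any point of the domain can be taken as $p$), or equivalently note that by the reproducing/homogeneity structure the relation is point-independent once normalized, so evaluating at $s=0$ loses no generality.
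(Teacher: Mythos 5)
Your overall strategy is exactly the paper's: derive the scalar ODE relating $\varphi$ to the projective factor $P$ along the geodesic, differentiate it in $s$, and recognize the Schwarzian. But there is a genuine error in the homogeneity bookkeeping, and it propagates until the final identity you propose to verify is false. The projective factor $P$ is positively homogeneous of degree $1$ in $y$, not degree $0$: the spray coefficients $G^k$ are $2$-homogeneous and $G^k = P\,y^k$, so $P(\beta(s),\dot\beta(s)) = \dot\varphi(s)\,P(\beta(s),\xi)$. (This is exactly what the paper uses when it invokes that $P(x,y)$ is homogeneous of degree $1$ in $y$, together with Euler's identity $y^k\frac{\partial P}{\partial y^k}=P$.) Your master ODE $\ddot\varphi + 2\dot\varphi^2 P(\beta(s),\xi) = 0$ is correct, but it follows from degree-$1$ homogeneity; the degree-$0$ claim you cite would instead give $\ddot\varphi + 2\dot\varphi\, P(\beta(s),\xi) = 0$, which is inconsistent with what you wrote.

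The same confusion then corrupts the key substitution. Writing $Q(s) = P(\beta(s),\xi)$, the chain rule gives $\dot Q = \dot\varphi\,\xi^j\frac{\partial P}{\partial x^j}(\beta(s),\xi)$, whereas the contraction appearing in $\Sc(\beta(s),\xi)$ is $\xi^j\frac{\partial P}{\partial x^j}(\beta(s),\xi)$ --- the second slot is $\xi$, not $\dot\beta$. Hence
$$
  \Sc(\beta(s),\xi) \;=\; Q^2 - \frac{\dot Q}{\dot\varphi}\,,
  \qquad\text{not}\qquad Q^2 - \dot Q\,.
$$
With your version, the ``main obstacle'' identity fails: expanding
$\dot\varphi^2\bigl(\tfrac{\ddot\varphi^2}{4\dot\varphi^4} + \tfrac{d}{ds}\tfrac{\ddot\varphi}{2\dot\varphi^2}\bigr)$
gives $\tfrac{\dddot\varphi}{2} - \tfrac{\ddot\varphi^2}{\dot\varphi} + \tfrac{\ddot\varphi^2}{4\dot\varphi^2}$, whose powers of $\dot\varphi$ cannot match $\tfrac12\{\varphi,s\} = \tfrac{\dddot\varphi}{2\dot\varphi} - \tfrac{3\ddot\varphi^2}{4\dot\varphi^2}$; the verification you deferred would have collapsed at this point. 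With the corrected factor everything does collapse:
$$
 \dot\varphi^2\left(Q^2 - \frac{\dot Q}{\dot\varphi}\right)
 = \frac{\ddot\varphi^2}{4\dot\varphi^2} + \frac{\dddot\varphi\,\dot\varphi - 2\ddot\varphi^2}{2\dot\varphi^2}
 = \frac{\dddot\varphi}{2\dot\varphi} - \frac{3}{4}\,\frac{\ddot\varphi^2}{\dot\varphi^2}
 = \tfrac12\{\varphi,s\},
$$
valid for every $s$ (so your reading of the $s=0$ evaluation in the statement as inessential is correct). Note that the paper avoids this trap by keeping $y=\dot\beta$ in both arguments throughout: it writes $\ddot\varphi/\dot\varphi = -2P(x,y)$, differentiates using Euler's identity to get $\{\varphi,s\} = 2\Sc(x,y)$, and only at the very end converts $\Sc(x,y) = \dot\varphi^2\,\Sc(x,\xi)$ via the degree-$2$ homogeneity of $\Sc$ in $y$ --- the step your argument needed and lost.
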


\medskip

\begin{proof}
 Let us write  $x(s) = \beta (s) =  p + \varphi(s) \xi$ and $y(s) = \dot x (s) = \dot \varphi (s) \xi$. 
 The geodesic equation  (\ref{eq.geod2}) implies 
 $$
   \dot y^k  + 2 G^k (x,y) =  \dot y^k  + 2 P^k (x,y) y^k = \left(\ddot \varphi(s) + 2 \dot\varphi (s) P(x,y) \right) \xi^k = 0,
 $$
 for $k = 1, 2, \dots, n$. We thus have
 $$
    \frac{\ddot \varphi (s) }{\dot \varphi(s)} =  -2 P(x,y),
 $$
 and therefore
 $$
  \frac{d}{ds} \left( \frac{\ddot \varphi }{\dot \varphi }\right) = -2  \frac{d}{ds} P(x,y) 
  = - 2 \frac{\partial P}{\partial x^k} y^k - 2\frac{\partial P}{\partial y^k} \dot y^k. 
 $$
 Since $y^k =\dot x^k =  \dot \varphi(s) \xi^k$ and $\dot y^k = \ddot \varphi(s) \xi^k$, we have  
 $$
  \dot y^k =  \frac{\ddot \varphi }{\dot \varphi} \cdot y^k = -2 P(x,y)\,  y^k
 $$ 
  and 
 $$
   \frac{d}{ds} \left( \frac{\ddot \varphi }{\dot \varphi}\right) =
   - 2 \frac{\partial P}{\partial x^k} \dot x^k + 4 P\, \frac{\partial P}{\partial y^k}  y^k =  - 2 \frac{\partial P}{\partial x^k} y^k + 4 P^2,
 $$
 because $P(x,y)$ is homogenous of degree 1 in $y$. We thus obtain
 $$
   \big\{ \varphi , s\big\} = \frac{d}{ds} \left( \frac{\ddot \varphi }{\dot \varphi }\right) - \frac{1}{2} \left( \frac{\ddot \varphi }{\dot \varphi }\right)^2
   = 2\left(P^2 - \frac{\partial P}{\partial x^k} y^k \right) = 2\Sc (x,y) = 2 \dot\varphi(s)^2 \Sc (x, \xi).
 $$
\end{proof}

\bigskip

A first consequence of  this proposition is the following local characterization of reversible Minkowski metrics:

\begin{corollary}\label{cor.Knul}
 Let $F(x,y)$ be a strongly convex reversible Finsler metric. Then $F$ is locally Minkowski
 if and only if it is projectively flat with flag curvature $\K = 0$.
\end{corollary}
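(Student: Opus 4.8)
The plan is to prove the two implications separately. The forward implication is immediate, while the converse is the substantial one, and for it I would reduce everything to showing that the projective factor $P$ of $F$ vanishes identically.

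First, for the direct implication, assume $F$ is locally Minkowski. Near each point it is isometric to a translation--invariant Lagrangian $F_0(\xi)$, whose affine segments are geodesics, so $F$ is projectively flat by Example \ref{ex.Fplat1}. Moreover the fundamental tensor of a Minkowski norm is independent of $x$, so every osculating Riemannian metric $\g_V$ is a constant (hence flat) metric; thus $\RR_y\equiv 0$ and $\K\equiv 0$. For the converse I would use Corollary \ref{cor.PnulMink}, which reduces the problem to proving that $P\equiv 0$. Since $F$ is projectively flat with $\K=0$, Corollary \ref{cor.berwald1} gives $\Sc(x,y)=P^2-y^j\,\partial P/\partial x^j\equiv 0$. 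Feeding $\Sc\equiv 0$ into the computation of the coefficient $\Tau_k$ in Section \ref{sec.curvPflat} (where one has both $\Tau_k=-\tfrac{\Sc}{F}\,\partial F/\partial y^k$ and $\Tau_k=\partial\Sc/\partial y^k+3(\partial P/\partial x^k-P\,\partial P/\partial y^k)$) makes both expressions vanish, yielding the pointwise identity
\begin{equation*}
 \frac{\partial P}{\partial x^k}=P\,\frac{\partial P}{\partial y^k},\qquad k=1,\dots,n.
\end{equation*}
Equivalently, by Proposition \ref{Prop.Funk}, $\Sc\equiv 0$ says that along every geodesic $\beta(s)=p+\varphi(s)\xi$ the Schwarzian $\{\varphi,s\}$ vanishes, so $\varphi$ is a fractional--linear (M\"obius) function of arc length.

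The last step, deducing $P\equiv 0$, is where I expect the real difficulty. Reversibility enters through Corollary \ref{cor.Preversible}, giving the oddness $P(x,-y)=-P(x,y)$, and through $F(x,-y)=F(x,y)$ itself. The main obstacle is that reversibility \emph{cannot} be exploited one geodesic at a time: integrating the displayed identity along a fixed direction gives the explicit relations $F(x+t\xi,\xi)=F(x,\xi)/\bigl(1-tP(x,\xi)\bigr)^{2}$ and $P(x+t\xi,\xi)=P(x,\xi)/\bigl(1-tP(x,\xi)\bigr)$, and one checks directly that the conditions $F(x,-\xi)=F(x,\xi)$ and $P(x,-\xi)=-P(x,\xi)$ then propagate consistently along the line with $P\neq 0$. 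In other words, the along--geodesic data alone never contradict $P\neq 0$; indeed Berwald's $\K=0$ example (Section 7 of \cite{Berwald1929}) is strongly convex and projectively flat with an odd (in fact linear) projective factor $P\neq 0$, and fails to be Minkowski precisely because it is \emph{not} reversible. Hence the rigidity must be transversal: I would differentiate the genuine--metric relation $\partial F/\partial x^m=P\,\partial F/\partial y^m+F\,\partial P/\partial y^m$ of Lemma \ref{lem.PF} across directions and invoke the positive--definiteness of the fundamental tensor $g_{ij}$, together with the full evenness of $F(x,\cdot)$ (not merely the oddness of $P$), to force $P\equiv 0$. Once $P\equiv 0$, Corollary \ref{cor.PnulMink} concludes that $F$ is locally Minkowski. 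The hardest part is thus genuinely the transversal argument closing the gap between ``odd projective factor with vanishing Schwarzian'' and ``$P\equiv 0$''.
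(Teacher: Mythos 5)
Your diagnosis of where the difficulty sits is the most valuable part of the proposal, but as written there is a genuine gap: the entire converse rests on the final ``transversal argument'', and that argument is never given --- you only announce what you would differentiate and what you would invoke. The steps you do carry out (the identity $\partial P/\partial x^k=P\,\partial P/\partial y^k$ from $\Sc\equiv 0$, the oddness of $P$ from Corollary \ref{cor.Preversible}, the propagation formulas along a line) are correct but routine; the theorem \emph{is} the step you leave open. Your one piece of supporting evidence is also wrong: Berwald's example in \cite[\S 7]{Berwald1929} does not have an odd, let alone linear, projective factor --- its projective factor is the Funk metric of the unit ball, which is positive in every direction. Indeed, a projective factor that is linear in $y$ makes the spray $G^k=P\,y^k$ quadratic in $y$, hence $F$ Berwald, and a Berwald metric with $\K=0$ is locally Minkowski.

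The deeper problem is that the missing step cannot be supplied: under the stated, purely local, hypotheses the intermediate claim $P\equiv 0$ is false, so your reduction to Corollary \ref{cor.PnulMink} --- which is also the paper's reduction --- is unattainable. Pull back a reversible Minkowski norm $F_0$ by the projective map $\Phi(x)=x/(1-x^1)$ on the half-space $\{x^1<1\}$; explicitly
\[
F(x,y)\;=\;\frac{1}{1-x^1}\,F_0\Bigl(y+\frac{y^1}{1-x^1}\,x\Bigr).
\]
This $F$ is reversible, smooth, strongly convex, projectively flat (projective maps send segments to segments) and has $\K\equiv 0$, being isometric to $(\r^n,F_0)$; yet its projective factor is $P(x,y)=y^1/(1-x^1)\not\equiv 0$. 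Hence no pointwise manipulation of the identities of Lemma \ref{lem.PF}, with or without the evenness of $F$, can force $P\equiv0$. The same example shows that the paper's own proof --- which is exactly the one-geodesic-at-a-time argument you rightly distrust --- does not go through: the assertion that reversibility makes $\beta^-(s)=p-\varphi(s)\xi$ a geodesic fails here (at $p=0$, $\xi=e_1$ the geodesic in direction $\xi$ is parametrized by $\varphi(s)=s/(1+s)$ while the geodesic in direction $-\xi$ is parametrized by $s/(1-s)$); reversibility only makes the \emph{reversal} $s\mapsto\beta(-s)$ a geodesic, which returns Corollary \ref{cor.Preversible} and nothing more. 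What rescues the corollary is a completeness hypothesis, exactly as in Theorem \ref{th.CharHilb}: if $F$ is forward and backward complete, then a geodesic with M\"obius parametrization $\varphi(s)=as/(cs+1)$, $c\neq0$, cannot be extended past the finite parameter value $s=-1/c$ (the point escapes to affine infinity or exits the domain), a contradiction; hence $P\equiv0$ and Corollary \ref{cor.PnulMink} concludes. With completeness the proof is those two lines; without it one can even produce reversible, strongly convex, projectively flat germs with $\K=0$ that are not locally Minkowski at all, by solving $F_{x^k}=(PF)_{y^k}$ for a nonlinear odd solution $P$ of the Funk equation. So the cure is a global hypothesis, not a sharper local identity.
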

 
\begin{proof}  
The geodesics of a  (strongly convex)  Minkowski metric $F$ are the affinely parametrized straight lines
$\beta (s) = p + (as+b)\xi$. Therefore $F$ is projectively flat and its flag curvature vanishes since
$$
 \{ as+b, s\} = 0.
$$
Conversely, suppose that $F(x,y)$ is a strongly convex projectively flat reversible Finsler metric with flag curvature $\K = 0$.
The geodesics are then of the type $\beta (s) = p + \varphi(s)\xi$, with $ \{ \varphi(s), s\} = 0$. 
Using Lemma \ref{lem.schwarzian} one obtains 
$$
  \varphi(s) = \frac{As+B}{Cs+D}, \qquad (AD-BC \neq 0).
$$
Assuming the initial conditions $\beta (0) = p$ and $\dot \beta(0) = \xi$, we get $B= 0$ and $A=D$. Since 
$F$ is reversible,  $\beta^{-} (s) = p - \varphi(s)\xi$ is also a geodesic and it has the same 
speed as $\beta$, therefore $\varphi(-s) = - \varphi(s)$
and thus
$$
 \varphi(s) =  \frac{As}{Cs+A} = - \varphi(-s)   =  \frac{As}{-Cs+A}.
$$
This implies $C = 0$, therefore $\varphi(s) = s$ and the projective factor $P(p,\xi) = 0$.
We conclude form Corollary  \ref{cor.PnulMink} that $F$ is locally Minkowski.
\end{proof}

This result  is due to Berwald,  who gave a different proof.  Berwald  also gave a counterexample in the non-reversible
case, see  \cite[\S 7]{Berwald1929} and \cite{Shen2001d}. 
 
 \medskip
 
 Another consequence of the Proposition is the following calculation of the flag curvature of the Funk and Hilbert geometries:
 
\begin{corollary}
 The flag curvature of the Funk metric
 in a strongly convex domain $\mathcal{U}\subset \r^n$ is constant equal to $-\frac{1}{4}$
 and the  flag curvature of the Hilbert metric in $\mathcal{U}$ is constant equal to $- 1$.
\end{corollary}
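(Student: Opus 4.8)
The plan is to read off the flag curvature from the curvature formula for projectively flat metrics, Corollary~\ref{cor.berwald1}, feeding in the explicit geodesics computed in Sections 3 and 4 through the Schwarzian relation of Proposition~\ref{Prop.Funk}. Since $F_f$ and $F_h$ are projectively flat, they are of scalar flag curvature (Remark~\ref{Kscal}) with $\K(x,y)=\Sc(x,y)/F^2(x,y)$, and because $\Sc$ and $F^2$ are both homogeneous of degree $2$ in $y$ the quotient $\K$ is $0$-homogeneous; it therefore suffices to evaluate it along unit-speed geodesics. Along a unit-speed geodesic $\beta(s)=p+\varphi(s)\xi$ one has $F(\beta(s),\dot\beta(s))=1$, while Proposition~\ref{Prop.Funk} together with the homogeneity $\Sc(x,\dot\varphi\,\xi)=\dot\varphi^2\,\Sc(x,\xi)$ gives $\Sc(\beta(s),\dot\beta(s))=\tfrac12\{\varphi,s\}$. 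Hence $\K=\tfrac12\{\varphi,s\}$, and the entire problem collapses to computing one Schwarzian derivative in each case.

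The two elementary facts I would use are, first, the direct computation $\{e^{as},s\}=-a^2/2$ for any constant $a$ (immediate from $\ddot\psi/\dot\psi=a$ when $\psi=e^{as}$), and second, the Möbius invariance of the Schwarzian from Appendix~B: if $M(w)=\frac{\alpha w+\beta}{\gamma w+\delta}$ with $\alpha\delta-\beta\gamma\neq0$, then $\{M\circ\psi,s\}=\{\psi,s\}$. For the Funk metric, the geodesic~(\ref{eq.FunkGeodesic1}) is $\varphi(s)=\frac{1}{F_f(p,\xi)}(1-e^{-s})$, an affine and hence Möbius function of $e^{-s}$, so $\{\varphi,s\}=\{e^{-s},s\}=-\tfrac12$ and $\K=-\tfrac14$, independently of $(p,\xi)$. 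For the Hilbert metric, writing $F=F_f(p,\xi)$ and $F^*=F_f(p,-\xi)$ and clearing $e^s$ in~(\ref{eq.HilbGeodesic1a}), one gets $\varphi(s)=\frac{e^{2s}-1}{F e^{2s}+F^*}$, which is the nondegenerate Möbius image $w\mapsto\frac{w-1}{Fw+F^*}$ of $w=e^{2s}$; therefore $\{\varphi,s\}=\{e^{2s},s\}=-2$ and $\K=-1$. Since these values do not depend on the chosen point or direction, the flag curvatures are the asserted constants.

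The only non-routine ingredient is the Möbius invariance of the Schwarzian, which I would quote from Appendix~B rather than reprove; everything else is substitution and homogeneity bookkeeping. The one point needing a word of care is that the Möbius transformation in the Hilbert case is genuinely nondegenerate: its determinant is $F+F^*$, and in a bounded strongly convex domain $F_f(p,\xi)>0$ for every $\xi\neq0$, so $F+F^*>0$ and the Schwarzian is indeed unchanged. This positivity, ensured by the boundedness of $\mathcal{U}$, is exactly where the hypothesis on the domain enters the computation.
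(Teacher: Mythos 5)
Your proof is correct and follows essentially the same route as the paper: both reduce the computation, via Proposition~\ref{Prop.Funk} and the identity $\K=\Sc/F^2$, to evaluating the Schwarzian of the explicit geodesic parametrizations (\ref{eq.FunkGeodesic1}) and (\ref{eq.HilbGeodesic1a}) along unit-speed geodesics. The only difference is cosmetic: where the paper computes $\{\varphi,s\}=-\tfrac12$ and $\{\varphi,s\}=-2$ by direct differentiation, you obtain them by writing $\varphi$ as a nondegenerate M\"obius image of $e^{-s}$ (resp.\ $e^{2s}$) and invoking the invariance from Lemma~\ref{lem.schwarzian}, which is a clean shortcut and correctly checks the nondegeneracy $F+F^*>0$.
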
 

\begin{proof}
These  geometries are projectively flat with geodesics $\beta (s) = p + \varphi (s) \xi$ and from the previous proposition, we 
know that
$$
   \K (p, \xi) = \frac{\Sc (p, \xi)}{F^2(p, \xi)}
   =   \frac{1}{2}   \frac{\left.\big\{ \varphi (s), s\big\} \right|_{s=0}}{\dot \varphi (0)^2 F^2(p, \xi)}.
 $$
For the Funk metric, the function  $\varphi (s)$ is given by (\ref{eq.FunkGeodesic1}) and we have
$\big\{ \varphi , s\big\} = -\frac{1}{2}$ and $\dot \varphi (0) F(p, \xi) = 1$, therefore
$\K = -\frac{1}{4}$.
For the Hilbert metric,
 the function  $\varphi (s)$ is given by (\ref{eq.HilbGeodesic1a}). We calculate that
$\big\{ \varphi(s) , s\big\} = - 2$ and $\dot \varphi (0) F(p, \xi) = 1$, and we thus obtain $\K = -1$.
\end{proof}

 \bigskip   

\begin{remark}\label{rem-Okadacalc} Following  Okada \cite{Okada}, we can also compute these curvatures directly from Corollary
\ref{cor.berwald1} and Equation  (\ref{eq.FFxy}). 
 For the Funk metric, we have $P(x,y) = \frac{1}{2}F_f(x,y)$, therefore equation (\ref{eq.FFxy}),
 gives
 $$
    \frac{\partial P}{\partial x^j}y^j  = 2 P \frac{\partial P}{\partial y^j}y^j =  2P^2,
 $$
 and we thus have
 $ \K = \frac{1}{4 P^2}\left(P^2 -  \frac{\partial P}{\partial x^j}y^j \right)
  = -  \frac{1}{4}.$
 For the Hilbert metric $F_h = \frac{1}{2}(F_f + F_f^*)$, the projective factor is 
 $
   P(x,y) =  \frac{1}{2}(F_f - F_f^*),
 $
 therefore
 $$
    \frac{\partial P}{\partial x^j} =  \frac{1}{2}(\frac{\partial F_f}{\partial x^j} - \frac{\partial F_f^*}{\partial x^j})
    =  \frac{1}{2}(F_f\frac{\partial F_f}{\partial y^j} + F_f^*\frac{\partial F_f^*}{\partial y^j}),
 $$
 and thus
 $$
     \frac{\partial P}{\partial x^j}y^j = \frac{1}{2}F_f F_h =  \frac{1}{2}(F_f^2 + {F_f^*}^2).
$$
It follows that 
\begin{eqnarray*}
P^2 -  \frac{\partial P}{\partial x^j}y^j  & = &
\frac{1}{4}(F_f-F_f^*)^2 - \frac{1}{2}(F_f^2 + {F_f^*}^2)
\\ & = & 
 -\frac{1}{4}(F_f+F_f^*)^2
\\ & = &  -F_h^2.
\end{eqnarray*}
And we conclude that
$
  \K = \frac{1}{F_h^2}\left(P^2 -  \frac{\partial P}{\partial x^j}y^j \right)
  = -  1.
$
\end{remark}

This curvature computation allows us to provide a simple proof of the following important result
which is due to I. J. Schoenberg and D. Kay, see \cite[Corollary 4.6]{Guo2012}. 

\begin{theorem}
 The Hilbert metric $F_h$ in a bounded  domain $\mathcal{U} \subset \r^n$ with smooth strongly convex boundary
 is Riemannian  if and only if \  $\mathcal{U}$ is an ellipsoid. 
\end{theorem}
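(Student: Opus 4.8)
The plan is to prove the two implications separately, relying throughout on the computation of Remark \ref{rem-Okadacalc} that $F_h$ has constant flag curvature $\K = -1$.

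For the easy direction I would start from the observation that every ellipsoid is the image of the Euclidean unit ball $\mathbb{B}^n$ under some affine map $A$ of $\r^n$. By Proposition \ref{prop.disthilb} the Hilbert distance is invariant under affine (indeed projective) transformations, so $A$ is an isometry from $(\mathcal{U}, \dhilb)$ onto $(\mathbb{B}^n, \dhilb)$. By Example \ref{ex.Klein} the Hilbert structure of the ball is the Klein model, whose Lagrangian (\ref{KleinMetric}) is visibly Riemannian; pulling back along $A$ then shows that $F_h$ is Riemannian on $\mathcal{U}$.

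For the converse, suppose $F_h$ is Riemannian and write $g$ for the Riemannian metric with $F_h(x,\xi)=\sqrt{g_x(\xi,\xi)}$. Since for a Riemannian metric the flag curvature coincides with the ordinary sectional curvature, $(\mathcal{U}, g)$ has constant sectional curvature $-1$. It is also complete by Proposition \ref{prop.disthilb} and, $\mathcal{U}$ being convex, simply connected; hence by the Killing--Hopf classification of space forms it is globally isometric to hyperbolic space $\mathbb{H}^n$, that is, to the Klein model $(\mathbb{B}^n, g_{\mathrm{Klein}})$. I would then fix such an isometry $\Phi : \mathbb{B}^n \to \mathcal{U}$ and exploit the fact that both metrics are projectively flat (Examples \ref{ex.Fplat1}): their geodesics are precisely the affine segments, so $\Phi$ carries straight segments to straight segments. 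Invoking the smooth version of the fundamental theorem of projective geometry --- a diffeomorphism between domains of $\mathbb{RP}^n$ with $n \geq 2$ that preserves line segments is the restriction of a collineation --- I conclude that $\Phi \in \mathrm{PGL}_{n+1}(\r)$. Therefore $\mathcal{U} = \Phi(\mathbb{B}^n)$ is the image of a round ball under a projective map, hence a region bounded by a quadric hypersurface; and since $\mathcal{U}$ is bounded and convex, this quadric is an ellipsoid.

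The main obstacle is precisely this last step, the passage from an abstract isometry to a concrete projective transformation of $\r^n$: everything hinges on the rigidity statement that a line-preserving diffeomorphism is a collineation. The remaining ingredients --- the constancy of the flag curvature, the completeness of $\dhilb$, the Killing--Hopf theorem, and the projective invariance of the Hilbert distance --- are all directly available from the material developed above.
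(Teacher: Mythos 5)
Your proposal is correct and takes essentially the same route as the paper: the easy direction via affine equivalence of the ellipsoid with the unit ball and the Klein metric (\ref{KleinMetric}), and the converse by noting that $(\mathcal{U},g)$ is a complete, simply connected Riemannian manifold of constant sectional curvature $-1$, hence isometric to the Klein model of $\mathbb{H}^n$. You in fact go one step further than the paper, which ends with the bare assertion ``it follows that $\mathcal{U}$ is an ellipsoid'': your rigidity argument --- the isometry carries geodesics to geodesics, both metrics are projectively flat so segments go to segments, hence by the fundamental theorem of projective geometry (for $n\geq 2$) the isometry is a projective collineation and $\mathcal{U}$ is the bounded, convex, projective image of a round ball --- is precisely the justification that the paper's final sentence implicitly relies on.
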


\begin{proof}
 If  $\mathcal{U}$ is an ellipsoid, then it is affinely  equivalent to the unit ball $\mathbb{B}^n$, therefore $F_h$
 is equivalent to the Klein metric (\ref{ex.Klein}), which is Riemannian. 
 Suppose conversely that $F_h$ is Riemannian, that is $F_h = \sqrt{\g}$ for some Riemannian metric in $\mathcal{U}$.
 Then $(\mathcal{U},\g)$ is a complete, simply-connected Riemannian manifold with constant sectional curvature $\K=-1$,
 therefore $(\mathcal{U},\g)$ is isometric to the hyperbolic space $\mathbb{B}^n$ with its Klein metric and it follows that 
 $\mathcal{U}$ is an ellipsoid.
\end{proof}

\begin{corollary}
Let \  $\mathcal{U} \subset \r^n$ be a bounded smooth strongly convex domain.
Assume that there exists a discrete group $\Gamma$ of projective transformations leaving $\mathcal{U}$ invariant and acting 
freely with compact quotient $M= \mathcal{U}/\Gamma$ 
(the convex domain $\mathcal{U}$ is then said to be \emph{divisible}). Then  $\mathcal{U}$ is an ellipsoid.
\end{corollary}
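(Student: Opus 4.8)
The plan is to push the Hilbert metric down to the compact quotient $M = \mathcal{U}/\Gamma$ and then invoke a rigidity theorem for closed Finsler manifolds of constant negative flag curvature. First I would observe that, by Proposition \ref{prop.disthilb}.b, the Hilbert distance $\dhilb$ — and hence the Hilbert Finsler structure $F_h$ — is invariant under projective transformations. Since every element of $\Gamma$ is a projective transformation preserving $\mathcal{U}$, the group $\Gamma$ acts on $(\mathcal{U}, F_h)$ by isometries. As $\dhilb$ is complete (Proposition \ref{prop.disthilb}.c) the metric space $(\mathcal{U},\dhilb)$ is proper, so a discrete group of isometries acts properly discontinuously; together with freeness and compactness of the quotient this makes $M = \mathcal{U}/\Gamma$ a closed (compact, boundaryless) smooth manifold with covering map $\pi : \mathcal{U} \to M$ a local diffeomorphism. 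Because $\mathcal{U}$ is bounded with smooth strongly convex boundary, $F_h$ is a smooth, strongly convex, complete Finsler metric, and these properties — being local and isometry-invariant — descend to a smooth strongly convex Finsler metric $\bar F_h$ on $M$ for which $\pi$ is a local isometry.

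Next I would transport the curvature computation to $M$. The flag curvature is a pointwise invariant of the Finsler structure, so the identity $\K \equiv -1$ established above for $F_h$ on $\mathcal{U}$ passes through the local isometry $\pi$ and gives $\K \equiv -1$ for $\bar F_h$ on $M$. Thus $(M,\bar F_h)$ is a closed Finsler manifold of constant flag curvature $-1$.

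The crux of the argument is then a rigidity theorem of Akbar-Zadeh: a closed Finsler manifold of constant negative flag curvature is necessarily Riemannian (indeed, of constant sectional curvature). Applying this to $(M,\bar F_h)$ shows that $\bar F_h$ is Riemannian. Since being Riemannian means that each indicatrix is an ellipsoid — a pointwise condition — and $\pi$ is a local isometry, the lifted metric $F_h$ on $\mathcal{U}$ is Riemannian as well. Finally, by the previous theorem (the Schoenberg--Kay characterization), the Hilbert metric on a bounded smooth strongly convex domain is Riemannian if and only if the domain is an ellipsoid; hence $\mathcal{U}$ is an ellipsoid.

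I expect the main obstacle to be the justification of Akbar-Zadeh's theorem, which is the genuinely deep input: constant negative flag curvature alone does not force a general (for instance incomplete or non-compact) Finsler metric to be Riemannian, and it is precisely the closedness of the quotient $M$ that makes the rigidity available. The remaining steps — projective invariance of $F_h$, properness of the $\Gamma$-action, and the descent of smoothness, strong convexity and constant curvature to $M$ — are routine verifications.
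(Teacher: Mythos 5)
Your proof is correct and follows essentially the same route as the paper: descend the Hilbert metric to the compact quotient, apply Akbar-Zadeh's rigidity theorem (Theorem \ref{th.Akbar}) to conclude the quotient metric is Riemannian, lift that back to $\mathcal{U}$, and finish with the Schoenberg--Kay characterization of ellipsoids. Your version simply spells out the routine verifications (projective invariance, properness of the $\Gamma$-action, descent of smoothness and curvature) that the paper leaves implicit.
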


\begin{proof}
 The Hilbert metric induces a smooth and strongly convex Finsler metric $F$ on the quotient $M= \mathcal{U}/\Gamma$.
 The compact Finsler manifold $(M,F)$ has constant  negative flag curvature, it is therefore Riemannian  by Theorem \ref{th.Akbar}.
 It now follows from the previous theorem  that the universal cover $\mathcal{U} = \widetilde{M}$  is an ellipsoid.
 \end{proof}

This corollary is a special case of a  result of Benz\'ecri \cite{Benzecri}. 
There are several generalizations of this result and divisible convex  domains have been  
the subject of intensive research in recent years, see \cite[Section 7]{RGuo2013} 
 and \cite{Marquis2013} for a discussion.

\section{The Funk-Berwald characterization of Hilbert geometries} \label{sec.charH}

We are now in a position to prove our characterization theorem for Hilbert metrics. 

\begin{theorem}\label{th.CharHilb}
Let $F$ be a strongly convex Finsler metric on a bounded convex domain  $\mathcal{U} \subset \r^n$.
Then $F$ is the Hilbert metric of that domain if and only if $F$ is projectively flat, complete 
and has constant flag curvature $\K = -1$.
\end{theorem}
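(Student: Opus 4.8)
The forward implication is already in hand: by Examples~\ref{ex.Fplat1} the Hilbert metric is projectively flat, by Proposition~\ref{prop.disthilb} it is complete, and the computation of the previous section shows that its flag curvature is constant equal to $-1$. So the entire content is the converse, and the plan is to recover the Hilbert geodesics from the curvature hypothesis and then read off the Lagrangian $F$ itself.

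Assume then that $F$ is projectively flat, complete, and satisfies $\K \equiv -1$. Fix $p \in \mathcal{U}$ and a direction $\xi \in T_p\mathcal{U}$, and let $\beta(s) = p + \varphi(s)\xi$ be the $F$-unit-speed geodesic with $\beta(0)=p$; projective flatness guarantees it has this affine form, with $\varphi(0)=0$ and $\dot\varphi(0) = 1/F(p,\xi)$. First I would feed this geodesic into Proposition~\ref{Prop.Funk}: its proof yields the identity $\{\varphi,s\} = 2\,\Sc(\beta(s),\dot\beta(s))$, while Corollary~\ref{cor.berwald1} identifies $\Sc = \K \cdot F^2$. Since the curve has unit speed, $F(\beta(s),\dot\beta(s)) = 1$, so the hypothesis $\K \equiv -1$ gives $\Sc(\beta(s),\dot\beta(s)) = -1$ and hence the Schwarzian is the constant
$$
\{\varphi, s\} = 2\,\Sc(\beta(s),\dot\beta(s)) = -2 .
$$

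Next I would solve this equation. By the theory of the Schwarzian derivative (Lemma~\ref{lem.schwarzian}), every solution of $\{\varphi,s\} = -2$ is a M\"obius function of the two solutions $e^{\pm s}$ of $w'' - w = 0$, that is $\varphi(s) = (ae^s+be^{-s})/(ce^s+de^{-s})$ with $ad-bc\neq 0$. The normalization $\varphi(0)=0$ forces $a+b=0$, so after dividing through I may write $\varphi(s) = (e^s-e^{-s})/(Ce^s+De^{-s})$, which is monotone increasing since its derivative equals $2(C+D)/(Ce^s+De^{-s})^2$. The constants $C,D$ are pinned down by completeness: $\beta$ extends to a full metric line, and a unit-speed geodesic of a complete metric cannot accumulate at an interior point of the bounded set $\mathcal{U}$ (otherwise $d_F(p,\beta(s))$ would stay bounded, contradicting that the additive distance of a projectively flat metric gives $d_F(p,\beta(s))=|s|$), so $\beta(s)$ must converge to $\partial\mathcal{U}$ as $s\to\pm\infty$. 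The limits $\lim_{s\to+\infty}\varphi(s)=1/C$ and $\lim_{s\to-\infty}\varphi(s)=-1/D$ therefore locate the two endpoints of the chord through $p$ in direction $\xi$, which by definition of the Funk metric and equation (\ref{eq.apq}) are $p+\xi/F_f(p,\xi)$ and $p-\xi/F_f(p,-\xi)$; comparing gives $C=F_f(p,\xi)$ and $D=F_f(p,-\xi)$. Thus $\varphi$ coincides exactly with the Hilbert geodesic function (\ref{eq.HilbGeodesic1a}).

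Finally, the unit-speed condition closes the argument: since $\dot\varphi(0) = 2/(C+D)$ we obtain
$$
F(p,\xi) = \frac{1}{\dot\varphi(0)} = \frac{C+D}{2} = \frac{F_f(p,\xi)+F_f(p,-\xi)}{2} = F_h(p,\xi),
$$
and as $(p,\xi)$ was arbitrary this gives $F = F_h$. The main obstacle, and the only step that genuinely uses completeness rather than the purely local curvature identity, is the claim that $\beta(s)$ limits to $\partial\mathcal{U}$ at both ends and that these limits are precisely the Funk boundary points: this is where I would be most careful, justifying both that a complete unit-speed ray in a bounded domain must escape to the boundary and that the monotone affine parametrization sweeps the entire chord so that the two M\"obius constants are correctly identified.
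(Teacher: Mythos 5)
Your proposal is correct and follows essentially the same route as the paper's proof: both arguments rest on Proposition~\ref{Prop.Funk} forcing the constant Schwarzian $\big\{ \varphi , s\big\} = -2$ along unit-speed affine geodesics, on Lemma~\ref{lem.schwarzian} to control the resulting M\"obius ambiguity, and on completeness to identify the asymptotic endpoints $\varphi(\pm\infty)$ with the Funk quantities $1/F_f(p,\xi)$ and $-1/F_f(p,-\xi)$. The only difference is presentational: the paper phrases it as a uniqueness statement (any two complete, projectively flat metrics with $\K=-1$ have the same $\varphi$, and $F_h$ is one of them), whereas you solve the Schwarzian equation explicitly and reconstruct $F(p,\xi) = \tfrac{1}{2}\bigl(F_f(p,\xi)+F_f(p,-\xi)\bigr) = F_h(p,\xi)$ directly --- same ingredients in the same order.
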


\medskip

\begin{proof}
Since the Hilbert metric in a bounded  
convex domain $\mathcal{U} \subset \r^n$ is complete with flag curvature $\K = -1$, we can reformulate
the theorem as follows:
\emph{Let $F_1$ and $F_2$ be two  strongly convex  Finsler metrics with constant flag curvature $\K = -1$ on a bounded  
 convex domain $\mathcal{U} \subset \r^n$. Suppose that  $F_1$ and $F_2$ are complete and projectively flat, then $F_1 = F_2$.}

\medskip

To prove that statement, let us consider two 
complete and projectively flat strongly convex Finsler metrics  $F_1$ and $F_2$ in $\mathcal{U}$ such that
$$
 \K_{F_1}(x,y) =  \K_{F_2}(x,y) =-1
$$
for any $(x,y) \in T\mathcal{U}$. Fix a point  $p\in \mathcal{U}$ and a non-zero vector $\xi \in T_p\mathcal{U}$,
and let 
$$
  \beta_i (s) = p + \varphi_i(s) \xi
$$
be the unit speed geodesic for the metric $F_i$ starting at $p$ in the direction $\xi$ for $i=1,2$.
In particular we have $\varphi_1(0)=\varphi_2(0) = 0$ and $\dot \varphi_i(s) >0$.

Since $F_i$ is forward and backward complete,  we have
$$
 \beta_i(\pm \infty) = \lim_{s \to \pm\infty}  \beta_i(s) \in \partial \mathcal{U},
 \qquad (i=1,2).
$$  
By the definitions of the tautological and reverse
tautological Finsler structures, we obtain from Equation (\ref{eq.FunkGeodesic1})  that
\begin{equation}\label{eq.relFinf}
 \varphi_i(+\infty) = \frac{1}{F_f(p,\xi)} \quad \text{and} \quad
  \varphi_i(-\infty) = \frac{-1}{F^*_f(p,\xi)},
\end{equation}
for $i=1,2$. In particular $\varphi_1(+\infty) =\varphi_2(+\infty)$ and 
$\varphi_1(-\infty) =\varphi_2(-\infty)$.

Because $\beta_1$ and $\beta_2$ are unit speed geodesics, we have from Proposition (\ref{Prop.Funk}).
$$
 \frac{1}{2} \big\{ \varphi_i , s\big\} = F_i^2(\beta_i(s), \beta_i(s))\cdot \K_{F_i}(\beta_i(s), \dot\beta_i(s))
 =-1.
$$
In particular $\big\{ \varphi_1 , s\big\} = \big\{ \varphi _2, s\big\}$.
From  Lemma \ref{lem.schwarzian}, we have therefore
$$
   \varphi_2(s)  =  \frac{A\varphi_1(s) +B}{C\varphi_1(s) +D}.
$$
Since $\varphi_1(0)=\varphi_2(0) = 0$ we have $B=0$; and Equation  (\ref{eq.relFinf}) implies the
relations
$$
 A-D = 
 \frac{C}{F_f(p,\xi)} = \frac{-C}{F^*_f(p,\xi)}.
$$
It follows that $C=0$ and $A=D$ and therefore $\varphi_1(s) =\varphi_2(s)$
for every $s\in \r$, which finally implies that $F_1(p,\xi) = F_2(p,\xi)$
for all $(p,\xi) \in T\mathcal{U}$.
\end{proof}

 \appendix
\section{On Finsler metrics  with constant flag curvature}  \label{sec.Related}

The previous discussion suggests the following program: \emph{Describe all 
Finsler metrics with constant flag curvature. }

\medskip

This program is quite broad and it is not clear if  a complete answer will be at hand in the near future, 
but many examples and partial classifications have  been obtained. 
Let us only mention a few classic results. For the flat case, we have the following:

\begin{theorem}
 Let $(M,F)$ be a smooth manifold with a strongly convex Finsler metric 
 of constant flag curvature $\K = 0$.
 Suppose that either 
 \begin{enumerate}[i.)]
  \item $F$  is reversible and locally  projectively flat, or
  \item $F$ is Randers, or
  \item $F$ is Berwald, or
  \item $M$ is compact.
\end{enumerate}
Then $(M,F)$ is flat, that is it is locally isometric to a Minkowski space.
\end{theorem}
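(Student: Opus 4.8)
The plan is to treat the four hypotheses separately, since each rests on a different circle of ideas; in cases (i)--(iii) the conclusion ``locally Minkowski'' is local, so we may freely pass to a coordinate chart, whereas case (iv) is genuinely global. Case (i) is essentially already established: since being locally Minkowski is a local property, we restrict to a neighborhood isometric to a projectively flat structure on a convex region of $\r^n$, where the hypotheses become exactly those of Corollary \ref{cor.Knul}. That corollary asserts that a strongly convex reversible projectively flat metric with $\K = 0$ is locally Minkowski, so (i) requires no further work.

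Next I would dispose of the Berwald case (iii), which is clean. By the discussion following Definition \ref{def.berwald} (see \cite{MatveevTroyanov, Sz1}), the associated connection may be taken to be the Levi-Civita connection of a Riemannian metric $h$; since $F$ is Berwald, its spray coefficients are $G^i = \tfrac12\Gamma^i_{jk}(x)\,y^jy^k$, with $\Gamma$ the Christoffel symbols of $h$, so the Finslerian spray coincides with the geodesic spray of $h$. Consequently the operator $\RR_y$ computed from (\ref{eq.Rik}) is precisely the Riemannian curvature operator $W \mapsto R^h(W,y)y$ of $h$. Now $\K \equiv 0$ forces $\g_y(\RR_y(W),W) = 0$ for all $W$ by (\ref{Flag1}), which says that every sectional curvature of $h$ vanishes, hence $h$ is flat. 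Working in Euclidean coordinates for $h$, the parallel transport $P_\gamma$ acts as the identity on components, and the Berwald relation $F(y,P_\gamma\xi) = F(x,\xi)$ then shows that $F$ is independent of $x$, i.e. $F$ is locally Minkowski.

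For the Randers case (ii) I would use the Zermelo navigation description (Example \ref{ex:zerm}) together with the classification of Randers metrics of constant flag curvature due to Bao, Robles and Shen. Expressing $F$ through navigation data $(h,W)$, one finds that constant flag curvature $\K = 0$ forces $h$ to have constant sectional curvature and $W$ to be an infinitesimal homothety; the scaling identity $\mathcal{L}_W h = 2\sigma h$ together with the constancy of the scalar curvature gives $\sigma\,R_h = 0$, and the curvature relation of the classification then pins down the flat case $h$ flat, $W$ Killing precisely when $\K = 0$. The Zermelo transform of a flat Minkowski structure by such a field is again locally Minkowski, which settles (ii).

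Finally, case (iv), the compact case, is the main obstacle. Here projective flatness is not assumed and there is no local model, so the curvature formula (\ref{eq.Kprojflat}) and the Schwarzian techniques of the previous sections are unavailable. The statement is a theorem of Akbar-Zadeh: a compact strongly convex Finsler manifold of constant flag curvature $\K = 0$ is locally Minkowski (and is Riemannian of constant curvature when $\K < 0$). Its proof is global and analytic, resting on the structure equations of the Chern connection and a Bochner-type integration over the compact unit tangent bundle, and I would quote it directly (Theorem \ref{th.Akbar}, see also \cite{BCS}).
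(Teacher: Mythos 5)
Your proposal is correct, and its overall architecture matches the paper's, whose entire proof is a list of citations: case (i) is Corollary \ref{cor.Knul}, case (ii) is attributed to Shen \cite{Shen2003b}, case (iii) is called classical with a pointer to Theorem 2.3.2 of \cite{ChernShen}, and case (iv) is Akbar-Zadeh \cite{Akbar} (Theorem \ref{th.Akbar}). The genuine differences are two. First, for the Berwald case you supply an actual argument (flatness of the associated Riemannian metric $h$, then rigidity of $F$ under parallel transport in Euclidean coordinates for $h$) where the paper only cites the literature; this is a worthwhile addition, but one step needs tightening: from $\K \equiv 0$ and (\ref{Flag1}) you get $\g_y(\RR_y(W),W)=0$ for all $W$, and since $\g_y \neq h$ this does not literally say that ``every sectional curvature of $h$ vanishes.'' The correct deduction is that $\RR_y$ is self-adjoint with respect to $\g_y$, so the vanishing of its quadratic form forces $\RR_y = 0$, that is $R^h(W,y)y = 0$ for all $y \neq 0$ and all $W$; the vanishing of all Jacobi operators of $h$ then kills the full curvature tensor by polarization and the symmetries of the Riemann tensor, and only then is $h$ flat. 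Second, for the Randers case you invoke the Bao--Robles--Shen navigation classification, whereas the paper cites Shen's paper on \emph{projectively flat} Randers metrics of constant flag curvature; since case (ii) assumes no projective flatness, your citation is in fact the better-suited one, though in both treatments this case is quoted rather than proved. Cases (i) and (iv) are handled identically in the two arguments: reduction to Corollary \ref{cor.Knul}, and a direct appeal to Akbar-Zadeh's global theorem, respectively.
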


The first case is Corollary \ref{cor.Knul}. The second case is due to Shen
\cite{Shen2003b}, while the  case of a Berwald metric is classical (see Theorem 2.3.2 in \cite{ChernShen}).
Finally the compact case is due to  Akbar-Zadeh  \cite{Akbar}.

\medskip

Another notable result is
\begin{theorem}[Akbar-Zadeh \cite{Akbar}] \label{th.Akbar}
 On a smooth compact manifold, every strongly convex Finsler metric 
 with negative flag curvature is a Riemannian metric.
\end{theorem}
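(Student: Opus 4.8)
The plan is to follow Akbar-Zadeh's Bochner-type argument on the unit tangent sphere bundle. Since rescaling the metric by a constant multiplies the (constant, negative) flag curvature by a reciprocal constant, I may normalize so that $\K \equiv -1$. The goal is to show $F$ is Riemannian, and for this I would invoke \emph{Deicke's theorem}: a strongly convex Minkowski norm is Euclidean if and only if its \emph{mean Cartan torsion} vanishes. Applied fiberwise to each norm $F_x$ on $T_xM$, this reduces the theorem to proving that the mean Cartan torsion $I_i(x,y) = g^{jk}A_{ijk}$ vanishes identically, where $A_{ijk} = \tfrac14 \partial^3 F^2/\partial y^i\partial y^j\partial y^k$ is the Cartan tensor (so $A\equiv 0$, equivalently $g_{ij}$ is independent of $y$, means Riemannian).

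The decisive role of compactness is that the unit tangent bundle $SM = \{(x,y)\in TM^0 : F(x,y)=1\}$ is then compact and carries a natural Liouville measure $\mu$ invariant under the geodesic flow, i.e. under the flow of the spray $G$ of (\ref{eq.Spray2}) restricted to $SM$. Writing $X$ for this generator and $I_{i|0}$ for the covariant derivative of $I_i$ along $X$ (the \emph{mean Landsberg tensor}), the next step is to establish the Jacobi-type identity satisfied by the mean Cartan torsion in a space of constant flag curvature $\lambda$, of the form $I_{i|0|0} + \lambda\, I_i = Q_i$, where $Q_i$ gathers terms quadratic in the torsion. This is the point at which constancy of the flag curvature enters; deriving it requires the Bianchi identities of the Chern (or Cartan) connection, machinery beyond this chapter, which I would import from the Finsler literature.

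With this in hand, set $f = \g_y(I,I) = g^{ij}I_iI_j \geq 0$, a nonnegative function on $SM$. Using that the fundamental tensor is parallel along the flow, one computes $X^2 f = 2\,\g_y(I_{|0},I_{|0}) - 2\lambda f + 2\,\g_y(I,Q)$. Because $\mu$ is flow-invariant, $X$ is $\mu$-divergence-free, so $\int_{SM} X^2 f\, d\mu = 0$. For $\lambda=-1$ the integrated identity becomes $\int_{SM}\big(2\|I_{|0}\|^2 + 2\|I\|^2 + 2\,\g_y(I,Q)\big)\,d\mu = 0$, in which the first two terms are nonnegative; once the quadratic remainder is controlled this forces $I_{|0}\equiv 0$ and $I \equiv 0$ on $SM$, hence on all of $TM^0$ by homogeneity, and Deicke's theorem yields that $F$ is Riemannian.

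The main obstacle is the second step: obtaining the Jacobi equation with precisely the right sign on the linear term and verifying that the quadratic remainder $Q_i$ does not destroy the definiteness after integration. The flat case is instructive as a sanity check: there ($\lambda=0$) the same computation only yields $I_{|0}\equiv 0$, i.e. the Landsberg condition, which for a compact flat space gives \emph{locally Minkowski} rather than Riemannian; it is exactly the extra nonnegative term $-2\lambda\|I\|^2 = 2\|I\|^2$ produced by negative curvature that upgrades the conclusion to $I\equiv 0$. Thus the technical heart of the proof is controlling the quadratic torsion terms and setting up the invariant integration on $SM$ carefully.
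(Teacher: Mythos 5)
The paper itself contains no proof of this statement: it is quoted as an external result, with references to Akbar-Zadeh \cite{Akbar} and to \cite[page 162]{Shen2001a}, so your proposal has to be measured against that classical argument. You have reconstructed its correct skeleton (reduce to vanishing of the Cartan torsion, exploit compactness together with a second-order identity for the torsion along the geodesic flow), but what you call ``the main obstacle'' is not a deferrable technicality --- it is the entire mathematical content of the theorem, and your proposal leaves it open. You postulate $I_{i|0|0} + \lambda I_i = Q_i$ with an unidentified remainder $Q_i$ ``quadratic in the torsion'', and with $Q_i$ unknown the integrated identity $\int_{SM}\bigl(2\|I_{|0}\|^2 + 2\|I\|^2 + 2\g_y(I,Q)\bigr)\,d\mu = 0$ yields nothing: a quadratic term can perfectly well be negative and of the same order as $\|I\|^2$. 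So, as written, the argument does not close; this is a genuine gap.

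For the record, the key fact is cleaner than you guess. For \emph{constant} flag curvature $\lambda$, the Bianchi identities of the Chern connection give exactly $A_{ijk|0|0} + \lambda F^2 A_{ijk} = 0$, and contracting with $g^{jk}$ (which is parallel along the flow) gives $I_{i|0|0} + \lambda F^2 I_i = 0$ with \emph{no} remainder: your $Q_i$ is identically zero. Constancy of $\K$ is precisely what kills the extra terms involving derivatives of the curvature, so your silent normalization $\K\equiv -1$ is not cosmetic: the theorem being quoted (and the way the paper uses it, for quotients of Hilbert geometries with $\K=-1$) is Akbar-Zadeh's constant-curvature result, and with merely negative non-constant curvature the identity, hence the argument, breaks down. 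Once the exact identity is in hand, your Bochner integration does work, but it is a detour: along each unit-speed geodesic the identity reads $I_{i|0|0} = I_i$, whose solutions along the flow are spanned by $\mathrm{e}^{s}$ and $\mathrm{e}^{-s}$, and boundedness of $I$ on the compact unit tangent bundle forces $I\equiv 0$ pointwise --- no invariant Liouville measure or divergence-free property of the generator ever needs to be established. Moreover, running this growth argument on the full Cartan tensor $A_{ijk}$ rather than on the mean torsion shows directly that $\partial g_{ij}/\partial y^k = 0$, so one can dispense with Deicke's theorem altogether. Your $\lambda = 0$ sanity check is consistent with this picture, which confirms that the approach is the right one; what is missing is its technical heart.
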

A proof is also given in \cite[page 162]{Shen2001a}. 
For positive constant curvature, we have the following recent result by  Kim and  Min  \cite{Kim2009}:
\begin{theorem}
 Any strongly convex  reversible Finsler  metric with positive constant flag curvature is Riemannian.
\end{theorem}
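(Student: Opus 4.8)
The plan is to prove the equivalent statement that the Cartan torsion of $F$ vanishes identically; by a classical theorem of Deicke this is the same as $F$ being Riemannian, and it even suffices to kill the \emph{mean} Cartan torsion $I_i=g^{jk}C_{ijk}$, where $C_{ijk}=\tfrac14\,\partial^3(F^2)/\partial y^i\partial y^j\partial y^k$. After rescaling I may assume the constant flag curvature is $\K=+1$. The hypotheses give two parities that I would record at the outset: the functions $g_{ij}(x,y)$ and the spray coefficients $G^k(x,y)$ are \emph{even} in $y$ (so that every reversed curve $s\mapsto\gamma(-s)$ is again a geodesic), whereas $C_{ijk}(x,y)$ and $I_i(x,y)$ are \emph{odd} in $y$, i.e. $I_i(x,-y)=-I_i(x,y)$. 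The whole difficulty is thus to show $I\equiv 0$ on $T\mathcal U^0$.

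First I would extract an evolution law for $I$ along geodesics from the constant curvature condition. For $\K=+1$ the Riemann curvature is $\RR_y=\mathrm{Id}-\PP_y$; differentiating this and feeding it into the Bianchi identities of the Chern connection (as in \cite{Shen2001a,Shen2001b}) yields a closed second-order equation for the mean torsion along a unit-speed geodesic $\gamma$. Writing $\mathbf I(s)$ for $I$ evaluated at $(\gamma(s),\dot\gamma(s))$ and reading its components in a frame parallel along $\gamma$, this equation is of harmonic-oscillator type,
\[
 \ddot{\mathbf I}+\mathbf I=0 ,
\]
with the mean Landsberg tensor playing the role of $\dot{\mathbf I}$. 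This is precisely the computation underlying the Akbar-Zadeh theorem (Theorem \ref{th.Akbar}), with one decisive sign change: for $\K<0$ the same scheme produces $\ddot{\mathbf I}-\mathbf I=0$, whose solutions that remain bounded on a compact manifold must vanish, which is how the negative case is won. For $\K=+1$, however, the solutions $\mathbf I(s)=\mathbf I(0)\cos s+\dot{\mathbf I}(0)\sin s$ are automatically bounded and $2\pi$-periodic, so the boundedness argument gives nothing.

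I expect this to be the main obstacle, and it is exactly where reversibility must be used in an essential way: the oscillatory modes genuinely survive for non-reversible metrics of positive constant curvature, as Bryant's family of such metrics on $S^2$ shows, so no purely soft or purely local argument can succeed. The plan for the final step is to play the odd symmetry $I_i(x,-y)=-I_i(x,y)$ against the rigid focusing forced by $\K=+1$ --- along every unit geodesic the first conjugate point occurs at parameter $\pi$, and reversibility makes the associated geodesic reflection an isometry. Comparing the torsion transported along $\gamma$ with that transported along its reverse $s\mapsto\gamma(-s)$, and matching the two in the common parallel frame at the conjugate parameter $s=\pi$, should force $\mathbf I(0)=\dot{\mathbf I}(0)=0$ at every point and in every direction; equivalently, reversibility upgrades $(M,F)$ to a Finsler symmetric space of positive flag curvature, and such a space is Riemannian. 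Turning this focusing-and-matching heuristic into a clean argument is the core of the Kim--Min theorem \cite{Kim2009}, and is the step I would expect to cost the most work.
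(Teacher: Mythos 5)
A preliminary caveat on the comparison itself: the paper does not prove this theorem at all --- it quotes it from Kim and Min \cite{Kim2009} --- so your attempt can only be judged on its own terms. Its preparatory layer is sound and well informed: Deicke's theorem does reduce the claim to the vanishing of the mean Cartan torsion $I_i$; reversibility does give the parities you list; the Akbar-Zadeh identities do yield $\ddot{\mathbf I}+\mathbf I=0$ along unit geodesics when $\K\equiv +1$, with the mean Landsberg tensor in the role of $\dot{\mathbf I}$; and you correctly diagnose both why the boundedness argument behind Theorem \ref{th.Akbar} dies in positive curvature and why Bryant's non-reversible examples \cite{Bryant1997} forbid any soft replacement.

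The gap is the entire final step, and it is not merely unfinished: the heuristic you offer in its place provably cannot work as described. First, the ``matching'' of torsion along $\gamma$ and its reverse yields identities, never constraints. For a reversible metric the connection coefficients are even in $y$, so a parallel frame along $\gamma$ serves (traversed backwards) as a parallel frame along $\sigma(s)=\gamma(-s)$, and oddness of $I$ gives $\mathbf I_\sigma(s)=-\mathbf I_\gamma(-s)=-\mathbf I(0)\cos s+\dot{\mathbf I}(0)\sin s$. That is again a solution of the same oscillator equation, for every choice of $\bigl(\mathbf I(0),\dot{\mathbf I}(0)\bigr)$, and evaluating both branches at $s=\pi$ (or anywhere else) closes up consistently; parity, the ODE, and geodesic reversal simply do not interact, so no contradiction can come from them alone. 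Second, the sub-claim that ``reversibility makes the associated geodesic reflection an isometry'' is false, not just unproved: Hilbert geometries are reversible, and if their geodesic reflections were isometries they would be symmetric Finsler spaces, hence Berwald, hence Riemannian (a Berwald metric of non-vanishing flag curvature is Riemannian, by Numata's theorem), contradicting the Schoenberg--Kay theorem proved earlier in this chapter for every non-ellipsoidal domain. Since an isometric-reflection structure would instantly give the conclusion by that same chain, this sub-claim is not a lemma but a restatement of the theorem, which your plan then openly defers to \cite{Kim2009}.

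A further point your sketch should have made explicit: the mechanism of conjugate points at parameter $\pi$ presupposes that every geodesic extends at least that far, i.e.\ completeness, which appears neither in the statement nor in your hypotheses. It cannot be dispensed with: by the local description of projectively flat metrics of curvature $\K=+1$ in \cite{Shen2003a}, one may prescribe an arbitrary reversible non-Euclidean Minkowski norm, together with an odd projective factor, as the data of such a metric at a single point; this produces local reversible non-Riemannian metrics with $\K\equiv +1$. The theorem is therefore irreducibly global, and any correct write-up must identify exactly where completeness enters.
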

In the case of  projectively flat reversible Finsler structures on the  $2$-sphere, the result is due to 
R. Bryant \cite{Bryant1997}, who in fact classified all  projectively flat Finsler structures on the $2-$sphere
with flag curvature $\K = +1$.

\medskip

Non reversible projectively flat Randers metrics with constant curvature have been classified by  Shen in  \cite{Shen2003b}.
For further examples and discussions, we refer to \cite[Chapter 8]{ChernShen}, 
 \cite[\S 11.2]{Shen2001a}, \cite[Chapter 9]{Shen2001b}, \cite{Shen2003a,Mo2011}
and the recent survey  \cite{Guo2012} by Guo,  Mo and  Wang.

%_________
\section{On the Schwarzian derivative}
 
The Schwarzian derivative is a third order non-linear differential operator that is invariant under the group of one-dimensional
projective transformations. It appeared first in complex analysis  in the context of the  Schwarz-Christoffel transformations
formulae. It is defined as follows.

\begin{definition}
 Let $\varphi$ be a $C^3$ function of the real variable $s$, or a holomorphic function of the complex variable $s$. 
 Its \emph{Schwarzian derivative}\index{Schwarzian derivative} is defined as
 $$
   \big\{ \varphi (s), s\big\} = \frac{\dddot \varphi (s)}{\dot \varphi (s)} - \frac{3}{2} \left( \frac{\ddot   \varphi (s)}{\dot \varphi (s)}\right)^2
   = \frac{d}{ds} \left( \frac{\ddot \varphi (s)}{\dot \varphi (s)}\right) - \frac{1}{2} \left( \frac{\ddot \varphi (s)}{\dot \varphi (s)}\right)^2,
 $$
 where the dots represent differentiation with respect to $s$.
\end{definition}
 
For instance $\big\{e^{\lambda s}, s\big\} = - \frac{1}{2}\lambda^2$, \  $\big\{\frac{1}{s} ,s\big\} = 0$ and 
$\big\{\tan (\lambda s), s\big\} =2\lambda^2$. 
The fundamental property of the  Schwarzian derivative is contained in the following

\begin{lemma}\label{lem.schwarzian}
 (i) Let $u(s)$ and $v(s)$ be two linearly independent solutions to the equation
\begin{equation}\label{eq.w}
 \ddot w(s) +  \frac{1}{2}\rho(s) \, w(s) = 0.
\end{equation}
If $v\neq 0$, then $\ds \varphi(s) = u(s)/v(s)$ is a solution to
$$
    \big\{ \varphi (s), s\big\} = \rho (s).
$$
 (ii) Let $\varphi_1(s)$ and $\varphi_2(s)$ be two non-singular functions of $s$. Then 
 $$\big\{ \varphi_1(s) , s\big\} = \big\{ \varphi _2(s), s\big\}$$ 
 if and only if there exist constants $A,B,C,D \in \r$
 with $AD-BC \neq 0$ such that:
 $$
   \varphi_2  =  \frac{A\varphi_1 +B}{C\varphi_1 +D}.
 $$
\end{lemma}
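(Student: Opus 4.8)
The plan is to prove part (i) by a direct computation exploiting the constancy of the Wronskian, and then to deduce part (ii) from (i) together with a converse construction that recovers a basis of solutions of \eqref{eq.w} from a given function with prescribed Schwarzian. For (i), I would first observe that because \eqref{eq.w} has no first-order term, the Wronskian $W = \dot u\, v - u\,\dot v$ is constant: $\dot W = \ddot u\, v - u\,\ddot v = -\tfrac12\rho\,(uv - uv) = 0$, and $W \neq 0$ by linear independence. Writing $\varphi = u/v$ gives $\dot\varphi = W/v^2$, hence $\ddot\varphi/\dot\varphi = -2\dot v/v$. Differentiating once more and substituting $\ddot v = -\tfrac12\rho\,v$ yields
\begin{equation*}
 \big\{\varphi(s),s\big\} = \frac{d}{ds}\!\left(\frac{\ddot\varphi}{\dot\varphi}\right) - \frac12\left(\frac{\ddot\varphi}{\dot\varphi}\right)^2 = -2\,\frac{\ddot v}{v} = \rho(s),
\end{equation*}
which is the claim.

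The converse construction is the technical heart of the argument. Given a non-singular $\varphi$ (so that $\dot\varphi$ does not vanish) with $\big\{\varphi,s\big\} = \rho$, I would set $v = (\dot\varphi)^{-1/2}$ and $u = \varphi\,v$. A direct differentiation shows that $\ddot v + \tfrac12\rho\,v = 0$: indeed, writing $p = \dot\varphi$ one computes $\ddot v = \tfrac34 p^{-5/2}\dot p^2 - \tfrac12 p^{-3/2}\ddot p$, and the identity $p^{-1}\ddot p - \tfrac32 p^{-2}\dot p^2 = \dddot\varphi/\dot\varphi - \tfrac32(\ddot\varphi/\dot\varphi)^2 = \rho$ is precisely the definition of the Schwarzian. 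A second short computation, using $\ddot\varphi\,v + 2\dot\varphi\,\dot v = 0$, gives $\ddot u + \tfrac12\rho\,u = 0$ as well. Thus $u$ and $v$ are solutions of \eqref{eq.w}; they are linearly independent because $\varphi = u/v$ is non-constant, and by construction $\varphi = u/v$.

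To prove (ii) I would combine (i) with this converse. For the ``only if'' direction, suppose $\big\{\varphi_1,s\big\} = \big\{\varphi_2,s\big\} = \rho$. Applying the converse construction to each $\varphi_i$ produces bases $(u_1,v_1)$ and $(u_2,v_2)$ of the same two-dimensional solution space of \eqref{eq.w}, so there is an invertible change of basis $u_2 = A u_1 + B v_1$, $v_2 = C u_1 + D v_1$ with $AD - BC \neq 0$; dividing gives $\varphi_2 = u_2/v_2 = (A\varphi_1 + B)/(C\varphi_1 + D)$. For the ``if'' direction, given such a M\"obius relation I would set $\rho = \big\{\varphi_1,s\big\}$, write $\varphi_1 = u_1/v_1$ via the converse, and note that $u_2 = A u_1 + B v_1$ and $v_2 = C u_1 + D v_1$ again solve \eqref{eq.w}, are independent since $AD - BC \neq 0$, and satisfy $u_2/v_2 = \varphi_2$; part (i) then gives $\big\{\varphi_2,s\big\} = \rho = \big\{\varphi_1,s\big\}$. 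The main obstacle is the converse construction of the second paragraph: it is the only place where the precise third-order form of the Schwarzian is used, and it requires the non-singularity hypothesis in order that $(\dot\varphi)^{-1/2}$ and the ratios $u/v$ be well defined throughout the interval.
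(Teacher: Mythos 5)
Your proposal is correct and follows essentially the same route as the paper: part (i) by direct computation, and part (ii) via the substitution $u = \varphi\,(\dot\varphi)^{-1/2}$, $v = (\dot\varphi)^{-1/2}$, which turns functions with Schwarzian $\rho$ into fundamental systems of solutions of \eqref{eq.w}, so that the M\"obius relation drops out of a change of basis in the two-dimensional solution space. The only detail the paper adds that you should too: since a non-singular $\varphi$ may have $\dot\varphi < 0$, one first uses $\big\{-\varphi,s\big\} = \big\{\varphi,s\big\}$ to reduce to the case $\dot\varphi > 0$, so that $(\dot\varphi)^{-1/2}$ is real.
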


\begin{proof}  The proof of (i) follows from a direct calculation. To prove (ii), we first observe that 
$\big\{-\varphi , s\big\} = \big\{ \varphi , s\big\}$ and we may therefore  assume $\dot \varphi_i > 0$. Let us set 
$$
  u_i(s) =   \frac{\varphi_i (s)}{\sqrt{\dot \varphi_i (s)}},
  \qquad 
  v_i(s) =   \frac{1}{\sqrt{\dot \varphi_i (s)}},$$
for $i = 1,2$. A calculation shows that  $u_1,v_1$  and $u_2,v_2$  are two fundamental systems of 
solutions to the equation  (\ref{eq.w}),
where $\rho(s) = \big\{ \varphi_1 , s\big\} = \big\{ \varphi _2, s\big\}$.
Since the solutions to  that equation  form a two-dimensional vector space, we have
$$
 u_2 = Au_1 + Bv_1, \quad v_2 = Cu_1+Dv_1  \quad (AD-BC \neq 0),
$$
and therefore
$$
 \varphi_2 = \frac{u_2}{v_2} = \frac{Au_1 + Bv_1}{ Cu_1+Dv_1} = \frac{A\varphi_1 +B}{C\varphi_1 +D}.
$$
\end{proof}

\begin{example} \label{ex.sch1}
Using this Lemma, one immediately gets that the general solution to the equation
$$
     \big\{ \varphi (s), s\big\} = 2\lambda,
$$
where $\lambda \in \r$ is a constant, is given respectively by
$$
 \frac {{{A\rm e}^{\sqrt {-\lambda}\cdot s}} +{{B\rm e}^{-\sqrt {-\lambda}\cdot s}}}{C{{\rm e}^{\sqrt {-\lambda}\cdot s}}+D{{\rm e}^{-\sqrt {-\lambda}\cdot s}}},
  \quad
 \frac{A s+B}{C s +D}, \quad \mathrm{and} \quad
 {\frac {A\sin ( \sqrt {\lambda}\cdot s ) +B\cos ( \sqrt {
\lambda}s ) }{C\sin ( \sqrt {\lambda}\cdot s ) + D \cos ( \sqrt {\lambda}\cdot s ) }},
$$
depending on  $\lambda <0$,  $\lambda = 0$ or $\lambda > 0$.
\end{example}

%______________________________________________________ 

\newpage

%\printindex

\end{document}